\newcommand{\no}[1]{#1}
\renewcommand{\no}[1]{}  \newcommand{\upDelta}{\Delta} 
\renewcommand{\Delta}{\upDelta}
\date{\today}
\numberwithin{equation}{section}% 
\newtheorem{theorem}{Theorem}[section]
\newtheorem{proposition}{Proposition}[section]
\newtheorem{lemma}{Lemma}[section]
\newtheorem{definition}{Definition}[section]
\newtheorem{corollary}{Corollary}[section]
\theoremstyle{definition}
\newtheorem{remark}{Remark}[section]
\DeclareMathOperator{\Vol}{Vol}
\DeclareMathOperator{\grad}{grad}
\DeclareMathOperator{\supp}{supp}
\DeclareMathOperator{\Ker}{Ker}
\DeclareMathOperator{\Coker}{Coker}
\DeclareMathOperator{\WF}{WF}
\newcommand{\eps}{\varepsilon}
\newcommand{\R}{{\bf R}}
\newcommand{\Id}{\mbox{Id}}
\renewcommand{\r}[1]{(\ref{#1})}
\newcommand{\PDO}{$\Psi$DO}
\newcommand{\be}[1]{\begin{equation}\label{#1}}
\newcommand{\ee}{\end{equation}}
\renewcommand{\d}{\mathrm{d}}
\renewcommand{\i}{\mathrm{i}}
\newcommand{\bo}{\partial M}
\newcommand{\F}{F}
\newcommand{\B}{B}
\title[The geodesic X-ray transform with fold caustics]{The geodesic X-ray transform with fold caustics}
\author[P. Stefanov]{Plamen Stefanov}
\address{Department of Mathematics, Purdue University, West Lafayette, IN 47907}
\thanks{First author partly supported by a NSF FRG Grant DMS-0800428}
\author[G. Uhlmann]{Gunther Uhlmann}
\address{Department of Mathematics, University of Washington, Seattle, WA 98195}
\thanks{Second author partly supported by a NSF FRG grant No.~0554571 and a Walker Family Endowed Professorship}
\begin{document}

\begin{abstract}
We give a detailed microlocal study of X-ray transforms over geodesics-like families of curves with conjugate points of fold type. We show that the normal operator is the sum of a pseudodifferential operator and a Fourier integral operator. We compute the principal symbol of both operators and the canonical relation associated to the Fourier integral operator. In two dimensions, for the geodesic transform, we show that there is always a cancellation of singularities to some order, and we give an example  where that order is infinite; therefore the normal operator is not microlocally invertible in that case. In the case of three dimensions or higher if the canonical relation is a local canonical graph we show microlocal invertibility of the normal operator. 
Several examples are also studied.
\end{abstract}

\maketitle

\section{Introduction} The purpose of this paper is to study X-ray type of transforms over geodesics-like families of curves with caustics (conjugate points). We  concentrate on the most common type of caustics --- those of fold type. Let $\gamma_0$ be a fixed geodesic segment on a Riemannian manifold, and let $f$ be a function which  support does not contain the endpoints of $\gamma_0$. The question that we are trying to answer is the following: what information about the wave front set $\WF(f)$ of $f$ can be obtained from the assumption that (possibly  weighted) integrals 
\be{01}
Xf(\gamma) =\int_\gamma f\, \d s
\ee
of $f$ along all geodesics $\gamma$ close enough to $\gamma_0$ vanish (or depend smoothly on $\gamma$)? Since $X$ has a Schwartz kernel with singularities of conormal type, $Xf$ could only provide information for $\WF(f)$ near the conormal bundle $\mathcal{N}^*\gamma_0$ of $\gamma_0$. 
If there are no conjugate points along $\gamma_0$, then we know that $\WF(f)\cap \mathcal{N}^*\gamma_0=\emptyset$. This has been shown, among the other results, in \cite{FSU,SU-AJM} in this context. It also follows from the microlocal approach to Radon transforms initiated by Guillemin \cite{guillemin} when the Bolker condition (in our case that means no conjugate points) is satisfied. Then the localized normal operator $N_\chi := X^*\chi X$, where $\chi$ is a standard cut-off near $\gamma_0$ is a pseudo-differential operator (\PDO ), elliptic at conormal directions to $\gamma_0$. If there are conjugate points along $\gamma_0$, then $N_\chi$ is no longer a \PDO. One of the goals  of this work is first to study the microlocal structure of $N_\chi$ in presence of fold conjugate points, and then use it to see what singularities can be recovered.   That would also  allow us to tell whether the problem of inverting $X$ is Fredholm or not, and would help us to determine the size of the kernel, and to analyze the stability and the possible instability of this problem. 

Geodesic X-ray transforms have a long history, generalizing the Radon type X-ray transform in the Euclidean space, see, e.g., \cite{Helgason-Radon}. When the weight is constant, and $(M,g)$ is a simple manifold with boundary, uniqueness and non-sharp stability estimates have been proven in  \cite{Mu2, MuRo, BGerver}, using the energy method. Simple manifolds are compact manifolds diffeomorphic to a ball with  convex boundary  and no conjugate points. The uniqueness result has been extended to not necessarily convex manifolds under the no-conjugate points assumption in \cite{Dairbekov}. The authors used microlocal methods to prove a sharp stability estimate in \cite{SU-Duke} for simple manifolds and  uniqueness and stability estimates for more general weighted geodesic-like transforms without conjugate points in  \cite{FSU}. The X-ray transform over magnetic geodesics with the simplicity assumption was studied in \cite{St-magnetic}. Many of those and other works study integrals of tensors as well and the results for tensors of order two or higher are less complete. 

The authors considered in \cite{SU-AJM} the X-ray transform of functions and tensors  on manifolds with possible conjugate points. Using the overdeterminacy of the problem in dimensions $n\ge3$, we showed that if there exists a family of geodesics without conjugate points with a conormal bundle covering $T^*M$, then we still have generic uniqueness and stability. In dimension two however that family has to be the set of all geodesics, and even in higher dimensions, \cite{SU-AJM} does not answer the question what is the contribution of the conjugate points to $Xf$. 

\section{Formulation of the problem} \label{sec_2}
Let $(M,g)$ be an $n$-dimensional  Riemannian manifold. Let $\exp_p(v)$, where $(p,v)\in TM$, be a regular exponential map, see section~\ref{sec_RM}, where we recall the definition given by Warner in \cite{Warner_conjugate}. The main example is the exponential map of $g$ or that of another metric on $M$ or other geodesic-like curves, for example magnetic geodesics, see also \cite{St-magnetic}.  Let $\kappa$ be a smooth function  on $TM\setminus 0$. We define the weighted X-ray transform $Xf$ by
\be{1.1}
Xf(p,\theta)= \int \kappa\big(\exp_p(t\theta), \dot \exp_p(t\theta)\big) f(\exp_p(t\theta))\, \d t, \quad (p,\theta)\in% U\subset 
SM,
\ee
where we used the notation
\[
\dot\exp(tv) = \frac{\d}{\d t}\exp(tv). 
\]
The $t$ integral above is carried over the maximal interval, including $t=0$,  where $\exp(t\theta)$ is defined. The assumptions that we make below guarantee that this interval remains bounded.

Let $(p_0,v_0)\in TM$ be such that $v=v_0$ is a critical point for $\exp_{p_0}(v)$ (that we call a conjugate vector) of fold type, see the definition below. Let $q_0=\exp_{p_0}(v_0)$. 
Then our goal is to study $Xf$ for $p$ close to $p_0$ and $\theta$ close to $\theta_0 := v_0/|v_0|$ under the assumption that the support of $f$ is such that $v_0$ is the only conjugate vector $v$ at $p_0$ so that $\exp_{p_0}(v)\in\supp f$. 
Note that $v_0$ can be written in two different ways as $t\theta_0$, $|\theta_0|=1$, with $\pm t>0$, and we chose the first one.  The contribution of the second one can be easily derived from our results by replacing $\theta_0$ by $-\theta_0$.

Instead of studying $X$ directly, we study the operator
\be{1.2}
\begin{split}
Nf(p) &= \int_{S_pM}   {\kappa^\sharp}(p,\theta)Xf(p,\theta)\, \d \sigma_p(\theta)\\
& = \int_{S_pM} \int  {\kappa^\sharp}(p,\theta) \kappa\left(\exp_p(t\theta), \dot \exp_p(t\theta)\right)f(\exp_p(t\theta)) \, \d t\, \d \sigma_p(\theta),
\end{split}
\ee
with some  smooth $ {\kappa^\sharp}$ localized in a  neighborhood of $(p_0,\theta_0)$. Here $\d\sigma_p(\theta)$ is the induced Riemannian surface measure on $S_p(M)$. 
When $\exp$ is the geodesic exponential map, there is a natural way to give a structure of a manifold to all non-trapping geodesics with a natural choice of a measure, see section~\ref{sec_5}.  The operator  $X$ can be viewed as map from functions or distributions on $M$ to functions or distributions on the geodesics manifold. Then one can define the adjoint $X^*$ with respect to that measure. Then the operator $X^*X$ is of the form \r{1.2} with $ {\kappa^\sharp}=\bar\kappa$, see \r{Xstar}. The condition that $\supp {\kappa^\sharp}$ should be contained in a small enough neighborhood of $(p_0,\theta_0)$ can be easily satisfied by localizing $p$ near $p_0$, and choosing $\supp\kappa$ to be near $(\gamma_{p_0,\theta_0},\dot \gamma_{p_0,\theta_0})$. 
In the case of general regular exponential maps $N$ is not necessarily $X^*X$.

A direct calculation, see \cite{SU-Duke} and Theorem~\ref{thm_Duke}, shows that the Schwartz kernel of $X^*X$ in the geodesic case (see also \cite{FSU} for general families of curves), is singular at the diagonal, as can be expected, and that singularity defines a \PDO\ of order $-1$ similarly to the integral geometry problem for geodesics without conjugate points. We refer to section~\ref{sec_5} for more details.  Next, singularities away from the diagonal exist at pairs $(p,q)$ so that $q=\exp_p(v)$ for some $v$, and $\d_v\exp_p$ is not an isomorphism ($p$ and $q$ are conjugate points). The main goal of this paper is to study the contribution of those conjugate points to the structure of $X^*X$ and the consequences of that.  We actually study a localized version of this; for a global version on a larger open set, under the assumption that all conjugate points are of fold type, one can use a partition of unity.

Let $\mathcal{U}$ be a small enough neighborhoods of $(p_0,\theta_0)$ in $SM$. Let $U$ be a small neighborhood of $p_0$ so that $U\subset \pi(\mathcal{U})$, where $\pi$ is the natural projection on the base. Fix $ {\kappa^\sharp}\in C_0^\infty(\mathcal{U})$. 
Let $Nf$ be as in \r{1.2}, related to $ {\kappa^\sharp}$, where $\kappa$ is a smooth weight.  
We will apply $X$ to functions $f$ supported in an open set $V\ni p_0$ satisfying the conjugacy assumption of the theorem below, see Figure~\ref{fig:caustics3}.  Our  goal is to study the contribution of a single fold type of singularity. Let $\Sigma\subset M\times M$ be the conjugate locus in a neighborhood of $(p_0,q_0)$, see section~\ref{sec_RM}. Finally, let $\gamma_0 = \gamma_{p_0,\theta_0}(t)$, $t\in I$, be the geodesic through $(p_0,\theta_0)$ defined in the interval $I\ni 0$, with endpoints outside $V$.

\begin{figure}[htbp] % float placement: (h)ere, page (t)op, page (b)ottom, other (p)age
  \centering
  \includegraphics[bb=0 0 536 179,width=4.17in,height=1.39in,keepaspectratio]{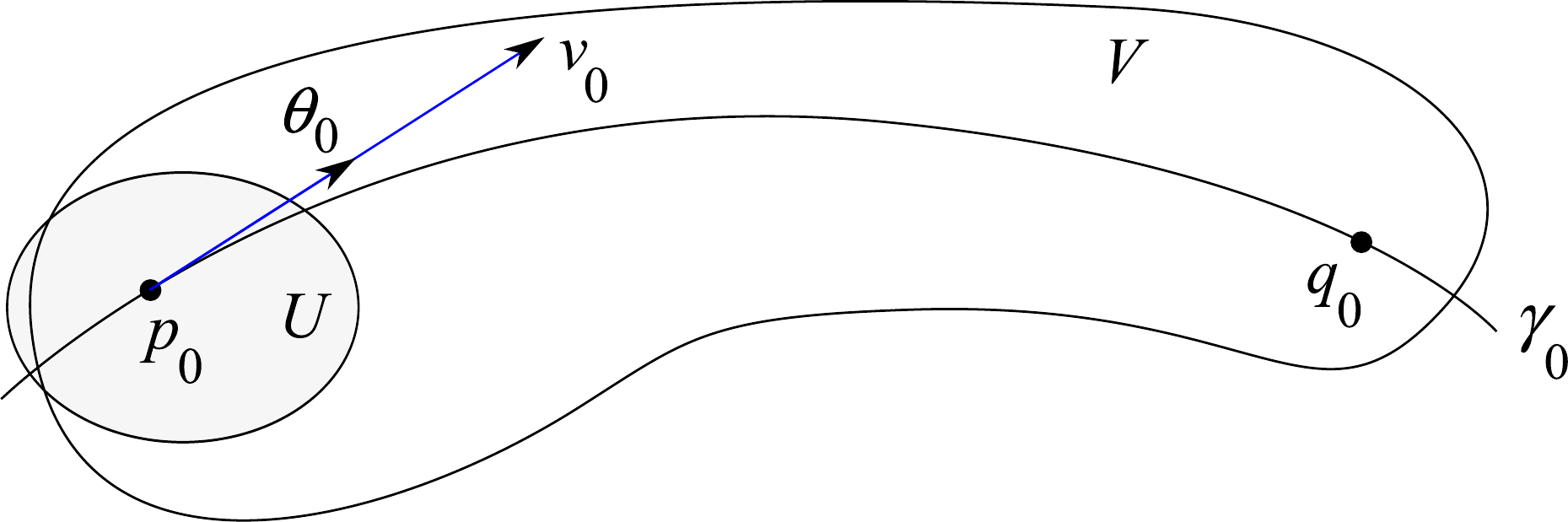}
  \caption{}
  \label{fig:caustics3}
\end{figure}

The first main result of this paper is the following.

\begin{theorem}  \label{thm_main}
Let $v_0=|v_0|\theta_0$ be a fold conjugate vector at $p_0$, and let $N$ be as in \r{1.2}.  Let 
$v_0$ be the only  singularity of $\exp_{p_0}(v)$ on the ray $\{\exp_p(t\theta_0), \; t\in I\}\cap V$. Then if \ $\mathcal{U}$ (and therefore, $U$) is  small enough, 
the operator 
\[
N : C_0^\infty(V) \longrightarrow  C_0^\infty(U)
\] 
admits the decomposition
\be{dec}
N = A +F,
\ee
where $A$ is a \PDO\ of order $-1$ with principal symbol
\be{6.1}
\sigma_p(A)(x,\xi)  = 2\pi \int_{S_xM}\delta(\xi(\theta)) \, ( {\kappa^\sharp}\kappa)(x,\theta) \,\d\sigma_x(\theta),
\ee
and $F$ is an FIO of order $-n/2$ associated to the Lagrangian $\mathcal{N}^*\Sigma$.  In particular, the canonical relation $\mathcal{C}$ of $F$ in local coordinates is given by
\be{C}
\mathcal{C} = \left\{ (p,\xi,q,\eta), (p,q)\in \Sigma, \;  \xi=-\eta_i \partial \exp_p^i(v)/\partial p, \eta\in \Coker \d_v\exp_p(v) , \; \det \d_v\exp_p(v)=0    \right\}.
\ee

If $\exp$ is the exponential map of $g$, then $\mathcal{C}$ can also be characterized as $\mathcal{N}^*\Sigma'$, where $\mathcal{N}\Sigma$ is as in \r{2.5N}, and the prime means that we replace $\eta$ by $-\eta$. 
\end{theorem}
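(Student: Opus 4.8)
The plan is to analyze the Schwartz kernel $K(p,q)$ of $N$ directly. Passing from polar coordinates $(t,\theta)$ on $\R\times S_pM$ to the linear coordinate $v=t\theta$ on $T_pM$ and inserting $f(\exp_p(v))=(2\pi)^{-n}\iint e^{\i\xi\cdot(\exp_p(v)-q)}f(q)\,\d q\,\d\xi$, one gets
\[
K(p,q)=(2\pi)^{-n}\iint e^{\i\xi\cdot(\exp_p(v)-q)}\,b(p,v)\,\d\xi\,\d v,\qquad b(p,v)=|v|^{-(n-1)}\,( {\kappa^\sharp}\kappa)\big(p,\exp_p(v),\dot\exp_p(v)\big),
\]
with $b$ smooth away from $v=0$. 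I take a partition of unity $\chi_0+\chi_1+\chi_2\equiv1$ near the relevant values of $v$, with $\chi_0$ supported near $v=0$, $\chi_1$ supported near $v=v_0$, and $\supp\chi_2$ disjoint from $v=0$ and from every conjugate vector $v$ with $\exp_{p_0}(v)\in V$; the last is possible once $\mathcal{U}$ is small, by the hypothesis singling out $v_0$. This splits $N=N_0+N_1+N_2$; on $\supp\chi_2$ the map $v\mapsto\exp_p(v)$ is a local diffeomorphism, equivalently the phase $\xi\cdot(\exp_p(v)-q)$ has no critical point in $(v,\xi)$ there, so $N_2$ has a smooth kernel. I then put $A=N_0$ and $F=N_1+N_2$.

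For $A=N_0$: near $v=0$ one has $\exp_p(v)=p+v+O(|v|^2)$, so $K$ has near the diagonal a conormal singularity of the form $|q-p|^{-(n-1)}( {\kappa^\sharp}\kappa)(p,(q-p)/|q-p|)+O(|q-p|^{2-n})$; this defines a \PDO\ of order $-1$, and the partial Fourier transform in $q-p$, restricted to $S_xM$ where only directions $\theta$ with $\xi(\theta)=0$ contribute, produces the symbol \r{6.1}. This is precisely the computation of \cite{SU-Duke}, recalled here as Theorem~\ref{thm_Duke}.

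For $F$: on $\supp\chi_1$ I use $\Phi(p,q,v,\xi)=\xi\cdot(\exp_p(v)-q)$ as a phase function with the $2n$ phase variables $(v,\xi)$. The first step is to show $\Phi$ is nondegenerate. Its critical set is $C_\Phi=\{q=\exp_p(v),\ (\d_v\exp_p(v))^{\mathsf T}\xi=0\}$; on it the differentials of the $\partial_{\xi_j}\Phi$ supply the $q$-directions, while those of the $\partial_{v_l}\Phi$ supply the $\xi$-directions in the range of $\d_v\exp_p(v)$ together with, in the single missing direction, the Hessian of $v\mapsto\xi(\exp_p(v))$ contracted against $\ker\d_v\exp_p(v)$. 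Warner's fold condition at $v_0$, recalled in section~\ref{sec_RM}, says exactly that this Hessian does not vanish there, so $\Phi$ is nondegenerate, $C_\Phi$ is a smooth manifold of dimension $2n$, and correspondingly $\Sigma$ is a smooth hypersurface. Next, from $\d_p\Phi=\xi_i\,\partial\exp_p^i(v)/\partial p$ and $\d_q\Phi=-\xi$ I form the map $\Lambda_\Phi:C_\Phi\to T^*(M\times M)$, $(p,q,v,\xi)\mapsto(p,\d_p\Phi;q,\d_q\Phi)$; pairing $(\d_p\Phi,\d_q\Phi)$ against a tangent vector of $\Sigma$ at $(p,\exp_p(v))$ obtained by differentiating $q(s)=\exp_{p(s)}(v(s))$ along the conjugate set, and using $\xi\in\Coker\d_v\exp_p(v)$, shows $\Lambda_\Phi(C_\Phi)\subset\mathcal{N}^*\Sigma$; since both sides are $2n$-dimensional and $\Lambda_\Phi$ is an immersion, $\Lambda_\Phi$ is a local diffeomorphism onto an open part of $\mathcal{N}^*\Sigma$. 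Hence $N_1$, and therefore $F=N_1+N_2$, is an FIO associated to $\mathcal{N}^*\Sigma$; the operator's canonical relation, obtained from $\Lambda_\Phi(C_\Phi)$ by flipping the sign on the source factor, is \r{C}, and the order is $0+\tfrac{2n}{4}-\tfrac{2n}{2}=-\tfrac{n}{2}$ by the standard count for an oscillatory integral with amplitude of order $0$ and $2n$ phase variables over the $2n$-dimensional manifold $M\times M$. When $\exp$ is the exponential map of $g$, comparing \r{C} with the definition \r{2.5N} of $\mathcal{N}\Sigma$, using the symmetry of the conjugacy relation and the Jacobi-field description of $\partial\exp_p(v)/\partial p$ and $\Coker\d_v\exp_p(v)$, identifies $\mathcal{C}$ with $\mathcal{N}^*\Sigma'$, the prime being the indicated sign change in $\eta$.

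I expect the crux to be the nondegeneracy of $\Phi$: converting Warner's second-order fold condition into the statement that the relevant $2n\times2n$ matrix of second derivatives of $\Phi$ has maximal rank, equivalently that $v\mapsto\exp_p(v)$ is a Whitney fold uniformly for $p$ near $p_0$ and that $\Sigma$ is a smooth embedded hypersurface. Once that is secured, the FIO structure, the canonical relation \r{C}, and the order $-n/2$ follow from the standard oscillatory-integral calculus.
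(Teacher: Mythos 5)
Your overall strategy (split off the near-diagonal part, which is the \PDO\ $A$ as in Theorem~\ref{thm_PDO}, and treat the part near $v_0$ as an oscillatory integral with phase $\xi\cdot(\exp_p(v)-q)$) is reasonable, and your computation of the critical set and of $\Lambda_\Phi(C_\Phi)\subset\mathcal{N}^*\Sigma$ points in the right direction. The genuine gap is the order $-n/2$, which is part of the statement and is not established by your argument. First, $(v,\xi)$ is not an admissible set of phase variables: $\Phi$ is homogeneous of degree one in $\xi$ only, so no ``standard count'' with $2n$ phase variables applies. Second, the count you invoke is the formula $m=\mu+K/2-\dim(M\times M)/4$ written backwards; applied correctly with $\mu=0$, $K=2n$, it would give $+n/2$, not $-n/2$ (and the same naive count applied to the piece near $v=0$ would contradict the known order $-1$ of $A$). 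More fundamentally, any count using only the number of variables cannot see the fold: in the blowdown situation of Section~\ref{sec_sphere} the same representation with the same number of variables is available, yet there the nontrivial piece has order $-1$, which differs from $-n/2$ for $n\ge3$. The order really comes from the degenerate stationary phase in $v$: integrating out $\xi$ gives $K(p,q)=(2\pi)^n\int b(p,v)\,\delta(\exp_p(v)-q)\,\d v$, the pushforward of a smooth density under a Whitney fold, and one must show that this kernel is conormal to $\Sigma$ with the one-sided square-root singularity $(z'_+)^{-1/2}\big(1+\sqrt{z'_+}\,R\big)$, whose Fourier transform \r{FTz} is a symbol of order $-1/2$ in a single fiber variable, whence $F\in I^{-n/2}(M\times M,\Sigma;\mathbf{C})$. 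That kernel computation (the Jacobian $\det\d\exp_p$ vanishing simply on $S(p)$, hence $y^n\sim(v^n)^2$, hence the square root) is Theorem~\ref{thm_kernel} together with the short section completing the proof of Theorem~\ref{thm_main}; it is exactly the step your proposal replaces by the miscounted formula.

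A second, related gap: you deduce from the nondegeneracy of $\Phi$ that ``$C_\Phi$ is a smooth manifold \dots and correspondingly $\Sigma$ is a smooth hypersurface.'' The nondegeneracy you verify follows already from Warner's condition (R2), which holds at any regular conjugate vector of multiplicity one, including blowdown of order one, where $\Sigma$ has codimension $\ge2$; so the implication as you state it is false. That $\Sigma$ is a smooth hypersurface, and that the Lagrangian is $\mathcal{N}^*\Sigma$ with canonical relation \r{C}, requires the fold condition proper --- transversality of $N_p(v)$ to $S(p)$ --- via the arguments of Lemma~\ref{lemma_2.1} and Theorem~\ref{thm_F}(b) (injectivity of $\d\pi$ on $TL$, and the computation \r{2.5a}--\r{2.8} of the conormals). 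With these two points repaired --- Theorem~\ref{thm_F} for the geometry of $\Sigma$ and $\mathcal{N}^*\Sigma$, and the kernel analysis of Theorem~\ref{thm_kernel} for the conormal type and the order --- your oscillatory-integral framing becomes consistent with the paper's proof; the remaining items (smoothness of your $N_2$, the sign conventions in \r{C}, the Riemannian characterization via \r{2.5N}, and the near-diagonal part, for which in the general regular case you should cite Theorem~\ref{thm_PDO} rather than Theorem~\ref{thm_Duke}) are minor.
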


It is easy to check that $\mathcal{C}$ above is invariantly defined. 

In section~\ref{sec_9} we show that in dimension 3 or higher in the case that  $\mathcal{C}$ is a local canonical graph the operator $N$ is microlocal invertible. In two dimensions, in the geodesic case, we show that there is always a loss of some derivatives at least when the curves are geodesics. We study in detail the case of the circular Radon transform in two dimensions in section~\ref{sec_ex}, and show that then $N$ is not microlocally invertible.

\section{Regular exponential maps and their generic singularities}  \label{sec_RM}
\subsection{Regular exponential maps} 
Let $M$ be a fixed $n$-dimensional manifold. We will recall the definition of Warner \cite{Warner_conjugate} of a regular exponential map at $p\in M$. We think of it as a generalization of the exponential map on a Riemannian manifold, by requiring only those properties that are really necessary for what follows. For that reason, we use the notation $\exp_p(v)$. In addition to  \cite{Warner_conjugate} , we will require $\exp_p(v)$ to be smooth in $p$ as well. Let $N_p(v)\subset T_vT_pM$ denote the kernel of $\d \exp_p$. Unless specifically indicated, $\d$ is the differential w.r.t.\ $v$. The radial tangent space at $v$ will be denoted by $r_v$. It can be identified with  $\{sv, \;  s\in\R\}$, where $v$ is considered as an element of $T_vT_pM$. 

\begin{definition}
A map $\exp_p(v)$ that for each $p\in M$ maps $v\ni T_pM$ into M is called a \textbf{regular exponential map}, if 
\begin{itemize}
  \item[\textbf{(R1)}] $\exp$ is smooth in both variables, except possibly at $v=0$.  Next,     $\d\exp_p(tv)/\d t \not=0$, when $v\not=0$. 
  \item[\textbf{(R2)}] The Hessian $\d^2\exp_p(v)$ maps isomorphically $r_v\times N_p(v)$ onto $T_{\exp_p(v)}M/\d \exp_p (T_vT_pM)$ for any $v\not=0$ in $T_pM$ for which $\exp_p(v)$ is defined. 
  \item[\textbf{(R3)}] For each $v\in T_pM\setminus 0$, there is a convex neighborhood $U$ of $v$ such that the number of singularities of $\exp_p$, counted with multiplicities, on the ray $tv$, $t\in \R$ in $U$, for each such ray that intersects $U$, is constant and equal to the order of $v$ as a singularity of $\exp_p$.  
\end{itemize}    
\end{definition}

An example is the exponential map on a Riemannian (or more generally on a Finsler manifold), see \cite{Warner_conjugate}. Then (R1) is clearly true. Next, (R2) follows from the following well known property. Fix $p$ and a geodesic through it. Consider all  Jacobi fields vanishing at $p$. Then at any $q$ on that geodesic, the values of those Jacobi fields that do not vanish at $q$ and the covariant derivatives of those that vanish at $q$ span $T_qM$. Also, those two spaces are orthogonal. Finally, (R3) represents the well known continuity property of the conjugate points, counted with their multiplicities that follows from the Morse Index Theorem (see, e.g.,  \cite[Thm~4.3.2]{Jost}). 

We would need also an assumption about the behavior of the exponential map at $v=0$. 
\begin{itemize}
  \item[\textbf{(R4)}]   $\exp_p(tv)$  is  smooth in $p,t,v$ for all $p\in M$, $|t|\ll1$, and $v\not=0$. Moreover,
\[  
\exp_p(0)=p, \quad \mbox{and} \quad 
\frac{\d}{\d t}\exp_p(tv)=v \quad \mbox{for $t=0$}.
\] 
\end{itemize}    
%This assumption can be relaxed to the requirement that 
Given a regular exponential map, we define the ``geodesic'' $\gamma_{p,v}(t)$, $v\not=0$, by $\gamma_{p,v}(t)=\exp_p(tv)$. We will often use the notation 
\be{qw}
q= \exp_p(v) = \gamma_{p,v}(1), \quad w= -\dot\exp_p(v) := -\dot \gamma_{p,v}(1), \quad \theta=v/|v|. 
\ee
Note that the ``geodesic flow'' does not necessarily obey the group property. We will assume that
\begin{itemize}
  \item[\textbf{(R5)}] For $q$, $w$ as in \r{qw}, we have $\exp_q(w)=p$, $\dot\exp_q(w)=-v$.
\end{itemize}    
This shows that in particular, $(p,v) \mapsto (q,w)$ is a diffeomorphism. If $\exp$ is the exponential map of a Riemannian metric, % or more generally related to a Hamiltonian flow, 
then (R5) is automatically true and  that map is actually a symplectomorphism  (on $T^*M$). 

\begin{remark}
In case of magnetic geodesics, or more general Hamiltonian flows, (R5) is equivalent to time reversibility of the ``geodesics.'' This is not true in general. On the other hand, one can define the reverse exponential map $\exp^-_q(w) = \gamma_{q,-w}(-1)$ in that case, see e.g.\ \cite{St-magnetic}, near $(q_0,w_0)$, and replace $\exp$ by $\exp^-$ in that neighborhood. Then (R5) would hold. 
In other words, (R5) really says that $(p,v) \mapsto (q,w)$ is assumed to be a local diffeomorphism with an inverse satisfying (R1) -- (R4).  
\end{remark}

\subsection{Generic properties of the conjugate locus}
We recall here the main result by Warner  \cite{Warner_conjugate} about the regular points of the conjugate locus of a fixed point $p$. The \textbf{\emph{tangent conjugate locus}} $S(p)$ of $p$  is the set of all vectors $v\in T_pM$ so that $\d\exp_p( v)$ (the differential of $\exp_p(v)$ w.r.t.\ $v$) is not an isomorphism.
We call such vectors \textbf{\emph{conjugate vectors}} at $p$ (called conjugate points in \cite{Warner_conjugate}).  The kernel of $\d\exp_p(v)$ is denoted by $N_p(v)$. It is part of $T_vT_pM$ that we identify with $T_pM$. In the Riemannian case, by the Gauss lemma, $N_p(v)$ is orthogonal to $v$. In the general case, by (R1), it is always transversal to $v$. 
The images of the conjugate vectors under the exponential map $\exp_p$ will be called \textbf{conjugate points} to $p$. The image of $S(p)$ under the exponential map $\exp_p$ will be denoted by $\Sigma(p)$ and called the \textbf{conjugate locus of $p$}. Note that $S(p)\subset T_pM$, while $\Sigma(p)\subset M$. We always work with $p$ near a fixed $p_0$ and with $v$ near a fixed $v_0$. Set $q_0=\exp_{p_0}(v_0)$. Then we are interested in $S(p)$ restricted to a small neighborhood of $v_0$, and in $\Sigma(p)$ near $q_0$. Note that $\Sigma(p)$ may not contain all points near $q_0$ conjugate to $p$ along some ``geodesic''; and may not contain even all of those along $\exp_{p_0}(tv_0)$ if the later self-intersects --- it contains only those that are of the form $\exp_p(v)$ with $v$ close enough to $v_0$. 

Normally, $\d \exp_p(v)$ stands for the differential of $\exp_p(v)$ w.r.t.\ $v$. 
When we need to take the differential w.r.t.\ $p$, we will use the notation $d_p$ for it, We write $d_v$ for the differential w.r.t.\ $v$, when we want to distinguish between the two.

We denote by $\Sigma$ the set of all conjugate pairs $(p,q)$ localized as above. In other words, $\Sigma=\{(p,q);\; q\in \Sigma(p)\}$, where $p$ runs over a small neighborhood of $p_0$. Also, we denote by $S$ the set $(p,v)$, where $v\in S(p)$. 
 
A \textbf{\emph{regular conjugate vector}}  $v$ is defined by the requirement that  there exists a neighborhood  of $v$, so that any radial ray of $T_pM$ contains at most one conjugate point there.  The regular conjugate locus then is an everywhere dense open subset of the conjugate locus that has a natural structure of an $(n-1)$-dimensional manifold. The order of a conjugate vector as a singularity of $\exp_p$ (the dimension of the kernel of the differential) is called an order of the conjugate vector. 

In \cite[Thm~3.1]{Warner_conjugate}, Warner characterized the  conjugate vectors at a fixed $p_0$ of order at least $2$, and some of those of order $1$, as described below. Note that in $\B_1$, one needs to postulate that $N_{p_0}(v)$ remains tangent to $S(p_0)$ at points $v$ close to $v_0$ as the latter is not guaranteed by just assuming that it holds at $v_0$ only. 

\smallskip  
\begin{description}
\item [($\mathbf{F}$)  Fold conjugate vectors] Let $v_0$ be a regular  conjugate vector at $p_0$, and let $N_{p_0}(v_0)$ be one-dimensional and transversal to $S(p_0)$.
Such singularities are known as fold singularities.   
Then one can find local coordinates $\xi$ near $v_0$ and $y$ near $q_0$ so that in those coordinates, $\exp_{p_0}$ is given by 
\be{2.1}
y' = \xi', \quad y^n=(\xi^n)^2.
\ee
Then 
\be{2.1n}
S(p_0)=\{\xi^n=0\}, \quad N_{p_0}(v_0)=\mbox{span}\left\{ \partial/\partial \xi^n\right\}, \quad\Sigma(p_0) = \{y^n=0\}. 
\ee
Since the fold condition is stable under small $C^\infty$ perturbations, as follows directly from the definition, those properties are preserved under a small perturbation of $p_0$. 
\smallskip  

\item[($\mathbf{B}_1$) Blowdown of order 1]   
Let $v_0$ be a regular  conjugate vector at $p_0$ and let $N_{p_0}(v)$ be one-dimen\-sional. Assume also that  $N_{p_0}(v)$ is tangent to $S(p_0)$ for all regular conjugate $v$ near $v_0$. 
We call such singularities  blowdown of order 1. 
Then locally, $\exp_{p_0}$ is represented in suitable coordinates by
\be{2.1b}
y'=\xi', \quad y^n = \xi^1\xi^n.
\ee  
Then 
\be{2.1bb}
S(p_0) = \{\xi^1=0\}, \quad N_{p_0}(v_0)=\mbox{span}\left\{ \partial/\partial \xi^n\right\},  \quad \Sigma(p_0) = \{y^1=y^n=0  \}. 
\ee
Even though we postulated that the tangency condition is stable under perturbations of $v_0$, it is not stable under a small perturbation of $p_0$, and  the type of the singularity may change then. In some symmetric cases, one can check directly that the type is locally preserved. 
\smallskip  

\item[($\mathbf{B}_k$) Blowdown of higher order] Those are regular conjugate vectors in the case where $N_{p_0}(v_0)$ is  $k$-dimensio\-nal, with $2\le k\le n-1$. Then in some coordinates,  $\exp_{p_0}$ is represented as
\be{2.1c}
\begin{split}
y^i &=\xi^i,\qquad i =1,\dots,n-k\\ 
y^i &= \xi^1\xi^i, \quad i =n-k+1,\dots,n.
\end{split}
\ee
Then 
\be{2.1cn}
\begin{split}
S(p_0) &= \{\xi^1=0\}, \quad N_{p_0}(v_0)=\mbox{span}\left\{\partial/\partial{\xi^{n-k+1}},\dots,\partial/\partial{\xi^n}\right\},\\
\Sigma(p_0) &= \{y^1=y^{n-k+1}= \dots =y^n=0  \}.
\end{split}
\ee
In particular, $N_{p_0}(v_0)$ is tangent to $S(p_0)$. This singularity is unstable under perturbations of $p_0$, as well. A typical example are  the antipodal points on $S^{n}$, $n\ge3$; then $k=n-1$. 
\end{description}

The purpose of this paper is to study the effect of fold conjugate points to $X$.

\section{Geometry of the fold conjugate locus} 
In this section, we study the geometry of the tangent conjugate locus $S(p)$, and $S$ respectively; and the conjugate locus $\Sigma(p)$ and $\Sigma$, respectively. Recall that we work locally, and everywhere below, even if not stated explicitly, $(p,v)$ belongs to a small enough neighborhood of $(p_0,v_0)$; $(q,v)$ is near  $(q_0,w_0)$. We assume throughout the section that $v_0$ is conjugate vector at $p_0$ of fold type. We also fix a non-zero covector $\eta_0$ at $q_0$  as in \r{C}, and let $\xi_0$ be the corresponding $\xi$ as in \r{C}. We will see later that $\xi_0\not=0$. We refer to Figure~\ref{fig:caustics_multidim}, where $w$ is not shown, and the zero subscripts are omitted. 

We start with properties of $S(p)$ and $S$.

\begin{lemma}\label{lemma_2.1}   \ \ 

(a) Let $v\in S(p)$ be  a fold conjugate vector. Then near $q = \exp_p(v)$, $\Sigma(p)$ is a smooth surface of codimension one, tangent to $w := -\dot\gamma_{p,v}(1)$. 

(b)  $S$ is a smooth $(2n-1)$-dimensional surface in $TM$ that can be considered as the bundle  $\{S(p),p\in M\}$ with fibers $S(p)$. 
\end{lemma}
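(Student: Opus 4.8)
The plan is to use the Warner normal form \r{2.1} for a fold conjugate vector together with the companion map $(p,v)\mapsto(q,w)$ controlled by (R5), and then to read off both statements from explicit local coordinates. First I would fix the fold conjugate vector $v\in S(p)$ (with $p$ near $p_0$, $v$ near $v_0$) and invoke the Warner theorem: there are coordinates $\xi$ centered at $v$ and $y$ centered at $q=\exp_p(v)$ in which $\exp_p$ has the form $y'=\xi'$, $y^n=(\xi^n)^2$, with $S(p)=\{\xi^n=0\}$, $N_p(v)=\mathrm{span}\{\partial/\partial\xi^n\}$ and $\Sigma(p)=\{y^n=0\}$. The last identity already says that $\Sigma(p)$ is, near $q$, a smooth surface of codimension one. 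For the tangency claim in (a), I would note that the image $\d\exp_p(T_vT_pM)$ at a conjugate vector is exactly $T_q\Sigma(p)$ (since along $S(p)$ the rank of $\d\exp_p$ drops by one and the image varies smoothly, giving the tangent space of the image surface), and then use (R1): $\d\exp_p(tv)/\d t\neq0$ means the radial direction $v$ is not in $N_p(v)$, so $\dot\gamma_{p,v}(1)=\dot\exp_p(v)=\d\exp_p(v)$ lies in the image $\d\exp_p(T_vT_pM)=T_q\Sigma(p)$; hence $w=-\dot\gamma_{p,v}(1)$ is tangent to $\Sigma(p)$ at $q$. One small point to be careful about: one must check that the image of $\d\exp_p$ at nearby conjugate vectors really is the tangent space to $\Sigma(p)$, which follows because $v$ is a \emph{regular} conjugate vector of fold type, so $S(p)$ is a smooth hypersurface on which $N_p$ is transversal, and $\exp_p|_{S(p)}$ is an immersion onto $\Sigma(p)$ by the normal form.

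For part (b), the plan is to exhibit $S$ as the zero set of a submersion. Consider the function $(p,v)\mapsto \det \d_v\exp_p(v)$, defined for $v\neq0$ in the region of interest; then $S=\{\det\d_v\exp_p=0\}$. I would show this function has nonvanishing differential along $S$ near $(p_0,v_0)$. For fixed $p$ this is exactly the statement that $v_0$ is a regular fold conjugate vector, i.e.\ $S(p)$ is a smooth hypersurface in $T_pM$ cut out transversally by $\det\d_v\exp_p(v)=0$ — this is built into the Warner normal form $S(p)=\{\xi^n=0\}$, so the $v$-gradient of the determinant is already nonzero. Since a submersion in the $v$-variables alone is a submersion in $(p,v)$, the full differential is nonzero, so $S=\{\det\d_v\exp_p=0\}$ is a smooth codimension-one submanifold of (the relevant open subset of) $TM$, hence of dimension $2n-1$. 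That $S$ fibers over $M$ with fiber $S(p)$ is then immediate from the definition $S=\{(p,v):v\in S(p)\}$ together with the fact that each fiber $S(p)$ is a smooth $(n-1)$-manifold, the fibration being realized by the projection $\pi:TM\to M$ restricted to $S$; since $\dim S-\dim M=n-1=\dim S(p)$ and the fibers are smooth, $\pi|_S$ is a submersion and $S$ is the claimed bundle.

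The main obstacle, and the place where care is needed, is justifying the tangency statement in (a) uniformly — i.e.\ that the codimension-one surface $\Sigma(p)$ produced by the normal form at a \emph{single} conjugate vector $v$ is the same surface for which we claim $w$ is tangent, and that ``$\Sigma(p)$ near $q$'' is genuinely a manifold rather than merely the image of a critical set. This requires knowing that for fold points $\exp_p$ restricted to $S(p)$ is an embedding (locally), which is exactly the content of the normal form $y'=\xi'$, $y^n=0$ on $\{\xi^n=0\}$: the map $\xi'\mapsto(\xi',0)$ is clearly an embedding. Once that is in hand, $T_q\Sigma(p)=\mathrm{d}(\exp_p|_{S(p)})(T_vS(p))$, which in the normal-form coordinates is $\{y^n=0\}$, i.e.\ precisely $\d\exp_p(T_vT_pM)$ (the image of the full differential, since $N_p(v)\subset T_vS(p)$ forces the image of $\d\exp_p$ and of its restriction to $S(p)$ to coincide). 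Everything else is bookkeeping with the normal form.
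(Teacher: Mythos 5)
Your proof is correct and follows essentially the same route as the paper: part (a) is read off from the Warner normal form \r{2.1} (giving $\Sigma(p)=\{y^n=0\}$ and that $\d\exp_p$ sends every vector, in particular the radial one, into $T_q\Sigma(p)$), and part (b) uses that $S$ is the zero set of $\det\d_v\exp_p(v)$, whose $v$-differential is nonvanishing by the fold condition. The extra care you take with $T_q\Sigma(p)=\d\exp_p(T_vT_pM)$ and with (R1) guaranteeing $w\neq0$ only makes explicit what the paper leaves implicit.
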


\begin{figure}[h] % float placement: (h)ere, page (t)op, page (b)ottom, other (p)age
  \centering
  \includegraphics[bb=0 0 751 371,width=5.54in,height=2.74in,keepaspectratio]{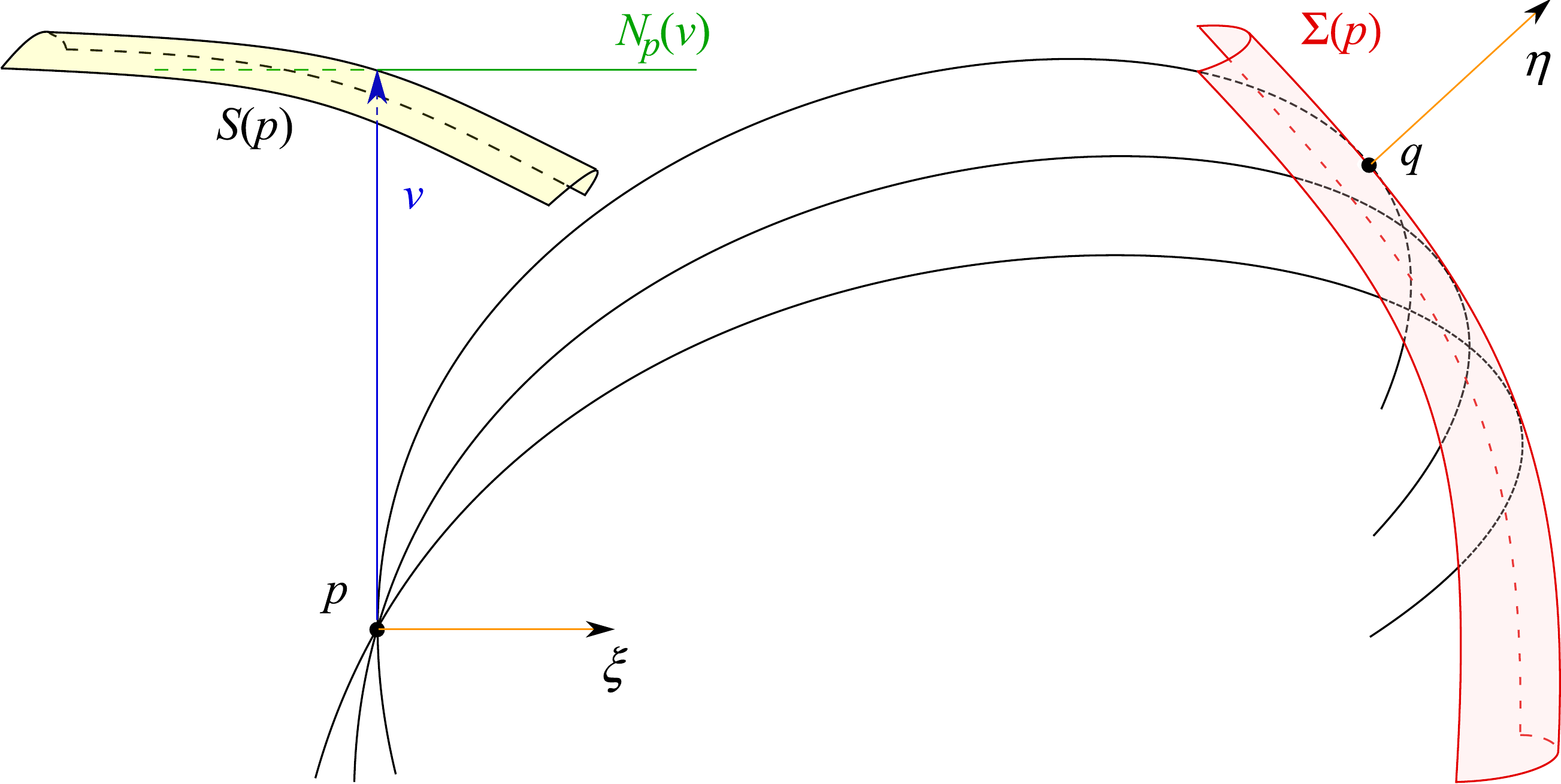}
  \caption{A typical fold conjugate locus}
  \label{fig:caustics_multidim}
\end{figure}

\begin{proof}
Consider (a) first. 
The representation \r{2.1} implies that locally,  $\Sigma(p) =\exp_p(S(p))$ is a smooth surface of codimension one (given by $y^n=0$). Next, for $v\in S(p)$, the differential $\d\exp_p$ sends any vector to a vector tangent to $S(p)$, as it follows from \r{2.1} again. In particular, this is true for the radial vector $v$ (considered as a  vector in   $T_vT_pM$). This proves that $w$ is tangent to $\Sigma(p)$.

The statement (b) follows from the fact that $S$ is defined by $\det \d \exp_p(v)=0$, and  that $\det \d \exp_p(v)$ has a non-vanishing differential w.r.t.\ $v$.  
\end{proof}

\begin{remark} It is easy to show that in (a),  $\gamma_{p,v}$ is tangent to $\Sigma(p)$ of order $1$ only.
\end{remark}

We  define ``Jacobi fields'' along $\gamma_{p,v}$ vanishing at $p$ as follows. For any $\alpha\in T_vT_pM$, set 
\[
J(t) = \d[\exp_p(tv)](\alpha) = \alpha^k\frac{\partial}{\partial v^k }\exp_p(tv).
\]
Then $J(0)=0$, $\dot J(0)=\alpha$, where $\dot J(T)= \d J(t)/\d t$. If $J(1)=0$, then a direct computation shows that 
\be{JJ}
\dot J(1) = 
\d^2\exp_p(v) (\alpha\times v). 
\ee

When $\exp$ is the exponential map of a Riemannian metric, it is natural to work with the covariant derivative $D_tJ(t)= :J'(1)$ instead of $\dot J(t)$. While they are different in general, they coincide at points where $J(t)=0$. 

The next lemma shows that the fold/blowdown conditions are symmetric w.r.t.\ $p$ and $q$. 

\begin{lemma}  \label{lemma_2.2} 
The vector $v_0$ is a conjugate vector at $p_0$ of fold type, if and only if   $w_0$ is a conjugate vector at $q_0$ of fold type. 
\end{lemma}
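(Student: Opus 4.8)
The plan is to exploit the symmetry encoded in assumption (R5), which says that the map $(p,v)\mapsto(q,w)$ (where $q=\exp_p(v)$, $w=-\dot\exp_p(v)$) is a diffeomorphism with $\exp_q(w)=p$ and $\dot\exp_q(w)=-v$. The fold condition at $p_0$ involves three pieces of data: (i) $v_0$ is a regular conjugate vector, i.e.\ near $v_0$ the tangent conjugate locus $S(p_0)$ is a smooth hypersurface in $T_{p_0}M$; (ii) $N_{p_0}(v_0)=\Ker\,\d_v\exp_{p_0}(v_0)$ is one-dimensional; (iii) $N_{p_0}(v_0)$ is transversal to $S(p_0)$. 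I must show each of these holds at $p_0$ exactly when its counterpart holds at $q_0$ for the reverse exponential map $\exp_q(\cdot)$ at $w_0$, and by the symmetry of (R5) it suffices to prove one implication.

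The key algebraic input is a linearization of (R5). Differentiating the identity $\exp_q(w)=p$ (with $q,w$ viewed as functions of $(p,v)$) and $\dot\exp_q(w)=-v$ gives linear relations between $\d_v\exp_{p_0}(v_0)$, $d_p\exp_{p_0}(v_0)$, the Hessian $\d^2\exp_{p_0}(v_0)$, and the corresponding objects at $(q_0,w_0)$. Concretely, I expect to extract from this the statement that $\d_v\exp_{q_0}(w_0)$ and $\d_v\exp_{p_0}(v_0)$ have the same rank — hence $\det\d_v\exp_{q_0}(w_0)=0$ iff $\det\d_v\exp_{p_0}(v_0)=0$, and the corank (the dimension of $N$) is the same, giving the equivalence of (ii). This is essentially the symmetry of the conjugacy relation $\Sigma$ already used implicitly in Lemma~3.1 and Theorem~\ref{thm_main}, now made precise. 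For (i), regularity of $v_0$ is the condition that $S(p_0)$ is smooth near $v_0$, equivalently that $\det\d_v\exp_{p_0}(\cdot)$ has nonvanishing differential along $S(p_0)$; since $(p,v)\mapsto(q,w)$ is a diffeomorphism it carries $S$ (defined by $\det\d_v\exp_p(v)=0$, which by Lemma~3.1(b) is a smooth hypersurface in $TM$ with nonvanishing differential) to the analogous set for $\exp^-$, so regularity transfers. Restricting to the fixed base point translates this into regularity of $w_0$ as a conjugate vector of $\exp_{q_0}$.

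The remaining and most delicate point is the transversality condition (iii): $N_{p_0}(v_0)$ transversal to $S(p_0)$ iff $N_{q_0}(w_0)$ transversal to $S(q_0)$. Here I would use the normal form \r{2.1}: if $v_0$ is a fold vector at $p_0$ then in suitable coordinates $\exp_{p_0}$ is the map $(\xi',\xi^n)\mapsto(\xi',(\xi^n)^2)$, and the kernel direction $\partial/\partial\xi^n$ is transversal to $S(p_0)=\{\xi^n=0\}$ precisely because the quadratic term $(\xi^n)^2$ is nondegenerate in the normal direction — equivalently, (R2) (the Hessian maps $r_v\times N_p(v)$ isomorphically onto the cokernel of $\d\exp_p$) forces the second-order vanishing to be nondegenerate along $N_{p_0}(v_0)$, which is exactly the fold (as opposed to blowdown) alternative. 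So I would rephrase "fold" as: one-dimensional kernel plus the Hessian-nondegeneracy condition along that kernel, then show that nondegeneracy is manifestly symmetric under $(p,v)\leftrightarrow(q,w)$ using the linearized (R5) identity from the previous paragraph, since $\d^2\exp_{q_0}(w_0)$ restricted to $r_{w_0}\times N_{q_0}(w_0)$ is expressed through $\d^2\exp_{p_0}(v_0)$ restricted to $r_{v_0}\times N_{p_0}(v_0)$ composed with isomorphisms.

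I expect the main obstacle to be bookkeeping: carefully setting up the chain-rule differentiation of the two identities in (R5), keeping track of which differentials are w.r.t.\ $p$ versus $v$ (the paper's $d_p$ vs.\ $d_v$ convention), and isolating from the resulting system the clean rank/corank and Hessian-nondegeneracy statements without drowning in indices. A clean way to organize this is to pass to the cotangent picture, where by (R5) in the Riemannian case $(p,v)\mapsto(q,w)$ is a symplectomorphism (and in general still a diffeomorphism), and to recall that the conjugate locus $S$ is the critical set of the Lagrangian projection; symmetry of folds for Lagrangian maps under inversion of the underlying symplectomorphism is then the geometric content. I would present the proof in the manifold language for brevity, invoking the normal form \r{2.1} and (R2), and relegating the explicit linearized-(R5) computation to a short displayed identity.
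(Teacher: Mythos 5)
Your first step---differentiating the identities in (R5) in a kernel direction to see that $\d_v\exp_{q_0}(w_0)$ and $\d_v\exp_{p_0}(v_0)$ have the same corank---is essentially what the paper does: differentiating $p=\exp_q(w)$ in a direction $\alpha\in N_{p_0}(v_0)$ gives $\d\exp_{q_0}(w_0)\dot J(1)=0$, and (R2), via \r{JJ}, guarantees $\dot J(1)\not=0$, so $\dot J(0)\mapsto\dot J(1)$ is an isomorphism $N_{p_0}(v_0)\to N_{q_0}(w_0)$. But two points of your outline do not work as stated. The lesser one is regularity of $w_0$: the diffeomorphism $(p,v)\mapsto(q,w)$ does not send radial rays in $T_pM$ to radial rays in $T_qM$ (scaling $v$ moves the base point $q$ along the geodesic), and the slice of the smooth hypersurface $S\subset TM$ by the fiber $T_{q_0}M$ is not automatically a regular conjugate locus in Warner's sense; the paper obtains regularity of $w_0$ from (R3) (local constancy of the number of singularities on radial rays, counted with multiplicity), which your sketch never invokes.

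The substantive gap is in the fold-versus-blowdown step, where you conflate two different restrictions of the Hessian. (R2) concerns $\d^2\exp_p(v)$ on $r_v\times N_p(v)$ and is part of the definition of a regular exponential map; it holds at blowdown vectors just as well (Warner's $\B_1$ classification is derived under (R2)), so (R2) cannot ``force the second-order vanishing to be nondegenerate along $N_{p_0}(v_0)$.'' The fold condition, i.e.\ transversality of $N_{p_0}(v_0)$ to $S(p_0)$, is instead equivalent to the nonvanishing in $\Coker\d\exp_{p_0}(v_0)$ of $\d^2\exp_{p_0}(v_0)(\alpha,\alpha)$ for $0\not=\alpha\in N_{p_0}(v_0)$ (by Jacobi's formula, since at a corank-one point the adjugate of $\d\exp_{p_0}(v_0)$ is kernel tensor cokernel); this is the $N\times N$ entry of the Hessian, which is exactly the one that vanishes in the $\B_1$ normal form \r{2.1b}. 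Consequently your proposed symmetry statement---that $\d^2\exp_{q_0}(w_0)$ on $r_{w_0}\times N_{q_0}(w_0)$ is $\d^2\exp_{p_0}(v_0)$ on $r_{v_0}\times N_{p_0}(v_0)$ composed with isomorphisms---is true but vacuous for the purpose: both restrictions are isomorphisms by (R2) whether or not the singularity is a fold, so it cannot distinguish $\F$ from $\B_1$. To salvage your route you would need to relate the $N\times N$ intrinsic Hessians at the two ends, which requires differentiating (R5) to second order and controlling the second derivatives of the transition map $(p,v)\mapsto(q,w)$---precisely the computation the proposal leaves undone. The paper avoids Hessians here altogether: once $w_0$ is known to be a regular conjugate vector of order one, it assumes the $\B_1$ alternative, notes that the normal form \r{2.1b} produces a nontrivial one-parameter family $w(s)$ with $\exp_{q_0}(w(s))=p_0$, transports it by (R5) to a family $v(s)$ with $\exp_{p_0}(v(s))=q_0$, and contradicts the at-most-two-to-one behavior of the fold normal form \r{2.1} at $p_0$. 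Adopting that counting argument is the simplest way to close the gap in your sketch.
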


\begin{proof}
Set $w_0=-\dot\gamma_{p_0,v_0}(1)$, as in \r{qw}. Then $p_0=\exp_{q_0}(w_0)$. Assume now that $\alpha\in N_{p_0}(v_0)$. In some local coordinates,  differentiate $p=\exp_q(w)$ w.r.t.\ $v$ in the direction of $\alpha$; here $q$, $w$ are viewed as functions of $p$, $v$. Then, using the Jacobi field notation introduced above in \r{JJ},  we get
\[
0=\d\exp_{q_0}(w_0)\left(\alpha^k \frac{\partial w}{\partial v^k}(p_0,v_0)\right)= \d\exp_{q_0}(w_0)\dot J(1)
\]
because
\[
\alpha^k \frac{\partial w}{\partial v^k}(p_0,v_0) =
\alpha^k \frac{\partial }{\partial v^k} \frac{\d}{\d t}\Big|_{t=1} \exp_p(tv)       (p_0,v_0) 
= \dot J(1).
\]
By (R2), $\dot J(1)\not=0$, so in particular, this shows that $w_0$ is conjugate at $q_0$, and $\dot J(1)\in N_{q_0}(w_0)$. 
Moreover, by (R2), the linear map 
\be{2.4}
N_p(v)\ni \alpha = \dot J(0) \mapsto \dot J(1) :=\beta\in N_q(w), \quad J(0)=J(1)=0
\ee
defines an isomorphism between $N_p(v)$ and $N_q(w)$. 
 Then \r{2.4} shows that $w_0$ is conjugate at $q_0$ of  multiplicity one. By (R3), applied to $w_0$, it is also regular. 

We will prove now that $w_0$ is of fold type. Since it is regular and of multiplicity one, $S(q_0)$ near $w_0$ is a smooth $(n-1)$ dimensional surface either of  type $\F$, as in \r{2.1n} or of   type $\B_1$, as in \r{2.1bb}. Assume the latter case first, then $\Sigma(q_0)$ is of codimension two, as follows from \r{2.1bb}. In particular, using the normal form \r{2.1b}, we see that in this case, one can find a non-trivial one-parameter family of vectors $w(s)$ so that $w(0)=w_0$ and $\exp_{q_0}(w(s))=p_0$. Then the corresponding tangent vectors at $p_0$ would form a non-trivial one-parameter family of vectors $v(s)$ so that $\exp_{p_0}(v(s))=q_0$. That cannot happen, if $v_0$ is of type $\F$, see \r{2.1}, since the equation $\exp_{p_0}(v)=q_0$ has (near $v_0$) at most two solutions. 
\end{proof}

For $(p,v)\in S$, let $\alpha=\alpha(p,v)\in N_p(v)$ be a unit vector.  To fix the direction, assume that the derivative of $\det \,\d \exp_p(v)$  in the direction of $\alpha$, for $v$ a conjugate vector,  is positive. 
 Here we identify in $T_vT_pM $ and $T_pM$. In the fold case, $N_p(v)$ is clearly a smooth vector bundle on $TM$ near $(p_0,v_0)$, and  $\alpha$ is a smooth vector field. 

\begin{lemma} \label{lemma_graph}
For any fixed $p$ near $p_0$, the map 
\be{LG}
S(p)\ni v\mapsto \alpha(p,v)\in N_p(v)
\ee
is a local diffeomorphism, smoothly depending on $p$ if and only if
\be{cond}
\d^2\exp_{p_0}(v_0) \left( N_{p_0}(v_0)\setminus 0\times\ \cdot\  \right)\big|_{T_{v_0}S(p_0)}   \quad \mbox{is of full rank}.
\ee
\end{lemma}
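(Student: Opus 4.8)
The plan is to reduce the claim to a computation in the fold normal form. First I would use the representation \r{2.1}: fix coordinates $\xi$ near $v_0$ and $y$ near $q_0$ in which $\exp_{p_0}$ is $(\xi',(\xi^n)^2)$, so that $S(p_0)=\{\xi^n=0\}$ and $N_{p_0}(v_0)=\mathrm{span}\{\partial/\partial\xi^n\}$. Parametrize $S(p_0)$ by $\xi'$; then $\alpha(p_0,\xi')$ is, up to a nonvanishing scalar normalization, the section $\partial/\partial\xi^n$ pushed around by the identification $T_vT_pM\cong T_pM$ and by the choice of unit length and sign. The first step is therefore to check that the map \r{LG} being a local diffeomorphism is, after these identifications, the statement that $\xi'\mapsto \alpha(p_0,\xi')\in N_{p_0}(\xi',0)$ has invertible differential, i.e.\ that the $(n-1)$ tangential derivatives $\partial_{\xi^j}\alpha$, $j=1,\dots,n-1$, together with $\alpha$ itself span $T_{v_0}T_{p_0}M$ — equivalently, since $\alpha$ already spans $N_{p_0}(v_0)$ transversally to $T_{v_0}S(p_0)$, that the projections of $\partial_{\xi^j}\alpha$ to $T_{v_0}T_{p_0}M/N_{p_0}(v_0)$ span an $(n-1)$-dimensional space, i.e.\ are a basis of $T_{v_0}S(p_0)$ modulo $N_{p_0}(v_0)$.

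The second step is to identify those tangential derivatives of $\alpha$ with the bilinear form in \r{cond}. The direction field $\alpha$ is characterized (up to scaling) by $\d\exp_p(v)\,\alpha(p,v)=0$ on $S$. Differentiating this identity along $S(p_0)$ in a tangential direction $T\in T_{v_0}S(p_0)$ gives $\d^2\exp_{p_0}(v_0)(T\times\alpha)+\d\exp_{p_0}(v_0)(\partial_T\alpha)=0$, so that $\partial_T\alpha$ modulo $N_{p_0}(v_0)$ — more precisely its image under $\d\exp_{p_0}(v_0)$ — equals $-\d^2\exp_{p_0}(v_0)(T\times\alpha)$. Since $\d\exp_{p_0}(v_0)$ has kernel exactly $N_{p_0}(v_0)$, the map $T\mapsto \partial_T\alpha$ is injective modulo $N_{p_0}(v_0)$ precisely when $T\mapsto \d^2\exp_{p_0}(v_0)(\alpha\times T)$ is injective on $T_{v_0}S(p_0)$; and this is the full-rank condition \r{cond}, because $N_{p_0}(v_0)\setminus 0$ is just the line through $\alpha$ and \r{cond} asks that $\d^2\exp_{p_0}(v_0)(\alpha\times\cdot)$ restricted to $T_{v_0}S(p_0)$ be of full rank $n-1$ (its target, $T_{q_0}M/\d\exp_{p_0}(T_{v_0}T_{p_0}M)$ together with the direction $w$, has the right dimension by Lemma~\ref{lemma_2.1}). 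I would also note the consistency with (R2): (R2) already gives that $\d^2\exp_{p_0}(v_0)$ maps $r_{v_0}\times N_{p_0}(v_0)$ isomorphically onto $T_{q_0}M/\d\exp_{p_0}(T_{v_0}T_{p_0}M)$, which is the one extra dimension (the $w$-direction) beyond $T_{v_0}S(p_0)$; condition \r{cond} is the transverse complementary statement that the $N\times(\text{tangential})$ block is also nondegenerate.

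The third step is to handle the $p$-dependence. Because the fold condition is stable under small $C^\infty$ perturbations of $p_0$ (as recorded after \r{2.1n}), $S(p)$, $N_p(v)$ and the normalized field $\alpha(p,v)$ all depend smoothly on $p$, and the normal form \r{2.1} can be chosen to depend smoothly on $p$ as well; hence the rank of the differential of \r{LG} is lower semicontinuous and the equivalence, once established at $p_0$, propagates to nearby $p$ — so "smoothly depending on $p$" in the statement needs only that the family of local diffeomorphisms is smooth, which is automatic. The main obstacle I expect is bookkeeping with the three identifications that are suppressed in the statement: $T_vT_pM\cong T_pM$, the quotient $T_{q_0}M/\d\exp_{p_0}(T_{v_0}T_{p_0}M)$ versus the line $\R w$, and the normalization (unit length plus sign) of $\alpha$, which contributes only a nonvanishing scalar factor and so does not affect full-rankness but must be tracked to make the differentiation of $\d\exp_p(v)\,\alpha=0$ legitimate. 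Once those identifications are pinned down, the argument is the short linear-algebra computation above.
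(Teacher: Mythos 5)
Your underlying idea in the second step --- differentiate the identity $\d\exp_{p_0}(v)\,\alpha(p_0,v)=0$ along $S(p_0)$ and use that $\Ker \d\exp_{p_0}(v_0)=N_{p_0}(v_0)$ while $\partial_T\alpha\perp\alpha_0$ --- is sound, but the decision to run the computation in the fold normal form \r{2.1} is a step that fails, and it is not the harmless bookkeeping you defer at the end. In the coordinates of \r{2.1} we have $\exp_{p_0}(\xi)=(\xi',(\xi^n)^2)$, so the only nonvanishing second derivative is $\partial_n\partial_n\exp^n$, and hence $\d^2\exp_{p_0}(v_0)\big(\partial/\partial\xi^n, T\big)=0$ for every $T\in T_{v_0}S(p_0)$: computed in those coordinates, the map in \r{cond} has rank zero identically, whereas \r{LG} can perfectly well be a local diffeomorphism (it is, for instance, in the examples of Sections~\ref{sec_circle} and~\ref{sec_mag}). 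The point is that the Hessian with one argument in $N_{p_0}(v_0)$ is unaffected by coordinate changes near $q_0$ (since $\d\exp_{p_0}(v_0)$ annihilates that argument), but under a nonlinear change of the $v$-variable it changes by a term of the form $\d\exp_{p_0}(v_0)(\cdot)$, i.e.\ by a linear map into $\mathrm{Im}\,\d\exp_{p_0}(v_0)$ --- and that image is exactly where the values $\d^2\exp_{p_0}(v_0)(\alpha_0,T)$, $T\in T_{v_0}S(p_0)$, live. So the rank in \r{cond} only has the intended meaning when $\d^2\exp_{p_0}$ is taken with respect to the linear structure of $T_{p_0}M$ (this is how the adapted coordinates $v^i$ in the paper's proof are to be understood), and it is destroyed by the curvilinear coordinates needed to achieve \r{2.1}. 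Concretely, in the normal form the components of $\alpha$ are constant along $S(p_0)$, so your differentiated identity reads $0+0=0$; all of the information about the differential of \r{LG} has migrated into the $v$-dependence of the identification $T_vT_{p_0}M\cong T_{p_0}M$ expressed in the curvilinear frame, which is precisely what you set aside. Taken literally, your setup would ``prove'' that \r{LG} is never a local diffeomorphism.

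The repair is to drop the normal-form reduction altogether: fix a basis of $T_{p_0}M$, regard $\d\exp_{p_0}(v)$ as a matrix-valued function of $v$ and $\alpha(p_0,v)$ as a $T_{p_0}M$-valued unit field on $S(p_0)$, and differentiate $\d\exp_{p_0}(v)\alpha(p_0,v)=0$ along a curve in $S(p_0)$. Then $\d\exp_{p_0}(v_0)(\partial_T\alpha)=-\d^2\exp_{p_0}(v_0)(\alpha_0,T)$ is legitimate, $\partial_T\alpha\perp\alpha_0$ because $|\alpha|\equiv1$, and since $\R\alpha_0=\Ker\d\exp_{p_0}(v_0)$ is transversal to $\alpha_0^\perp$, injectivity of $T\mapsto\partial_T\alpha$ (the differential of \r{LG}) is equivalent to \r{cond}; smooth dependence on $p$ then follows from smoothness of the data, as in your third step. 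Carried out this way your argument is correct and is a somewhat more direct route than the paper's, which instead applies the implicit function theorem to $\alpha^i\partial_{v^i}\exp_{p}(v)=0$, solving for $v$ as a function of $(p,\alpha)$, and then shows that the relevant Jacobian $\det(F_{1n},\dots,F_{nn})(v_0)$ is nonzero exactly under \r{cond}, by differentiating $\det(\partial F)=0$ along $S(p_0)$ and using the fold condition --- the same first-order information as your differentiated identity, organized as linear algebra on the columns $F_1,\dots,F_{n-1},F_{n\alpha},F_{nn}$.
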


\begin{proof} In local coordinates, we want to find a condition so that the equation 
\[
\alpha^i\partial_{v^i} \exp_{p}(v) =0
\]
can be solved for $v$ so that $v=v_0$ for $(p,\alpha)=(p_0,\alpha_0)$, where  $\alpha_0=\alpha(p_0,v_0)$. Then $v$ would automatically be in $S(p)$. By the implicit function theorem, this is equivalent  to  
\[
\det\left(  \partial_v  \alpha^i_0\partial_{v^i} \exp_{p_0}(v) \right) \not=0 \quad  \mbox{at $v=v_0$}.
\] 
Choose a coordinate system near $v_0$ so that $\partial/\partial v^n$  spans $N_{p_0}(v_0)$, and $\{\partial/\partial v^1, \dots \partial/\partial v^{n-1}\}$ span $T_{v_0}S(p_0)$. Denote $F(v)= \exp_{p_0}(v)$ and denote by $F_i$, $F_{ij}$ the corresponding partial derivatives. Greek indices below run from $1$ to $n-1$. We have
\begin{align} \label{2.8.1}
\partial_n F(v_0) &= 0, &&\qquad \mbox{because $   \partial/\partial v^n   \in N_{p_0}(v_0)$},\\ \label{2.8.2}
 \partial_\alpha \det (\partial F)(v_0)&=0,& &\qquad \mbox{because $   \partial/\partial v^\alpha$ is tangent to $S(p_0)$ at $v_0$},\\   \label{2.8.3}
 \partial_n    \det (\partial F)(v_0)  &\not = 0,  &&\qquad \mbox{by the fold condition},\\ \label{2.8.4}
c^\alpha \partial_\alpha F(v_0)  &\not=0, \quad \forall c\not=0, 
&&\qquad \mbox{because $   c^\alpha\partial/\partial v^\alpha\not\in N_{p_0}(v_0)$.}
\end{align}
We want to prove that $\det (\partial_n \partial F)(v_0)\not=0$ if and only if \r{cond} holds. That determinant equals 
\be{det}
\det(F_{1n}, F_{2n},\dots, F_{nn})(v_0).
\ee
Perform the differentiation in \r{2.8.2}. By \r{2.8.1}, \r{2.8.4},
\[
\det(F_{1}, \dots,F_{n-1}, F_{n\alpha})(v_0)=0, \quad \forall \alpha  \quad \Longrightarrow \quad F_{n\alpha}(v_0)\in \mbox{span}(F_1(v_0),\dots, F_{n-1}(v_0)).
\]
Similarly, \r{2.8.3} implies
\be{2.8.6}
\det(F_{1}, \dots,F_{n-1},  F_{nn})(v_0)\not=0\quad \Longrightarrow \quad 0\not=F_{nn}(v_0)\not\in \mbox{span}(F_1(v_0),\dots, F_{n-1}(v_0)).
\ee
Those two relations show that \eqref{det} vanishes if and only if $(F_{n1}(v_0),\dots F_{n, n-1}(v_0))$ form a linearly dependent system, that is equivalent to \eqref{cond}.
\end{proof}

We study the structure of the conjugate locus $\Sigma(p)$, $\Sigma(q)$ and $\Sigma$ next. Recall again that we work locally near $p_0$, $v_0$ and $q_0$. 

\begin{theorem}  \label{thm_F}  Let $v_0$ be a fold conjugate   vector at $p_0$.  

(a) Then for any $p$ near $p_0$, $\Sigma(p)$ is a smooth hypersurface of dimension $n-1$ smoothly depending on~$p$. Moreover for any $q=\exp_p(v) \in\Sigma(p)$, $T_qM$ is a direct sum of the linearly independent  spaces
\be{tq}
T_qM=   T_q\Sigma(p)\oplus N_q(w),
\ee
and
\[
T_q\Sigma(p) = \mbox{\rm Im}\; \d \exp_p(v), \quad N_q^*\Sigma(p) = \Coker \d_v\exp_p(v).
\]
Next, those statements remain true with  $p$ and $q$  swapped.

(b) 
$\Sigma$ is a smooth $(2n-1)$-dimensional hypersurface in $M\times M$ near $(p_0,q_0)$, that is also a fiber bundle  $\Sigma =\{\Sigma(p),\; p\in M\}$ with fibers $\Sigma(p)$ (and also $\Sigma=  \{\Sigma(q),\; q\in M\}$).  
Moreover, the conormal bundle $\mathcal{N}^*\Sigma$ is given by
\be{2.5cn}
\begin{split}
\mathcal{N}^*\Sigma = &\big\{ (p,q,\xi,\eta);\; (p,q)\in \Sigma, \,  \xi=\eta_i \partial \exp_p^i(v)/\partial p, \eta\in \Coker \d_v\exp_p(v) \\
&\quad \mbox{where $v=\exp_p^{-1}(q)$ with $\exp_p$ restricted to $S(p)$        }  \big\}.
\end{split}
\ee
\end{theorem}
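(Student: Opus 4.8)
The plan is to build everything on the normal form \r{2.1} for a fold, together with the $p$--$q$ symmetry already established in Lemma~\ref{lemma_2.2} and the computation \r{2.4} identifying $N_p(v)$ with $N_q(w)$ via the Jacobi field map. First I would prove part (a). Fix $p$ near $p_0$. Since $v_0$ is a fold and folds are stable under small $C^\infty$ perturbations (as noted after \r{2.1n}), the vector $v(p)\in S(p)$ with $\exp_p(v(p))=q$ near $q_0$ varies smoothly, and by \r{2.1} we get $\Sigma(p)=\exp_p(S(p))$ is a smooth hypersurface $\{y^n=0\}$, smoothly depending on $p$ (one checks the parameter $p$ enters smoothly because $\exp$ is jointly smooth by (R1), and the Malgrange/Morin preparation producing \r{2.1} can be done with parameters). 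From \r{2.1} it is immediate that $\mathrm{Im}\,\d_v\exp_p(v)$ is exactly the tangent space $\{dy^n=0\}=T_q\Sigma(p)$, hence has dimension $n-1$, and therefore $\Coker \d_v\exp_p(v)=N_q^*\Sigma(p)$, the one-dimensional conormal. For the splitting \r{tq} I would invoke Lemma~\ref{lemma_2.2}: $w$ is a fold conjugate vector at $q$, so the \emph{same} argument applied at $q$ shows $T_q\Sigma(q)=\mathrm{Im}\,\d_w\exp_q(w)$. The key point is that $w\in T_q\Sigma(p)$ (this is exactly Lemma~\ref{lemma_2.1}(a)), while the nonzero vector spanning $N_q(w)$ is transversal to $\Sigma(p)$: if it were tangent to $\Sigma(p)$ one could, using the $\B_1$ normal form \r{2.1b}, produce a two-parameter solution family, contradicting that the fold equation $\exp_p(v)=q$ has at most two solutions — the same contradiction already used inside the proof of Lemma~\ref{lemma_2.2}. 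Dimension count $ (n-1)+1=n$ then gives the direct sum \r{tq}. Swapping $p\leftrightarrow q$ is automatic by Lemma~\ref{lemma_2.2}.

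Next, part (b). The set $\Sigma=\{(p,q):q\in\Sigma(p)\}$ is cut out in $M\times M$ by the single smooth equation $\{y^n=0\}$ with $y=y(p,q)$ the fold coordinate depending smoothly on $p$; equivalently $\Sigma=\{(p,\exp_p(v)): \det \d_v\exp_p(v)=0\}$, and by Lemma~\ref{lemma_2.1}(b) the locus $\det\d_v\exp_p(v)=0$ is a smooth hypersurface $S$ in $TM$ with nonvanishing $v$-differential of $\det\d_v\exp_p$, so its image under the (local) diffeomorphism $(p,v)\mapsto(p,\exp_p(v))$ off $S$... here one must be slightly careful since $\exp_p$ is \emph{not} an immersion on $S$, but $\exp_p|_{S(p)}$ \emph{is} an immersion by the normal form \r{2.1} (on $S(p)=\{\xi^n=0\}$, $\exp_{p_0}$ restricts to $y'=\xi'$), so $(p,v)\mapsto(p,\exp_p(v))$ restricted to $S$ is an immersion, giving $\Sigma$ as a smooth $(2n-1)$-dimensional hypersurface. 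The fiber bundle structure over $p$ (resp.\ over $q$) is then clear from part (a) and Lemma~\ref{lemma_2.2}.

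Finally the conormal bundle formula \r{2.5cn}. I would parametrize $\Sigma$ near $(p_0,q_0)$ by $(p,v)\in S$ via $\Phi(p,v)=(p,\exp_p(v))$. A covector $(\xi,\eta)\in T^*_{(p,q)}(M\times M)$ annihilates $T_{(p,q)}\Sigma=\mathrm{Im}\,d\Phi$ iff for every tangent direction $\delta p$ along $M$ and every $\delta v\in T_vS(p)$ one has $\xi(\delta p)+\eta\big(\partial_p\exp_p(v)\,\delta p+\d_v\exp_p(v)\,\delta v\big)=0$. Taking $\delta p=0$ forces $\eta$ to annihilate $\mathrm{Im}\big(\d_v\exp_p(v)|_{T_vS(p)}\big)$; but on $S(p)$ the image of the full differential already equals the image of its restriction to $T_vS(p)$ (from \r{2.1}, differentiating in the $\xi^n$ direction at $\xi^n=0$ gives $0$ in the $y^n$ slot to first order), so $\eta\in\Coker\d_v\exp_p(v)=N_q^*\Sigma(p)$. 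Taking $\delta v=0$ and letting $\delta p$ range gives $\xi=-\eta_i\,\partial\exp_p^i(v)/\partial p$ — matching \r{2.5cn} once one tracks the sign convention (the statement writes $\xi=\eta_i\partial\exp_p^i(v)/\partial p$ for $\mathcal{N}^*\Sigma$, with the opposite sign appearing in $\mathcal{C}$ of \r{C}, consistent with the ``prime'' $\eta\mapsto-\eta$ remark). The main obstacle I anticipate is the smoothness-in-$p$ bookkeeping: justifying that the Morin normal form \r{2.1} — or at least the conclusions drawn from it, namely that $\exp_p|_{S(p)}$ is an immersion and $\mathrm{Im}\,\d_v\exp_p(v)$ is a smoothly varying hyperplane — holds \emph{uniformly with $p$ as a smooth parameter}. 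I would handle this not by invoking a parametrized normal form but directly: $S$ is smooth by Lemma~\ref{lemma_2.1}(b); the rank of $\d_v\exp_p(v)$ is constantly $n-1$ on $S$ near $(p_0,v_0)$ by regularity and multiplicity one, so $q\mapsto T_q\Sigma(p)=\mathrm{Im}\,\d_v\exp_p(v)$ and its annihilator vary smoothly by the constant-rank theorem, which is exactly what \r{2.5cn} needs. Everything else is linear algebra.
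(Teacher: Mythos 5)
The one genuine gap is your justification of the direct sum \r{tq}. You claim that the line $N_q(w)$ is transversal to $T_q\Sigma(p)$ by arguing that tangency would, via the blowdown normal form \r{2.1b}, produce a two\--parameter family of solutions of $\exp_p(v)=q$, contradicting the fold's two\--solution count "as in Lemma~\ref{lemma_2.2}". But the $\B_1$ condition concerns tangency of $N_q(w)$ to the \emph{tangent} conjugate locus $S(q)\subset T_qM$, not tangency of $N_q(w)$ to the submanifold $\Sigma(p)\subset M$; these are two different hyperplanes through $w$, and Lemma~\ref{lemma_2.2} already rules out the former. Assuming $N_q(w)\subset T_q\Sigma(p)$ therefore gives you no $\B_1$ structure and no extra solution family --- $\exp_p$ is still a fold at $v$, the equation $\exp_p(v)=q$ still has exactly two nearby solutions, and there is nothing to contradict. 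In fact this transversality is not a consequence of the fold normal forms at $p$ and $q$ alone: it is exactly where hypothesis (R2) enters, and this is the route the paper takes. For $0\not=\alpha\in N_p(v)$, the field with $J(0)=J(1)=0$, $\dot J(0)=\alpha$ satisfies $\dot J(1)=\d^2\exp_p(v)(\alpha\times v)$ by \r{JJ}, and by \r{2.4} the vector $\dot J(1)$ spans $N_q(w)$; (R2) says precisely that this Hessian value is nonzero modulo $\mathrm{Im}\,\d\exp_p(v)=T_q\Sigma(p)$, which is \r{tq}. (Only when $n=2$ could you bypass (R2), since there $T_q\Sigma(p)=\R w$ by Lemma~\ref{lemma_2.1}(a) and (R1) forbids the radial direction from lying in $N_q(w)$.) You cite \r{2.4} in your opening plan but never deploy it at this step, which is where it is actually needed.

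The remainder of your proposal is correct and essentially a repackaging of the paper's argument. Your immersion argument for (b) --- $\Phi(p,v)=(p,\exp_p(v))$ restricted to $S$ is an immersion because $N_p(v)\cap T_vS(p)=0$ --- is the same computation the paper performs by showing $\d\pi$ is injective on the tangent space of the incidence manifold $L$ cut out by \r{2.6}, with differential \r{2.7}; and your derivation of \r{2.5cn} by annihilating $\mathrm{Im}\,d\Phi$, using that $\d_v\exp_p(v)(T_vS(p))=\mathrm{Im}\,\d_v\exp_p(v)$ because the kernel is complementary to $T_vS(p)$, is dual to the paper's computation with linear combinations of the rows of \r{2.7} (its elimination of the coefficient $b$ there is your "$\delta p=0$" step). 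Your handling of the sign (harmless, since $\Coker\d_v\exp_p(v)$ is a linear space, so $\eta\mapsto-\eta$ does not change the set) and of smooth dependence on $p$ via a constant\--rank argument rather than a parametrized normal form are both fine.
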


\begin{proof}
We start with (a). 
By the normal form \r{2.1}, also clear from the fold condition, 
the image of $S(p)$ under $\d\exp_p(v)$ coincides with $T_q\Sigma(p)$. 
In particular, $\d\exp_p(v)$, restricted to $S(p)$ is a diffeomorphism to its image. Relation \r{tq} follows from \r{2.4} and (R2).

Consider  (b). We have $(p,q)\in \Sigma$ if and only if there exists $v$ (near $v_0$) so that 
\be{2.6}
q=\exp_p(v), \quad \det\d_v \exp_p(v)=0.
\ee
In some local coordinates, we view this as $n+1$ equations for the $3n$-dimensional variable $(p,q,v)$ near $(p_0,q_0,v_0)$. We show first that the solution that we denote by $L$, is a $(2n-1)$-dimensional submanifold. To this end, we need to show that the following differential has rank $n+1$ at $(p_0,q_0,v_0)$:
\be{2.7}
\begin{pmatrix}
\d_p \exp_p(v)&-\Id & \d_v \exp_p(v)\\
\d_p \det \d_v\exp_p(v)& 0& \d_v \det \d_v \exp_p(v)
\end{pmatrix}.
\ee
The elements of the first ``row'' are $n\times n$ matrices, while the second row consists of three $n$-vectors. That the rank of the differential above is full follows from the fact that $\d_v \det \d_v \exp_p(v)\not=0$ at $(p_0,v_0)$, guaranteed by the fold condition. 

Set $\pi(p,q,v)=(p,q)$. We  show next that $\pi(L)$ is a $(2n-1)$-dimensional submanifold, too. To this end, we need to show that $\d\pi$ is injective on $TL$. The tangent space to $L$ is given by the orthogonal complement to the rows of \r{2.7}. Let us denote any vector in $TL$ by $\rho =(\rho_p,\rho_q,\rho_v)$. Then $\d\pi(\rho) = (\rho_p,\rho_q)$. Our goal is therefore to show that $\rho_p=\rho_q=0$ implies $\rho_v=0$. Then $(0,0,\rho_v)$ is orthogonal to the rows of \r{2.7}, therefore,
\[
\rho_v^i\partial_{v^i}\exp_p^k (v)=0, \quad k=1,\dots,n, \quad 
\rho_v^i\partial_{v^i}\det \d_v \exp_p(v)=0. 
\]
The latter identity shows that $\rho_v\in N_p(v)$, while the first one shows that $\rho_v\in \Ker \d_v \exp_p(v)$. By the fold condition, $\rho_v=0$. 

This analysis also shows that the covectors $\nu$ orthogonal to $\Sigma$ are of the form $\nu=(\nu_p,\nu_q)$ with the property that $(\nu_p,\nu_q,0)$ is conormal to $L$. Since the conormals  to $L$ are spanned by the rows of \r{2.7}, in order to get the third component to vanish, we have to take a linear combination with coefficients $a_i$, $i=1,\dots,n$ and $b$ so that 
\be{2.5a}
a_i\frac{\partial q^i}{\partial v^j}+b \frac{\partial \det \d_v \exp_p(v) }{\partial v^j}=0, \quad \forall j,
\ee
where $q=\exp_p(v)$. Let $0\not=\alpha\in N_p(v)$. Multiply by $\alpha^j$ and sum over $j$ above to get that the $v$-derivative of $b\det \d_v \exp_p(v)$ in the direction of $N_p(v)$ vanishes. According to the fold assumption, this is only possible if $b=0$. 
Then we get that $a\in \Coker\d_v \exp_p(v)$. 
Therefore the normal covectors to $\Sigma$ are of the form
\be{2.8}
\nu = \left(\left\{a_i \frac{\partial q^i}{\partial p^j}\right\}, -a\right), \quad a\in   \Coker\d_v \exp_p(v), %N_q(w). 
\ee
that proves \r{2.5cn}. 

\end{proof}

\begin{theorem} Let $v_0$ be a fold conjugate vector at $p_0$. 
Let $\exp_p$ be the exponential map of a Riemannian metric. 

(a) Then the sum in \r{tq}  is an orthogonal one, i.e.,
\[
N_q\Sigma(p) = N_q(w). 
\]

(b) Next, \r{2.5N} also admits the representation
\be{2.5N}
\begin{split}
\mathcal{N}\Sigma = &\big\{ (p,q,\alpha,\beta);\; (p,q)\in \Sigma, \,  \alpha =J'(0), \beta= -J'(1),\mbox{ where $J$ is any Jacobi field}\\
&\quad \mbox{along the locally unique geodesic connecting $p$ and $q$ with  $J(0)=J(1)=0$}  \big\}.
\end{split}
\ee

(c) $\mathcal{N}\Sigma$ is a graph of a smooth map $(p,\alpha)\mapsto (q,\beta)$ if and only if condition \r{cond} is fulfilled. Then that maps is a local diffeomorphism. 
\end{theorem}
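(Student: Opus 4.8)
The plan is to reduce everything to the analysis already carried out in Lemma~\ref{lemma_graph} and Theorem~\ref{thm_F}, adding only the Riemannian-specific refinements. For part (a): by Theorem~\ref{thm_F}(a) we already know $T_q\Sigma(p) = \operatorname{Im}\d\exp_p(v)$ and $N_q(w)$ is a complement to it in $T_qM$. In the Riemannian case the Gauss lemma and the standard Jacobi-field orthogonality property (the one invoked to justify (R2)) say that for the locally unique geodesic joining $p$ and $q$, the non-vanishing values at $q$ of Jacobi fields vanishing at $p$, together with the covariant derivatives at $q$ of those that vanish at $q$, span $T_qM$ \emph{orthogonally}. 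The former collection spans $\operatorname{Im}\d\exp_p(v) = T_q\Sigma(p)$, the latter spans $N_q(w)$ (this is exactly the content of the isomorphism \r{2.4}, now read covariantly since $J(1)=0$). Orthogonality of these two spaces is the assertion $N_q\Sigma(p) = N_q(w)$.

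For part (b): I would rewrite \r{2.5N} by translating the coordinate description of $\mathcal{N}^*\Sigma$ in \r{2.5cn} into Jacobi-field language, using the musical isomorphism to pass from conormals to normals. By Theorem~\ref{thm_F}, a normal covector at $(p,q)\in\Sigma$ has the form $\nu=(a_i\,\partial q^i/\partial p^j,\,-a)$ with $a\in\Coker\d_v\exp_p(v)$; in the Riemannian case part (a) identifies $\Coker\d_v\exp_p(v)$ (metrically) with $N_q(w)$, i.e.\ with covariant derivatives $J'(1)$ of Jacobi fields with $J(0)=J(1)=0$. The remaining point is that the $p$-component of a normal vector, namely the metric dual of $a_i\,\partial q^i/\partial p^j$, equals $-J'(0)$ for the corresponding Jacobi field; this is the same computation as in the proof of Lemma~\ref{lemma_2.2} (differentiating $p=\exp_q(w)$, resp.\ $q=\exp_p(v)$, with respect to the appropriate variable), together with the symmetry of the second variation of arclength, which gives that the pairing $\langle \alpha, J'(0)\rangle$ is symmetric in the two endpoint data. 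The sign conventions ($\beta=-J'(1)$ versus the prime/minus-$\eta$ bookkeeping in \r{C}) have to be matched carefully, and this is the one genuinely fiddly bookkeeping step.

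For part (c): $\mathcal{N}\Sigma$ is a smooth $(2n-1)$-dimensional submanifold of $TM\times TM$ (it sits over $\Sigma$, which is a smooth $(2n-1)$-manifold by Theorem~\ref{thm_F}(b), with one-dimensional fibers $N_q(w)$). Projecting to $(p,\alpha)$: the fiber over a fixed $p$ of the map $(p,\alpha)\mapsto(q,\beta)$ is, by \r{2.5N} and the isomorphism \r{2.4}, exactly the map $S(p)\ni v\mapsto \alpha(p,v)\in N_p(v)$ up to scaling and smooth reparametrization — here $v\leftrightarrow q$ via the restriction of $\exp_p$ to $S(p)$, which is a diffeomorphism by Theorem~\ref{thm_F}(a), and $\alpha\leftrightarrow\alpha(p,v)$ up to normalization. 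Hence $\mathcal{N}\Sigma$ is a graph over the $(p,\alpha)$-variables, dimension count being automatic, precisely when that fiberwise map is a local diffeomorphism for each $p$, smoothly in $p$; Lemma~\ref{lemma_graph} says this holds iff \r{cond}. Once it is a graph between equidimensional manifolds and the fiber maps are diffeomorphisms, invariance of domain (or the inverse function theorem, using that the full differential is block-triangular with invertible diagonal blocks $\d\exp_p|_{S(p)}$ and the fiber differential) upgrades it to a local diffeomorphism.

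The main obstacle is part (c): one must check not merely that the individual fiber maps $v\mapsto\alpha(p,v)$ are local diffeomorphisms (that is Lemma~\ref{lemma_graph}), but that the \emph{total} map $(p,\alpha)\mapsto(q,\beta)$ on the $(2n-1)$-dimensional manifold $\mathcal{N}\Sigma$ is a local diffeomorphism — equivalently that the differential has no kernel transverse to the fibers. This is where one needs the block structure of the differential and the fact that the $q$-component already depends diffeomorphically on $(p,v)$ via $\exp_p|_{S(p)}$ (Theorem~\ref{thm_F}(a)); the $\beta$-direction is controlled by the fiber condition \r{cond}. Carefully organizing this as a triangular system, rather than re-deriving a monolithic Jacobian, is what makes the argument go through cleanly.
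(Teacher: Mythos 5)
Your strategy coincides with the paper's: (a) is the orthogonality of $N_q(w)$ with $\mathrm{Im}\,\d\exp_p(v)=T_q\Sigma(p)$, (b) is a Wronskian (second--variation) computation, and (c) is read off from Lemma~\ref{lemma_graph}, which is exactly how the paper handles it. In (a) you quote the classical fact that values at $q$ of Jacobi fields vanishing at $p$ are orthogonal to covariant derivatives at $q$ of those vanishing at both endpoints, while the paper instead uses the adjoint identity $(\d\exp_p(v))^*=\d\exp_q(w)$ of \cite[Lemma~IX.3.5]{Lang_manifolds}; these are equivalent, and combined with \r{2.4} both give $N_q\Sigma(p)=N_q(w)$. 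Your elaboration in (c) (graph over the $(p,\alpha)$ variables via the implicit-function-theorem content of Lemma~\ref{lemma_graph}, which already solves for $v$ smoothly in $(p,\alpha)$ jointly) is consistent with the paper, which simply cites that lemma; note only that $\mathcal{N}\Sigma$ is $2n$-dimensional (a line bundle over the $(2n-1)$-dimensional $\Sigma$), not $(2n-1)$-dimensional as you wrote.

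The step you deferred in (b) is, however, the actual content of \r{2.5N}, and as stated your sign is the wrong one. With $\nu=(a_i\,\partial q^i/\partial p^j,\,-a)$ as in \r{2.8} and $a=J'(1)$ for the Jacobi field $J$ with $J(0)=J(1)=0$, the paper introduces the auxiliary Jacobi field $\tilde J$ with $\tilde J(0)=e_j$, $\tilde J'(0)=0$, so that $\tilde J(1)=\partial q/\partial p^j$, and applies constancy of the Wronskian (\cite[Lemma~IX.3.4]{Lang_manifolds}, i.e.\ your ``symmetry of the second variation''): $a_i\,\partial q^i/\partial p^j=\langle J'(1),\tilde J(1)\rangle-\langle J(1),\tilde J'(1)\rangle=\langle J'(0),e_j\rangle$, so the $p$-component is $+J'(0)$ and $(\alpha,\beta)=(J'(0),-J'(1))$. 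You claim the $p$-component is $-J'(0)$ while the $q$-component is $-a=-J'(1)$; since $J$ is determined only up to a scalar, the overall sign is irrelevant but the \emph{relative} sign of $\alpha$ and $\beta$ is precisely what \r{2.5N} asserts, and yours is the opposite one --- it would describe the primed Lagrangian appearing in the canonical relation \r{C} rather than $\mathcal{N}\Sigma$ itself. Also, pointing to the computation in Lemma~\ref{lemma_2.2} is not quite the right reference: that computation yields the isomorphism \r{2.4} identifying $a$ with $J'(1)$, but the identification of $a_i\,\partial q^i/\partial p^j$ with $J'(0)$ requires the second field $\tilde J$ and the Wronskian identity above. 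With that computation inserted, your argument is complete and is essentially the paper's.
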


\begin{remark}
Note that for $(p,q)\in \Sigma$, the geodesic connecting $p$ and $q$ is unique,  as follows from the normal form \r{2.1}, only among the geodesics with $\dot\gamma(0)$ close to $v_0$. Also, $J$ is determined uniquely up to a multiplicative constant. 
Next, once we prove that $\Sigma$ is smooth, then $\alpha\in N_p(v)$ and $\beta\in N_q(w)$ by (a) (see also \r{2.1}), but \r{2.5N} gives something more than that --- it restricts $(\alpha,\beta)$ to an one-dimensional space. 
\end{remark}

\begin{remark}
It is a natural question whether $|J'(0)|=|J'(1)|$. One can show that generically, this is not the case. 
\end{remark}

\begin{proof}
By \cite[Lemma~IX.3.5]{Lang_manifolds},  the conjugate of $\d\exp_p(v)$, w.r.t.\ the metric form is given by
\be{2.5}
\left(\d\exp_p(v)\right)^* =  \d\exp_q(w),
\ee
where we use the notation \r{qw}. 
The normal to $\Sigma(p)$ at $q$  is in the orthogonal complement to the image of $\d\exp_p(v)$, that by \r{2.5} is   $\Ker \d\exp_q(w) =N_q(w)$. This proves (a).

  Then we get by \r{2.5}, \r{2.5a} (where $b=0$) that $a\in N_q(w)$, where we identify the covector $a$ with a vector by the metric. 

We will use now \cite[Lemma~IX.3.4]{Lang_manifolds}: for any two Jacobi fields $J_1$, $J_2$ along a fixed geodesic, the Wronskian  $\langle J_1',J_2\rangle -\langle J_1,J_2'\rangle $ is constant.  Along the geodesic connecting $p$ and $q$, in fixed coordinates near $p$, let $\tilde J$ be determined by $\tilde J(0)=e_j$, $\tilde J'(0)=0$. Here $e_j$ has components $\delta_j^i$. 
If $p$ and $q$ are conjugate to each other, then $\tilde J(1)$ is the %$j-th component of 
equal to the variation $\partial q/\partial p^j$, and this is independent on the choice of the local coordinates, as long as $e_j$ is considered as a fixed vector at $p$. 
Define another Jacobi field by $J(1)=0$, $J'(1)=a$, where $a$ is as in \r{2.8} but considered as a vector. Denote  the field in the brackets in \r{2.8} by $X_j$. Then
\begin{align}  \label{2.9}
X_j &= \langle a,\tilde J(1)\rangle\\ \nonumber &= \langle J'(1),\tilde J(1)\rangle \\ \nonumber &=
\langle J'(1),\tilde J(1)\rangle- \langle J(1),\tilde J'(1)\rangle\\ \nonumber
&=\langle J'(0),\tilde J(0)\rangle- \langle J(0),\tilde J'(0)\rangle\\ \nonumber
&= J_j'(0).
\end{align}
This proves \r{2.5N}.

The proof of (c) follows directly from Lemma~\ref{lemma_graph}.
\end{proof}

\section{The Schwartz kernel of  $N$ near the diagonal and  mapping properties of $X$ and $N$}\label{sec_5}
\subsection{The geodesic case}
Let $\exp$ be the exponential map of the metric $g$. Then $X$ is the weighted  geodesic ray transform. One way to parametrize the geodesics is the following. Let $H$ be any orientable hypersurface with the property that it intersects transversally, at one point only, any geodesic in $\Omega$ issued from a point in $\mathcal{U}$. For our local analysis, $H$ can be an arbitrarily small surface intersecting transversally $\gamma_{p_0,v_0}$, so let us fix that choice. 
Let $\d\Vol_H$ be the induced measure in $H$, and let $\nu$ be a smooth unit normal vector field on $H$ consistent with the orientation of $H$. Let $\mathcal{H}$ consists of all $(p,\theta)\in SM$ with the property that $p\in H$ and $\theta$ is not tangent to $H$, and positively oriented, i.e., $\langle\nu,\theta \rangle>0$.  Introduce the measure $\d\mu = \langle n,\theta\rangle\,\d\Vol_H(p)\,\d \sigma_p(\theta) $ on $\mathcal{H}$. 
%, where $\d\Vol_H$ is the induced volume (or surface, depending on the point of view) measure on $H$. 
Then one can parametrize all geodesics intersecting $H$ transversally by their intersection $p$ with $H$ and the corresponding direction, i.e., by elements in $\mathcal{H}$. An important property of $\d\mu$ is that it introduces a measure on that geodesics set that is invariant under a different choice of $H$ by the Liouville Theorem, see e.g., \cite{SU-Duke}. 

The weighted geodesic transform $X$ can be defined as in \r{1.1} for $(p,\theta)\in \mathcal{H}$ instead of $(p,\theta)\in \mathcal{U}$ because transporting $(p,v)$ along the geodesic flow does not change the integral. Since we assumed originally that $\kappa$ is localized near a small enough neighborhood of $\gamma_{p_0,v_0}$, we get that $\kappa$ is supported in a small neighborhood of $(p_0,\theta_0)$ in $\mathcal{H}$. We view $X$ as the following map
\[
X :L^2(M) \to L^2(\mathcal{H},\d\mu),
\]
restricted to a neighborhood of $(p_0,\theta_0)$. This map is bounded, see \cite{Sh-book}, and this also follows from our analysis of $N$. By  the proof of Proposition~1 in \cite{SU-Duke}, $X^*X$  is given by 
\be{Xstar}
X^*Xf(p) =  \frac1{\sqrt{\det g(p)}} \int_{S_pM}\int\bar\kappa(p,\theta) \kappa\big(\exp_p(t\theta), \dot \exp_p(t\theta)\big) f(\exp_p(t\theta))\, \d t\, \d\sigma_p(\theta).
\ee
We therefore proved the following. 
\begin{proposition}\label{pr_N}
Let $\exp$ be the geodesic exponential map. 
Let $X$ be the weighted geodesic ray transform \r{1.1}, and  let $N$ be as in \r{1.2}, depending on $ {\kappa^\sharp}$. Then
\[
X^*X = N\quad \mbox{with $ {\kappa^\sharp}=\bar\kappa$}.
\]
\end{proposition}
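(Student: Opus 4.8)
The plan is to compute the adjoint $X^*$ explicitly from the parametrization of the geodesics by the transversal surface $H$, and then to recognize $X^*X$ as an operator of the form \r{1.2}. The only nontrivial ingredient is the Santal\'o-type identity already invoked above: if $\phi_t$ denotes the geodesic flow on $SM$ and $\Psi\colon\mathcal{H}\times\R\to SM$, $\Psi(z,t)=\phi_t(z)$, is the (locally defined) flow-out of the cross-section $\mathcal{H}$, then $\Psi$ is a local diffeomorphism that pulls the Liouville measure $\d L$ on $SM$ back onto $\d\mu\otimes\d t$ — the flux factor $\langle\nu,\theta\rangle$ built into $\d\mu$ is exactly what makes this true, and, as noted in the text, this is why $\d\mu$ does not depend on the choice of $H$. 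In local coordinates $\d L$ equals $\d x\,\d\sigma_x(\theta)$ up to a smooth positive factor depending only on $\det g(x)$.

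With this in hand I would expand, for $f\in C_0^\infty(M)$ and $h\in C_0^\infty(\mathcal{H})$,
\[
\langle Xf,h\rangle_{L^2(\mathcal{H},\d\mu)}=\int_{\mathcal{H}}\Big(\int_\R \kappa(\phi_t z)\,f\big(\pi(\phi_t z)\big)\,\d t\Big)\,\overline{h(z)}\,\d\mu(z),
\]
with $\pi\colon SM\to M$ the base projection, and then change variables $(z,t)\mapsto(x,\theta)=\phi_t(z)$, under which $\d\mu(z)\otimes\d t=\d L(x,\theta)$ and $z=z(x,\theta)\in\mathcal{H}$ is the point where the geodesic through $(x,\theta)$ meets $H$. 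This turns the pairing into $\int_{SM}\kappa(x,\theta)\,f(x)\,\overline{h(z(x,\theta))}\,\d L(x,\theta)$; writing $\d L$ in coordinates and applying Fubini in $x$ reads off
\[
X^*h(x)=c(x)\int_{S_xM}\bar\kappa(x,\theta)\,h(z(x,\theta))\,\d\sigma_x(\theta),
\]
where $c(x)$ is the smooth positive $\det g(x)$-factor from $\d L$ (and equals $1/\sqrt{\det g(x)}$ for the normalization used in \r{Xstar}). Finally I would substitute $h=Xf$ and use the crucial flow-invariance of $Xf$ — since $Xf(z)$ is an integral over the whole geodesic, $Xf(z(x,\theta))=Xf(x,\theta)=\int_\R\kappa(\exp_x(t\theta),\dot\exp_x(t\theta))\,f(\exp_x(t\theta))\,\d t$ — which reproduces \r{Xstar}. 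Comparing \r{Xstar} with \r{1.2} gives $X^*X=N$ with $ {\kappa^\sharp}=\bar\kappa$; the factor $1/\sqrt{\det g(p)}$ is an inessential smooth positive prefactor, an artifact of normalizing the $L^2(M)$ pairing by the coordinate measure rather than by $\d\Vol_g$. One should also note that $\supp {\kappa^\sharp}\subset\mathcal{U}$ since $\kappa$ is localized near $(\gamma_{p_0,v_0},\dot\gamma_{p_0,v_0})$, and that the boundedness $X\colon L^2(M)\to L^2(\mathcal{H},\d\mu)$ needed for $X^*$ to make sense can be quoted from \cite{Sh-book} or recovered afterwards from a Schur test on the Schwartz kernel of $N$ computed in the following sections.

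The only real difficulty is the bookkeeping of measures: one has to set up the Santal\'o/Liouville suspension correctly, keeping track of the flux factor $\langle\nu,\theta\rangle$ in $\d\mu$ and of the $\det g$-Jacobian that arises when passing from $\d L$ to fibre-wise integration over the base against $\d\sigma_x(\theta)$, since that is what fixes the scalar normalization and the precise location of the $\sqrt{\det g}$ factor in \r{Xstar}. Everything else is a single change of variables combined with the flow-invariance of $Xf$.
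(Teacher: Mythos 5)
Your proposal is correct and takes essentially the same route as the paper: the paper defines $X$ on $\mathcal{H}$ with the flux measure $\d\mu$ and then simply cites the proof of Proposition~1 in \cite{SU-Duke} for formula \r{Xstar}, which is exactly the Santal\'o/Liouville change-of-variables computation you carry out together with the flow-invariance of $Xf$. Your observation that the $1/\sqrt{\det g}$ prefactor is a measure-normalization artifact is also consistent with how the paper passes from \r{Xstar} to the identification $X^*X=N$ with $\kappa^\sharp=\bar\kappa$ in \r{1.2}.
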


Split the $t$ integral in \r{Xstar} in two: for $t>0$ and for $t<0$, and make a change of variables $(t,\theta)\mapsto(-t,-\theta)$ in the second one to get
\be{Xstar2}
X^*Xf(p) =  \frac1{\sqrt{\det g(p)}} \int_{T_pM} W(p,v) f(\exp_p(v))\,\d\Vol(v),
\ee
where
\be{W}
\begin{split}
W  &= |v|^{-n+1}\Big(\bar\kappa(p,v/|v|)\kappa \big(\exp_p(v), \dot \exp_p(v)/| v| \big) \\
& \qquad\quad \qquad + \bar\kappa(p,-v/|v|)\kappa \big(\exp_p(v), -\dot \exp_p(v)/| v| \big)\Big). 
\end{split}
\ee
Note that $|\dot\exp_p(v)|=|v|$ in this case. 

Next we recall a result in \cite{SU-Duke}. Part (a) is based on formula \r{Xstar2} after a change of variables. 

\begin{theorem}[\cite{SU-Duke}] \label{thm_Duke}
Let $\exp$ be the exponential map of $M$. Assume that $\exp_p : \exp_p^{-1}(M)\to M$ is a diffeomorphism for $p$ near $p_0$. 

(a) Then for $p$ in the same neighborhood of $p_0$,
\be{7.1}
X^*Xf(p) = \frac1{\sqrt{\det g(p)}}\int  A(p,q)\frac{f(y)}{\rho(p,q)^{n-1}}\Big| \det \frac{\partial^2(\rho^2/2)}{\partial p \partial q}\Big|\,\d q,
\ee
where
\[
A(p,q) = \bar \kappa(p, -\grad_p\rho)\kappa(q,\grad_q\rho) + 
\bar \kappa(p, \grad_p\rho)\kappa(q,-\grad_q\rho).
\]

(b) $X^*X$ is a classical \PDO\ of order $-1$ with principal symbol
\be{a2}
\sigma_p(X^*X)(x,\xi)  = 2\pi \int_{S_xM}\delta(\xi(\theta))  |\kappa(x,\theta)|^2  \,\d\sigma_x(\theta),
\ee
where $\xi(\theta)=\xi_i\theta^j$, and $\delta$ is the Dirac delta function.
\end{theorem}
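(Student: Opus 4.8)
The plan is to obtain part (a) by a change of variables in the formula \r{Xstar2} that we have already established, and then to read part (b) off the resulting Schwartz kernel. Since by hypothesis $\exp_p\colon \exp_p^{-1}(M)\to M$ is a diffeomorphism for $p$ near $p_0$, all of this takes place in the conjugate-point-free regime, so the kernel below is smooth away from the diagonal.

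For (a): in \r{Xstar2} the inner integral is over $v\in T_pM$; I substitute $q=\exp_p(v)$, so that $\d\Vol(v)=|\det\d_v\exp_p(v)|^{-1}\,\d\Vol_g(q)$, the Jacobian being read off from Jacobi fields along $\gamma_{p,v}$. Two further identifications turn \r{W} and this Jacobian into the integrand of \r{7.1}. First, near the diagonal $|v|=\rho(p,q)$, the initial unit direction $v/|v|\in S_pM$ is $-\grad_p\rho(p,q)$ and the terminal unit velocity $\dot\exp_p(v)/|v|\in S_qM$ is $\grad_q\rho(p,q)$; feeding these into \r{W} turns $|v|^{-n+1}$ times the bracket into $\rho(p,q)^{-(n-1)}A(p,q)$ with $A$ exactly as stated. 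Second, I invoke the classical relation between the Jacobian of $\exp_p$ and the mixed Hessian of the world function $S=\rho^2/2$ — the Van Vleck--Morette determinant identity — which expresses $|\det\d_v\exp_p(v)|^{-1}$, together with the relevant Riemannian volume densities, through $\bigl|\det\partial^2(\rho^2/2)/\partial p\,\partial q\bigr|$; combining the resulting $\sqrt{\det g}$ factors with the leading $1/\sqrt{\det g(p)}$ of \r{Xstar2} produces \r{7.1}. The cleanest way to fix the density bookkeeping is to check \r{7.1} in geodesic normal coordinates at $p$ (where $g(p)=\Id$ and both sides collapse to the Euclidean computation) and note that both sides transform the same way under coordinate changes.

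For (b): \r{7.1} displays the Schwartz kernel of $X^*X$ as a factor that is smooth and nonvanishing near the diagonal (the Hessian determinant, in the right normalization) times $A(p,q)\,\rho(p,q)^{-(n-1)}$, in which $A$ is smooth and homogeneous of degree $0$ in the offset while $\rho^{-(n-1)}$ is homogeneous of degree $-(n-1)$, so the kernel is a classical conormal distribution to the diagonal; by the diffeomorphism hypothesis it is smooth off the diagonal. Hence $X^*X$ is a classical \PDO\ of order $(n-1)-n=-1$. For the principal symbol it is simplest to return to \r{Xstar}: freezing coefficients at a point $x$ and passing to normal coordinates so that $\exp_x(t\theta)=x+t\theta+O(t^2)$, the operator is, modulo lower order, $f\mapsto \int_{S_xM}\!\int_{\R}|\kappa(x,\theta)|^2 f(x+t\theta)\,\d t\,\d\sigma_x(\theta)$, whose kernel in the offset variable is a superposition over $\theta\in S_xM$ of arclength measure on the line $\R\theta$, weighted by $|\kappa(x,\theta)|^2$. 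Fourier transforming, each line measure becomes $2\pi\delta(\xi(\theta))$, giving $\sigma_p(X^*X)(x,\xi)=2\pi\int_{S_xM}\delta(\xi(\theta))\,|\kappa(x,\theta)|^2\,\d\sigma_x(\theta)$, which is \r{a2}; the normal-coordinate normalization absorbs the $1/\sqrt{\det g(x)}$, $\delta(\xi(\theta))$ is homogeneous of degree $-1$ in $\xi$ consistent with order $-1$, and the result is manifestly invariant. (Equivalently, writing the near-diagonal kernel as $a(x,z/|z|)\,|z|^{-(n-1)}$ and transforming directly also works, the principal-value contribution dropping out by the evenness of $a$ in its direction argument.)

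The computation poses no essential difficulty; the one genuinely delicate point is the metric-density bookkeeping in part (a) — getting the exact normalization of the Van Vleck--Morette identity right so that \r{7.1} comes out with the stated factor $1/\sqrt{\det g(p)}$ and the stated power $\rho^{-(n-1)}$. The change of variables, the identification of the weight $A$ via $\grad\rho$, the recognition of the conormal singularity, and the Fourier-transform evaluation of the symbol are all routine.
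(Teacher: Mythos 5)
Your proposal is correct and follows essentially the route the paper intends: Theorem~\ref{thm_Duke} is quoted from \cite{SU-Duke}, with the paper only remarking that part (a) comes from \r{Xstar2} after a change of variables — exactly the substitution $q=\exp_p(v)$ with the $\grad\rho$ identifications and the Hessian-of-$\rho^2/2$ (Van Vleck type) Jacobian identity that you carry out — while your part (b), reading off the conormal order from the kernel and computing the symbol by freezing coefficients and Fourier transforming the weighted line measures, is the same near-diagonal argument the paper uses for Theorem~\ref{thm_PDO} via \cite[Lemma~2]{FSU}. The only caveat is the one you already flag: the $\sqrt{\det g}$ bookkeeping in \r{Xstar}, \r{Xstar2}, \r{7.1} and \r{a2} depends on the measure conventions of \cite{SU-Duke}, so fixing the normalization by a normal-coordinate check, as you propose, is the right way to make the constants in \r{7.1} and \r{a2} come out as stated.
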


Note that the integral \r{7.1} is not written in an invariant form but one can easily check that writing it w.r.t.\ the volume form, the kernel is invariant. We also note that in the proof of Theorem~\ref{thm_main}, we apply the theorem above by restricting $\supp f$ and the region where we study $Nf$ to a small enough neighborhood of $p_0$, where we there will be no conjugate points. This gives the \PDO\ part $A$ of $N$ in Theorem~\ref{thm_main}.

\bigskip
\paragraph{\bf Mapping properties of $X$}\label{sec_map} 
Let $(x',x^n)$ be semigeodesic coordinates on $H$ near $x_0$. Then $(x',\xi')$ parameterize the vectors near $(x_0,\theta_0)$.   
We define the Sobolev space $H^1(\mathcal{H})$ of functions constant along the flow, supported near the flow-out of $(x_0,\theta_0)$ as the $H^s$ norm in those coordinates w.r.t.\ the measure $\d\mu$. We can chose another such surface $H$ near $q_0$ with some fixed coordinates on it; the resulting norm will the be equivalent to that on $\mathcal{H}$.

\begin{proposition}\label{pr_map}  
With the notation and the assumptions above, for any $s\ge0$, the operators 
\begin{align}\label{map1}
X : H^s_0(V)& \longrightarrow H^{s+1/2}(\mathcal{H}),\\\label{map2}
X^*X : H^s_0(V)&\longrightarrow H^{s+1}(V)
\end{align}
are bounded. 
\end{proposition}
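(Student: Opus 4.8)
The plan is to deduce the mapping properties of $X$ from the already-established structure of $N=X^*X$, exactly as one does for the non-conjugate case. Recall from Theorem~\ref{thm_Duke}(b) that, after localizing $\supp f$ and the region where $Nf$ is examined to a small enough neighborhood of $p_0$ containing no conjugate points, $N=X^*X$ is a classical $\Psi$DO of order $-1$, elliptic in the conormal directions to $\gamma_0$. More precisely, on the microlocal region cut out by $\kappa^\sharp$ near $(p_0,\theta_0)$, the principal symbol \eqref{a2} is elliptic on the set of $(x,\xi)$ with $\xi$ conormal to some geodesic in the support of $\kappa$; and since $f$ is supported in $V$ with $\WF(f)$ the only thing that matters, one localizes further so that $N$ acts as an elliptic $\Psi$DO of order $-1$ microlocally. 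The key point is that \eqref{map2} is then immediate: for $f\in H^s_0(V)$, the $\Psi$DO of order $-1$ maps $H^s\to H^{s+1}$ with the relevant support property, which gives the bound $\|X^*Xf\|_{H^{s+1}(V)}\lesssim\|f\|_{H^s(V)}$.

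First I would set up the geometry: choose the hypersurface $H$ transversal to $\gamma_{p_0,v_0}$ as in subsection~5.1, with coordinates $(x',\xi')$ parametrizing the nearby unit vectors, and the measure $\d\mu=\langle\nu,\theta\rangle\,\d\Vol_H\,\d\sigma_p(\theta)$ on $\mathcal{H}$; this makes $H^s(\mathcal{H})$ well-defined and, by the Liouville theorem invariance noted in the excerpt, independent of the choice of $H$ up to equivalent norms. Next I would record that $X:L^2_0(V)\to L^2(\mathcal{H},\d\mu)$ is bounded (cited from \cite{Sh-book}), so that $X^*X$ is a well-defined bounded operator on $L^2$ and the identification $X^*X=N$ from Proposition~\ref{pr_N} applies. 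Then I would invoke Theorem~\ref{thm_Duke}, after shrinking $\mathcal{U}$ (hence $U$, $V$) so that $V$ contains no conjugate points, to conclude $X^*X\in\Psi^{-1}$, microlocally elliptic, proving \eqref{map2}.

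For \eqref{map1}, the standard trick is a $TT^*$ (or rather $T^*T$) argument: from $\|Xf\|^2_{H^{s+1/2}(\mathcal{H})}$ one wants to compare with $\langle X^*Xf,f\rangle$ up to lower-order terms. Concretely, I would introduce an elliptic $\Psi$DO $\Lambda$ of order $s+1/2$ on $\mathcal{H}$ (acting in the $(x',\xi')$ variables, i.e. transversally to the flow, which is legitimate since our functions are constant along the flow) and write $\|Xf\|^2_{H^{s+1/2}} \sim \|\Lambda Xf\|^2_{L^2} = \langle X^*\Lambda^*\Lambda X f, f\rangle$. The operator $X^*\Lambda^*\Lambda X$ should be, by the same kernel computation underlying Theorem~\ref{thm_Duke} together with the composition calculus, a $\Psi$DO of order $-1+2(s+1/2)=2s$ on $V$ (the conjugate-point contribution is smoothing here because we have localized $V$ away from conjugate points), elliptic where $X^*X$ is; hence $\langle X^*\Lambda^*\Lambda Xf,f\rangle\lesssim \|f\|^2_{H^s(V)}$, which gives \eqref{map1}. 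Equivalently and more cleanly, one notes that microlocal ellipticity of $X^*X$ of order $-1$ forces $X$ itself to be "elliptic of order $-1/2$" as an FIO associated to the (conjugation-free) canonical relation, so it gains $1/2$ derivative; either packaging works.

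The main obstacle, and the only genuinely delicate point, is making sure all the localization is consistent: one must shrink $\mathcal{U}$ so that $V$ has no conjugate points \emph{and} so that the cutoff $\kappa^\sharp$ confines the analysis to a conic neighborhood of $\mathcal{N}^*\gamma_0$ where the $\Psi$DO is elliptic, yet this is exactly the same shrinking already invoked in the statement of Theorem~\ref{thm_Duke} and in the construction of $N$, so it is harmless. One also has to check that the $\Psi$DO calculus on $\mathcal{H}$ — treating the flow-invariant functions as functions of $(x',\xi')$ — is compatible with the measure $\d\mu$, but this is routine given the semigeodesic coordinates. I would therefore present the proof as: (i) recall $X:L^2_0\to L^2$ bounded and $X^*X=N$; (ii) apply Theorem~\ref{thm_Duke} in the conjugate-point-free localization to get $X^*X\in\Psi^{-1}$, yielding \eqref{map2}; (iii) deduce \eqref{map1} by the $T^*T$ argument above, using the composition $X^*\Lambda^*\Lambda X\in\Psi^{2s}$. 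I expect step (iii)'s bookkeeping of orders to be the part most worth writing out carefully.
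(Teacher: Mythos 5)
There is a genuine gap, and it comes from changing the statement rather than proving it. The set $V$ in Proposition~\ref{pr_map} is the one fixed in Section~\ref{sec_2}: it contains the fold conjugate pair $(p_0,q_0)$, and only the endpoints of $\gamma_0$ are required to lie outside $V$. On that $V$ the operator $X^*X$ is \emph{not} a \PDO: by Theorem~\ref{thm_main} it is $A+F$ with $F$ a nontrivial FIO associated to $\mathcal{N}^*\Sigma$. Your step ``shrink $\mathcal{U}$ (hence $U$, $V$) so that $V$ contains no conjugate points'' therefore proves the proposition only for a smaller, conjugate-point-free $V$ --- a classical statement that does not serve the later applications (e.g.\ Theorem~\ref{thm_cancel} and estimate \eqref{L3}), which need the bounds exactly on a $V$ containing the conjugate locus; Theorem~\ref{thm_Duke} is simply not applicable on the actual $V$. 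Your fallback remark that ``the conjugate-point contribution is smoothing'' is also false: $F$ has order $-n/2$ (order $-1$ when $n=2$), and its Sobolev mapping properties are not automatic --- the paper explicitly notes that deducing \eqref{map2} from the FIO characterization of $N$ would require extra hypotheses such as the canonical graph condition, which can fail (see Section~\ref{sec_prod}).

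The workable route reverses your logical order: prove \eqref{map1} first and then derive \eqref{map2} from it, never using a \PDO\ property of $X^*X$ on all of $V$. For \eqref{map1}, your $T^*T$ idea is correct \emph{locally}: for $f$ supported in a small simple piece $M_0$ one has, for half-integer $s$, $\|Xf\|^2_{H^{s+1/2}(\mathcal{H}_0)}\lesssim \sum_{|\alpha|\le 2s+1}\bigl|\bigl(X^*\partial^\alpha_{x',\xi'}Xf,f\bigr)\bigr|$, and $X^*\partial^\alpha_{x',\xi'}X$ is a \PDO\ of order $|\alpha|-1$ because $M_0$ is simple; general $s$ follows by interpolation. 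The step you are missing is the gluing across the conjugate point: cover $\gamma_0$ by finitely many sets whose closures are simple, take a partition of unity $1=\sum\chi_j$, apply the local estimate to each $X\chi_j f$ on the corresponding $\mathcal{H}_j$, and use equivalence of the resulting norms with the one on $\mathcal{H}$. Then \eqref{map2} follows from \eqref{map1} by duality and commutation: writing derivatives of $X^*Xf$ as operators of the form $X^*_{\kappa_\beta}\partial^\beta_{x',\xi'}X$ with modified weights, one estimates $\bigl|\bigl(f,X^*_{\kappa_\beta}\partial^\beta_{x',\xi'}Xh\bigr)\bigr|\le C\|X_{\kappa_\beta}f\|_{H^{1/2}}\|Xh\|_{H^{1/2}}\le C\|f\|_{L^2}\|h\|_{L^2}$, which gives the case $s=0$, and the higher-order and non-integer cases follow by further commutation and interpolation. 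Without this reordering and the partition-of-unity step, the proposal does not prove the stated proposition.
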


\begin{proof}
Recall first that the weight $\kappa$ localizes in a small neighborhood of $(\gamma_0,\dot\gamma_0)$. Let first $f$ has small enough support in a set that we will call $M_0$.  Then $M_0$ will be a simple manifold if small enough. 
 Then we can replace $H$ by another surface $H_0$ that lies in  $M_0$, and denote by $\mathcal{H}_0$ the corresponding $\mathcal{H}$ . This changes the original parameterization to a new one, that will give us an equivalent norm. 

Then, if $s$ is a half-integer,
\[
\|Xf\|_{H^{s+1/2}(\mathcal{H}_0)}^2 
\le C\sum_{|\alpha|\le 2s+1}\Big| \left(    \partial_{x',\xi'}^\alpha   Xf,Xf  \right)_{L^2(\mathcal{H}_0)}\Big| 
=   C\sum_{|\alpha|\le 2s+1}\Big| \left( X^*   \partial_{x',\xi'}^\alpha   Xf,f  \right)_{L^2(\mathcal{H}_0)}\Big|.
\]
The term $\partial_{x',\xi'}^\alpha   Xf$ is a sum of weighted ray transforms of derivatives of $f$ up to order $|\alpha|$. Then $X^*\partial_{x',\xi'}^\alpha X$ is a \PDO\ of order $|\alpha|-1$ because $M_0$ is a simple manifold. That easily implies
\[
\|Xf\|_{H^{s+1/2}(\mathcal{H}_0)} \le C\|f\|_{H^s}.
\]
The case of general $s\ge0$ follows by interpolation, see, e.g., \cite[Sec~4.2]{Taylor-book1}.

To finish a proof, we cover $\gamma_0$ with open sets so that the closure of each one is a simple manifold.  Choose a finite subset and  a partition of unity $1=\sum\chi_j$ related to that. Then we apply the estimate above to each $X{\chi_j} f$ on the corresponding $\mathcal{H}_j$. We then have finitely many Sobolev norms that are equivalent, and in particular equivalent to the one on $\mathcal{H}$. This proves \r{map1}.

To prove the continuity of $X^*X$, we need to estimate the derivatives of $X^*X$.  We have that $\partial^\alpha X^*Xf$ is sum of  operators $X_{\kappa_\alpha}$ of the same kind but with possibly  different weights applied to derivatives of $Xf$ up to order $|\alpha|$, see \r{Xstar}. 
Let first $s=0$. 
For $f$, $h$ in $C_0^\infty(V)$, $|\beta|=1$, we have
\[
\Big|\left( f, X_{\kappa_\beta}^*\partial_{x',\xi'}^\beta X  h \right)_{L^2(V) }\Big| \le C \| X_{\kappa_\beta}f\|_{H^{1/2}} \|  X  h \|_{H^{1/2} }\le C
\|f\|_{L^2(V)}\|h\|_{L^2(V)}.
\]
In the last inequality, we used \r{map1} that we proved already. This proves \r{map2} for $s=0$. 

For $s\ge1$, integer, we can ``commute'' the derivative in  $\partial^\alpha X^*X$ with $X^*X$ by writing it as a finite sum of operators of the type $X^*_{\tilde \beta}X_{\beta}P_\beta f$, $|\beta|\le|\alpha|$, where $P_\beta$ are differential operators of order $\beta$. To this end, we first ``commute'' it with $X^*$, as above, and then with $X$. Then we apply \r{map2} with $s=0$. The case of general $s\ge0$ follows by interpolation. 
\end{proof}

\begin{remark}
We did not use the fold condition here. In fact, Proposition~\ref{pr_map} holds without any assumptions on the type of the conjugate points, as long as $V$ is contained in a small enough neighborhood of a fixed geodesic segment that extends to a larger one with both endpoints outside $V$. Note that proving the mapping properties of $X^*X$ based on its FIO characterization is not straightforward, and we would get the same conclusion under some assumptions only, for example that the canonical relation is a canonical graph; that is not always true. 
\end{remark}

\begin{remark}\label{remark_global}
A global version of Proposition~\ref{pr_map} can easily be derived by a partition of unity in the phase space. Let $(M,g)$ be a compact non-trapping Riemannian manifold with boundary. Let $M_1$ be another such manifold which interior includes $M$, and assume that $\partial M_1$ is strictly convex. Such $M_1$ always exists if $\partial M$ is strictly convex. Let $\partial_-SM_1$ denote the vectors with base point on $\bo$ pointing into $M_1$. Then we can parameterize all (directed) geodesics with points in $\partial_-SM_1$, that plays the role of $\mathcal{H}$ above. 
Then for $s\ge0$, 
\[
X : H_0^s(M) \longrightarrow H^{s+1/2}(\partial_-SM_1) , \quad 
X^* X : H_0^s(M) \longrightarrow H^{s+1}(M_1) 
\]
are bounded. 
\end{remark}

\subsection{General regular exponential maps}
Let now $\exp$ be a regular exponential map. 
As above, we split the $t$-integral in the second line below into two parts to get
\be{NStar}
\begin{split}
Nf(p) &= \int {\kappa^\sharp}(p,\theta) Xf(p,\theta)\,\d\sigma_p(\theta)\\
      & =  \int_{S_pM} \int  {\kappa^\sharp}(p,\theta) \kappa\left(\exp_p(t\theta), \dot \exp_p(t\theta)\right)f(\exp_p(t\theta)) \, \d t\, \d \sigma_p(\theta)\\     
      & = \int_{T_pM} W(p,v)f(\exp_p(v))\, \d\Vol(v),
\end{split}
\ee
where
\be{W2}
\begin{split}
W  &= |v|^{-n+1}\Big( {\kappa^\sharp}(p,v/|v|)\kappa \big(\exp_p(v), \dot \exp_p(v)/| v| \big) \\
& \qquad\quad \qquad +  {\kappa^\sharp}(p,-v/|v|)\kappa \big(\exp_p(v), -\dot \exp_p(v)/| v| \big)\Big). 
\end{split}
\ee

\begin{theorem}  \label{thm_PDO}
Let $\exp_p(v)$ satisfy (R1) and (R4) and assume that for any $(p,\theta)\in \supp {\kappa^\sharp}$, $t\theta$ is not a conjugate vector at $p$ for $t$ such that $\exp_p(t\theta)\in\supp f$. 
 Then $N$ is a classical \PDO\ of order $-1$ with principal symbol
\be{thm_PDO_eq1}
\sigma_p(N)(x,\xi)  = 2\pi \int_{S_xM}\delta(\xi(\theta))  ( {\kappa^\sharp}\kappa)(x,\theta)  \,\d\sigma_x(\theta),
\ee
where $\xi(\theta)=\xi_i\theta^j$, and $\delta$ is the Dirac delta function.

\end{theorem}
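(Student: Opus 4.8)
The plan is to reduce the statement to a local computation near the diagonal and then quote the standard theory of pseudodifferential operators arising in integral geometry without conjugate points. First I would observe that, by the hypothesis, for every $(p,\theta)\in\supp\kappa^\sharp$ the vector $t\theta$ is not conjugate at $p$ as long as $\exp_p(t\theta)\in\supp f$; hence the map $v\mapsto \exp_p(v)$ is a local diffeomorphism on the relevant part of $T_pM$, and in the representation \r{NStar}--\r{W2} we may change variables $q=\exp_p(v)$, writing $v=v(p,q)$, to obtain
\be{myplan1}
Nf(p) = \int K(p,q) f(q)\,\d q, \qquad K(p,q) = \frac{W(p,v(p,q))}{|\det \d_v\exp_p(v(p,q))|}.
\ee
The essential point is that, after this change of variables, $K(p,q)$ is smooth away from the set where $v(p,q)$ either vanishes or fails to be well defined, and, crucially, the singularity of $K$ on the diagonal $\{p=q\}$ is of the standard conormal type $|p-q|^{-(n-1)}$ coming from the factor $|v|^{-n+1}$ in \r{W2}, while the contribution from any conjugate points is excluded by hypothesis. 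So away from the diagonal the kernel is smooth, meaning $N$ differs from a properly supported operator with conormal singularity at the diagonal by a smoothing operator.

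Next I would analyze the diagonal singularity. Near $p=q$ one has $v(p,q)\to 0$, $\exp_p(v)\approx p+v$ by (R4), $\dot\exp_p(v)/|v|\to v/|v|=\theta$, and $\det\d_v\exp_p(v)\to 1$; so to leading order $K(p,q)\sim |p-q|^{-n+1}\,(\kappa^\sharp\kappa)(p,(q-p)/|q-p|)$ symmetrized in $\pm$, which is exactly the homogeneous-of-degree-$(-n+1)$ kernel whose Fourier transform in $p-q$ is homogeneous of degree $-1$. The cleanest way to extract the symbol is to follow the proof of Theorem~\ref{thm_Duke}(b) in \cite{SU-Duke} verbatim: that argument never used anything about $(M,g)$ beyond (R1) and (R4) on the support in question, and it produces a classical $\Psi$DO of order $-1$ with symbol given by the oscillatory integral $\int e^{-\i\xi\cdot v}|v|^{-n+1}(\cdots)\,\d v$, which evaluates to $2\pi\int_{S_xM}\delta(\xi(\theta))(\kappa^\sharp\kappa)(x,\theta)\,\d\sigma_x(\theta)$ by the standard Fourier-transform-of-a-homogeneous-distribution identity (the delta appearing because integration of a degree $-n+1$ homogeneous function against $e^{-\i\xi\cdot v}$ over the sphere directions leaves a degree $-1$ function supported, in the sense of the symbol's leading behaviour, on $\xi\perp\theta$). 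The only modification compared to \cite{SU-Duke} is that $\kappa^\sharp$ replaces $\bar\kappa$, which changes nothing in the computation.

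Concretely, the steps in order are: (1) invoke the no-conjugate-points hypothesis to change variables $q=\exp_p(v)$ and write $N$ as in \r{myplan1}; (2) show $K$ is smooth off the diagonal, so $N$ is a properly supported operator modulo $C^\infty$; (3) use (R1), (R4) to compute the asymptotics of $K$ near the diagonal and recognize the leading term as a classical conormal kernel of the right homogeneity; (4) apply the Fourier transform computation of \cite{SU-Duke}, Theorem~\ref{thm_Duke}, to identify $N$ as a classical $\Psi$DO of order $-1$ and read off \r{thm_PDO_eq1}; (5) note that the full symbol is classical because all the data ($\kappa$, $\kappa^\sharp$, $\exp$) are smooth, so the error terms in the expansion of $K$ contribute lower-order classical symbols. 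The main obstacle I anticipate is step (3): making rigorous that the subleading terms in the Taylor expansion of $\exp_p(v)$ and of $\det\d_v\exp_p(v)$ around $v=0$ produce only kernels of strictly lower conormal order, i.e. that $K$ has a genuine classical conormal expansion at the diagonal rather than merely a leading term — but this is routine given smoothness and is precisely what is carried out in the proof of Theorem~\ref{thm_Duke} in \cite{SU-Duke}, so in practice the proof will mostly consist of checking that that argument applies verbatim with $\kappa^\sharp$ in place of $\bar\kappa$ and with a general regular exponential map satisfying (R1) and (R4) in place of the geodesic one.
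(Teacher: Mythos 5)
Your overall plan coincides with the paper's: use the no-conjugate-points hypothesis to make the Schwartz kernel smooth away from the diagonal, exhibit a conormal singularity of order $-(n-1)$ at the diagonal coming from the factor $|v|^{-n+1}$ in \r{W2}, and then quote the standard computation identifying such kernels as classical \PDO s of order $-1$ with the delta-type principal symbol \r{thm_PDO_eq1}. (The fact that $v(p,q)$ in your \r{myplan1} need not be single valued — absence of conjugate points does not make $\exp_p$ injective — is harmless: off the diagonal each branch contributes a smooth kernel, and near the diagonal (R4) gives local injectivity.)

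There is, however, one step that fails in the stated generality, and it is precisely the point the paper's proof is built around. Under (R1) and (R4) alone, $\exp_p(v)$ is smooth in $(p,t,\theta)$ for small $|t|$ and in $(p,v)$ for $v\neq0$, but it need \emph{not} be smooth as a function of $v$ at $v=0$ (magnetic geodesics are the standard example). So your step (3)/(5), which rests on ``the Taylor expansion of $\exp_p(v)$ and of $\det\d_v\exp_p(v)$ around $v=0$'' and on the smoothness of $\exp$, has no basis as written; likewise, the claim that the proof of Theorem~\ref{thm_Duke}(b) applies ``verbatim'' is not quite right, since the representation in \cite{SU-Duke} is phrased via the distance function $\rho(p,q)$ and $\det\partial^2(\rho^2/2)/\partial p\,\partial q$, objects attached to a Riemannian metric that do not exist for a general regular exponential map. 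The paper's device (taken from \cite{FSU}) is to avoid the variable $v$ near the diagonal altogether: by (R4) one writes $\exp_x(t\theta)=x+t\,m(t,\theta;x)$ with $m$ smooth and $m(0,\theta;x)=\theta$, and changes variables $(t,\theta)\mapsto(r,\omega)$, $r=t|m|$, $\omega=m/|m|$ (a diffeomorphism for $|t|\ll1$, with $r$ allowed to be negative), so that
\[
Nf(x)=\int_{S_xM}\int A(x,r,\omega)\,f(x+r\omega)\,\d r\,\d\sigma_x(\omega)
\]
with a \emph{smooth} amplitude $A$; then \cite[Lemma~2]{FSU} applies directly and yields a classical \PDO\ of order $-1$ with principal symbol $2\pi\int_{S_xM}\delta(\xi(\omega))A(x,0,\omega)\,\d\sigma_x(\omega)$, which equals \r{thm_PDO_eq1} because the Jacobian equals $1$ at $r=0$. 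In other words, the conormal expansion you need is in powers of $r=\pm|q-p|$ with coefficients smooth in the direction, not a Taylor expansion in $v$; with that substitution your argument becomes the paper's proof.
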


\begin{proof}
The theorem is essentially proved in Section~4 of \cite{FSU}, where the exponential map is related to a geodesic like family of curves. We will repeat the arguments there in this more general situation.

Notice first that it is enough to study small enough $|t|$. 
Fix local coordinates $x$ near $p_0$. 
By (R4), 
\[
\exp_x(t\theta)= x+tm(t,\theta;x), \quad m(0,\theta;x)=\theta,
\]
with a smooth function $m$ near $(0,\theta_0,p_0)$. Introduce new variables $(r,\omega)\in\R\times S_xM$ by 
\[
r= t|m(t,\theta;x)|, \quad \omega = m(t,\theta;x)/|m(t,\theta;x)|,
\]
where $|\cdot|$ is the norm in the metric $g(x)$. Then $(r,\omega)$ are polar coordinates for $\exp_x(t\theta)-x=r\omega$ with $r$ that can be negative, as well, i.e., 
\[
\exp_x(t\theta) = x+r\omega.
\]
The functions  $(r,\omega)$ are clearly smooth got $|t|\ll1$, and $x$ close to $p_0$. Let 
\[
J(t,\theta;x) = \det \d_{t,v}(r,\omega)
\]
be the Jacobi determinant of the map $(t,v)\mapsto (r,\omega)$. By (R4), $J|_{t=0}=1$, therefore that map is a local diffeomorphism from $(-\eps,\eps)\times S_xM$ to its image for $0<\eps\ll1$. It is not hard to see that  for $0<\eps\ll1$ it is also a global diffeomorphism, because it is clearly injective. Let $t=t(x,r,\omega)$, $\theta=\theta(x,r,\omega)$ be the inverse functions defined by that map. Then
\[
t=r+O(|r|), \quad \theta = \omega+O(|r|), \quad \dot\exp(t\theta) = \omega+ O(|r|).
\]
Assume that the weight $\kappa$ in \r{1.2} vanishes for $p$ outside some small neighborhood of $p_0$. Then after a change of variables, we get
\[
Nf(x) = \int_{S_xM}\int A(x,r,\omega) f(x+r\omega) \, \d r\d\sigma_x(\omega),
\]
where
\[
A(x,r,\omega)=  {\kappa^\sharp}(x,\theta(x,r,\omega)) \kappa( x+r\omega,\omega+rO(1))  J^{-1}(x,r,\omega)
\]
with $J$ as before, but written in the variables $(x,r,\omega)$. By \cite[Lemma~2]{FSU}, $N$ is a classical \PDO\  with a principal symbol
\be{7.2}
2\pi\int_{S_xM}\delta(\xi(\omega)) A(x,0,\omega)\, \d\sigma_x(\omega) = 
2\pi\int_{S_xM}\delta(\xi(\omega))  {\kappa^\sharp}(x,\omega)\kappa(x,\omega)\, \d\sigma_x(\omega).
\ee
\end{proof}

\begin{remark}\label{remarkW} 
Formulas \r{Xstar2} and \r{NStar} are valid regardless of possible conjugate points. In our setup, the supports of $\kappa$, $\kappa^\sharp$ guarantee that $\exp_p(t\theta)$, for $(p,\theta)$ close to $(p_0,\theta_0)$ reaches a conjugate point for $t>0$ but not for $t<0$. Therefore, near the conjugate point $q$ of $p$, the second term on the r.h.s.\ of \r{W}, and \r{W2}, respectively, vanishes. 
\end{remark}

\section{The Schwartz kernel of $N$ near the conjugate locus $\Sigma$} \label{sec_diag}

We will introduce first three invariants. Let $F:M\to N$ be a smooth orientation preserving map between two orientable Riemannian manifolds $(M,g)$ and $(N,h)$. Then one defines $\det \d F$ invariantly by
\be{IF}
F^* (\d\Vol_N) =   (\det \d F)\,  \d\Vol_M,
\ee
see  also \cite[X.3]{Lang_manifolds}. In local coordinates, 
\be{dF}
\det \d F (x)= \sqrt{\frac{\det h(F(x))}{\det g(x)}}\det \frac{\partial F(x)}{\partial x} .
\ee

We choose an orientation of $S(p_0)$ near $v_0$, as a surface in $T_{p_0}M$  by choosing a unit normal field so that the derivative of $\det \d \exp_{p_0}(v)$ along it is positive on $S(p)$. Then we extend this orientation to $S(p)$ for $p$ close to $p_0$ by continuity. On Figure~\ref{fig:caustics_multidim}, the positive side is the one below $S(p)$, if $v$ is the first conjugate vector along the geodesic through $(p,v)$. Then we choose an orientation of $\Sigma(p)$ so that the positive side is that in the range of $\exp_p$. On Figure~\ref{fig:caustics_multidim}, the positive side is to the left of $\Sigma(p)$. 
The so chosen orientations conform with the signs of $\xi^n$ and $y^n$ in the normal form \r{2.1}.

Next we synchronize the orientations of $T_pM$ and $M$ near $q$ by postulating that $\exp_p$ is an orientation preserving map from the positive side of $S(p)$, as described above, to the positive side of $\Sigma(p)$.
%, that on Figure~\ref{fig:caustics_multidim}, stays to the left of $\Sigma(p)$. 

For each $p\in M$, the transformation laws in $TT_pM$ under coordinate changes on the base show that $T_pM$ has the natural structure of a Riemannian manifold with the constant metric $g(p)$. 
Then one can define $\det \d\exp_p$ invariantly as above. Let $\d\Vol_p$ be the volume form in $T_pM$, and let $\d\Vol$ be the volume form in $M$. Then $\det \d\exp_p$ is defined invariantly by
\be{I1}
\exp_p^* \d\Vol =   \left(\det \d\exp_p\right)  \d\Vol_p.
\ee
In local coordinates, 
\[
\det \d\exp_p = \sqrt{\frac{\det g( \exp_p v  )}{\det g(p)}}\det \frac{\partial}{\partial v} \exp_p (v),
\]
where, with some abuse of notation, $g(p)$ is the metric $g$ in fixed coordinates near a fixed $p_0$, and $g( \exp_p v  )$ is the metric $g$ in a possibly different system of fixed coordinates near  $q_0 =\exp_{p_0}v_0$. Set
\be{inv1}
A(p,v) := |\d \det \d \exp_p(v)|.
\ee
Since $\det \d \exp_p(v)$ is a defining function for $S(p)$, its differential  is conormal to it. 
By the fold condition, $A\not=0$. One can check directly that $A$ is invariantly defined on $\Sigma$.  

By \r{2.1n}, for $(p,v)\in S$, the differential of $\exp_p$ maps isomorphically $T_vS(p)$ (equipped with the metric on that plane induced by $g(p)$) into $T_q\Sigma$, with the induced metric. 
Let $D$ be the  determinant of $\exp_p|_{S(p)}$, i.e.,
\be{inv2}
D := \det \left( \d \exp_p |_{T_vS(p)} \right),
\ee
defined invariantly by \r{IF}. We synchronize the orientations of $S(p)$ and $\Sigma(p)$ so that $D>0$.

We express next the weight $W(p,v)$ restricted to $S$ in terms of the variables $(p,q)$. For $(p,q)\in \Sigma$, $v=\exp^{-1}_p(q)$, where we inverted $\exp_p$ restricted to $S$. Let $w=w(p,q)$ be defined as in \r{qw} with $v$ as above. 
Then we set, see also \r{W2}, and Remark~\ref{remarkW},
\begin{equation}\label{WS}
W_\Sigma(p,q) := W\big((p,\exp^{-1}_p(q))|_\Sigma
 = |v|^{1-n} {\kappa^\sharp}(p,v/|v|)\kappa(q,-w/|v|)
\end{equation}

For $p$ close to $p_0$, $\Sigma(p)$ divides $M$ in a neighborhood of $q_0$ into two parts: one of them  is in the range of $\exp_p(v)$ for $v $ near $v_0$, that is the positive one w.r.t.\ the chosen orientation;  the other is not.  Let $z'(p,q)$ be the distance from $q$ to $\Sigma(p)$ with a positive sign in the first region, and with a negative sign in the second one. Then for a fixed $p$, $z'=z'(p,q)$ is a normal coordinate to $\Sigma(p)$ depending smoothly on $p$, and $\Sigma$ is given locally by $z'=0$. Then $z'$ is a defining function for $\Sigma$, i.e., $\Sigma=\{z'=0\}$ and $\d_{p,q}z'\not=0$ because $\d_qz'\not=0$. 
Let $z''=z''(p,q)\in \R^{2n-1}$ be such that its differential restricted to $T\Sigma$ is an isomorphism at $(p_0,q_0)$.  Since $\d z''$ and $\d z'$ are linearly independent, $z=z(z',z'')$ are coordinates near $(p_0,q_0)$. One way to construct $z''$ is the following. Choose $(z_{n+1},\dots,z_{2n})$, depending on $p$ only, to be local coordinates for $p$, and to choose $(z',z_2,\dots,z_{n})$, depending on $p$ and $q$, to be semi-geodesic  coordinates of $q$ near $\Sigma(p)$. 

The next theorem shows that near $\Sigma$, the operator $N$ has a singular but integrable kernel with a conormal singularity of the type $1/\sqrt{z'}$. 

\begin{theorem}\label{thm_kernel} Near $\Sigma(p)$, 
the Schwartz kernel $N(p,q)$ of $N$ (with respect to the volume measure)  near $(p_0,q_0)$ is of the form
\be{thm_k1}
N = W_\Sigma \frac{\sqrt2}{\sqrt{AD z'}}( 1+ \sqrt{z'} R(\sqrt{z'},z'')),
\ee
where $W_\Sigma= W_\Sigma(z'')$, $A=A(z'')$, $D=D(z'')$,  and  $R$ is a smooth function.
\end{theorem}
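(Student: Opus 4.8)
The plan is to start from the explicit representation of the Schwartz kernel of $N$ given by \r{NStar}, \r{W2}, namely
\[
Nf(p) = \int_{T_pM} W(p,v) f(\exp_p(v))\, \d\Vol(v),
\]
and push it forward under the map $v\mapsto q=\exp_p(v)$. Near $v_0$ this map is two-to-one onto the positive side of $\Sigma(p)$ and not defined on the negative side (this is exactly the content of the normal form \r{2.1}: $y'=\xi'$, $y^n=(\xi^n)^2$), so the change of variables introduces the Jacobian $\det\d_v\exp_p(v)$, which vanishes on $S(p)$. First I would work in the coordinates $(z',z'')$ constructed just before the statement: $z'$ is the signed distance to $\Sigma(p)$ (positive in the range of $\exp_p$), and $z''$ parametrizes $\Sigma$. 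The key local computation is that in the normal-form coordinates the equation $\exp_p(v)=q$ is solved by $\xi^n=\pm\sqrt{y^n}$, and $y^n$ is, up to a smooth nonvanishing factor, $z'$; so for each $q$ on the positive side there are two preimages $v_\pm$ with $\xi^n=\pm\sqrt{z'}+\dots$, collapsing to a single point on $S(p)$ as $z'\to0^+$.

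The second step is to identify the two invariants $A$ and $D$ as precisely the two factors controlling the degeneracy of the Jacobian. Since $\det\d_v\exp_p(v)$ is a defining function for $S(p)$ with nonzero conormal differential of size $A=|\d\det\d_v\exp_p(v)|$ (invariant by the discussion around \r{inv1}), and since $\exp_p$ restricted to $S(p)$ is a diffeomorphism onto $\Sigma(p)$ with Jacobian $D$ (see \r{inv2} and Theorem~\ref{thm_F}(a), which gives $T_q\Sigma(p)=\mathrm{Im}\,\d\exp_p(v)$), a Taylor expansion of $\exp_p$ transverse to $S(p)$ shows that along the normal direction the map behaves like $\xi^n\mapsto (\xi^n)^2$ with the quadratic coefficient and the tangential scaling being exactly $A$ and $D$. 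Concretely, writing $\nu$ for the chosen unit normal to $S(p)$ and $s$ for the coordinate along it, one has $\det\d_v\exp_p = A\, s + O(s^2)$ and, transverse to $\Sigma(p)$, $z' = c\, s^2 + O(s^3)$ where the constant $c$ is determined by $A$ (via $\d\det\d_v\exp_p$) and by the Hessian $\d^2\exp_p$ restricted appropriately; keeping track of the $D$ factor from the tangential part of $\exp_p|_{S(p)}$, the full Jacobian of $v\mapsto q$ near $S(p)$ is $\sim AD\,s \sim AD\sqrt{z'/c}$. Inverting, $\d\Vol(v) = (\text{Jacobian})^{-1}\d q$ contributes $\sim (AD\sqrt{z'})^{-1}$, and summing the two sheets $v_\pm$ doubles the leading term — this is the source of the constant $\sqrt2$ (one $\sqrt2$ from $\sqrt{2}$-type bookkeeping of the $(\xi^n)^2$ substitution, combined with the factor $2$ from the two preimages and the $1/\sqrt{z'}$ from $\d\xi^n = \d\sqrt{y^n} = (2\sqrt{y^n})^{-1}\d y^n$). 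The weight $W(p,v)$ evaluated at either preimage tends, as $z'\to0^+$, to $W_\Sigma(p,q)$ as defined in \r{WS}, because by Remark~\ref{remarkW} only the first term of \r{W2} survives near $\Sigma$ and it is continuous up to $S$; the difference $W(p,v_+)-W(p,v_-)$ is $O(\xi^n)=O(\sqrt{z'})$ and feeds into the remainder. Assembling: the leading term of $N(p,q)$ is $W_\Sigma\sqrt2/\sqrt{ADz'}$, and the corrections from the higher-order terms in the expansions of $\exp_p$, of $z'$ in $s$, and of $W$, together with the averaging of the two sheets, are all smooth functions of $\sqrt{z'}$ and $z''$, giving the factor $(1+\sqrt{z'}R(\sqrt{z'},z''))$. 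One must also check that odd powers of $\xi^n$ in the pulled-back integrand, after summing $v_+$ and $v_-$, contribute only to $\sqrt{z'}R$ and do not spoil the $1/\sqrt{z'}$ form; this works because the sum over the two sheets symmetrizes, turning $\xi^n$-odd terms into $z'$-integer contributions and $\xi^n$-even terms into the $\sqrt{z'}$-type terms.

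The main obstacle I expect is the careful bookkeeping of constants and orientations in the transverse expansion — in particular, verifying that the quadratic coefficient relating $z'$ to the normal coordinate $s$ on $S(p)$ is exactly $A/(2D)$-compatible so that the product $AD$ (and not some other combination) appears under the square root, and pinning down the numerical constant $\sqrt2$ rather than $2$ or $1$. This requires being precise about: (i) the synchronization of orientations of $S(p)$, $\Sigma(p)$, $T_pM$ and $M$ chosen before the statement so that $D>0$ and $z'>0$ on the correct side; (ii) the invariant definitions \r{IF}, \r{I1} of $\det\d F$ so that $A$, $D$ are genuinely the factors of $\det\d_v\exp_p$ near $S$; and (iii) the relation $\det\d_v\exp_p(v) = (\text{nonzero smooth})\cdot\xi^n$ and $y^n=(\xi^n)^2$ in the normal form, from which $\det\d_v\exp_p \sim \pm\sqrt{y^n}$ up to the $D$-scaling. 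Everything else — smoothness of the remainder, integrability of $1/\sqrt{z'}$, the reduction to small $|t|$ and to a neighborhood with no other conjugate points — is routine given the normal form \r{2.1} and the earlier structural results (Lemma~\ref{lemma_2.1}, Theorem~\ref{thm_F}).
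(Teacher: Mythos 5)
Your route is essentially the paper's: change variables $q=\exp_p(v)$ in \r{NStar}, split the integral into the two sheets of the fold, extract the singularity from the vanishing Jacobian, identify the rate of vanishing through $A$ of \r{inv1} and the tangential Jacobian $D$ of \r{inv2}, and let the evaluation of $W$ on $S$ produce $W_\Sigma$ (with only one term of \r{W2} surviving, as in Remark~\ref{remarkW}), the two preimages $v_\pm$ symmetrizing the odd powers of the normal coordinate into the remainder $R$. All of that matches the paper's proof.

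The one step that carries the actual content of \r{thm_k1} --- why the combination $AD$ and the constant $\sqrt2$ --- is, however, not carried out correctly as written, and it is exactly the point you flag as unresolved. The Jacobian of $v\mapsto q$ is $\det\d_v\exp_p(v)=A\,v^n\,(1+O(v^n))$, where $v^n$ is the normal coordinate to $S(p)$ in $T_pM$; it is \emph{not} $\sim AD\,v^n$. The factor $D$ does not multiply the determinant: it enters only through the relation between $v^n$ and $z'$. Taken literally, your chain (Jacobian $\sim AD\,s$, $z'=c\,s^2$ with $c$ ``determined by $A$ and the Hessian'') would put $\sqrt{AD^3z'}$, or some other unidentified combination, in the denominator; the stated answer $\sqrt2\,W_\Sigma/\sqrt{ADz'}$ is then asserted rather than derived. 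The clean way to pin both $c$ and the determinant, which is what the paper does, is a volume-form comparison rather than a Hessian computation: from $\d\Vol(y)=\det(\d_v\exp_p)\,\d\Vol(v)$, written in boundary normal coordinates to $\Sigma(p)$ and to $S(p)$, together with $\d\Vol_{\Sigma(p)}(y')=D\,\d\Vol_{S(p)}(v')$, one gets $D\,\d z'=A\,v^n(1+O(v^n))\,\d v^n$, hence $z'=\frac{A}{2D}(v^n)^2(1+O(v^n))$, and substituting back, $\det\d_v\exp_p=\pm\sqrt{2ADz'}\,\big(1+O_\pm(\sqrt{z'})\big)$ as in \r{4.3}. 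Each sheet then contributes $W_\Sigma/\sqrt{2ADz'}$ to the kernel, and the sum over the two sheets gives $2/\sqrt{2ADz'}=\sqrt2/\sqrt{ADz'}$, which is where both the factor $AD$ and the constant $\sqrt2$ come from. With this substitution your argument becomes the paper's proof; without it the key constants in \r{thm_k1} are unjustified.
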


\begin{proof}
We start with the representation \eqref{NStar}. We will make the change of variables $y=\exp_p(v)$ for $(p,v)$ close to $(p_0,v_0)$ as always. Then $y$ will be on the positive side of $\Sigma(p)$, and the exponential map is 2-to-1 there. We split the integration in \r{NStar} in two parts: one, where $v$ is on the positive side of $S(p)$, that we call $N_+f$, and the other one we denote by $N_-f$. Then
\be{g1}
\begin{split}
N_\pm f(p) &= \int_{S_pM} \int W f(y) \big(\det \,\d \exp_p^\pm (v)\big)^{-1}  \,\d\Vol(y),
\end{split}
\ee
where $W$ is as in \r{WS} but not restricted to $\Sigma$, and $(\exp_p^\pm)^{-1}$ there is the corresponding inverse in each of the two cases. 

To prove the theorem, we need to analyze the singularity of the Jacobian determinant $\det \,\d \exp_p(v)$ near $\Sigma(p)$. It is enough to do this at $(p_0,v_0)$. 

Let $y=(y',y^n)$ be semi-geodesic coordinates near $\Sigma(q_0)$, $q_0=\exp_{p_0} (v_0)$, and let $y_0$ correspond to $q_0$. We assume that 
$y^n>0$ on the positive side of $\Sigma(p)$. In other words, $y^n=z'(p_0,q)$.

We have
\[
 \d\Vol(y)   =    \det \big(\d_v \exp_p(v)\big)\, \d \Vol(v)
\]
The form on the left can be written as $\d\Vol_{\Sigma(p)}(y')\, \d y^n$; while the one on the right, restricted to $S(p)$, equals $\d \Vol_{S(p)}(v')\,\d v^n$ in boundary normal coordinates to $S(p)$, where $v^n>0$ gives the positive side of $S(p)$.  On the other hand, by \r{inv2}, 
\[
 \d\Vol_{\Sigma(p)}(y')   =   D\,  \d \Vol_{S(p)}(v').
\]
We therefore get 
\[
D\, \d y^n =\det \big(\d \exp_p(v)\big)\,\d v^n. 
\]
By the definition of $A$, we have 
\be{4.02}
\det\d_v \exp_p(v)= Av^n(1+O(v^n)).
\ee 
Therefore,
\[
D\, \d y^n = A(1+O(v^n))\, v^n\d v^n.
\]
Since $y^n=0$ for $v^n=0$, we get
\[
y^n = (v^n)^2\frac{A  }{2D} (1+O(v^n)).
\]
Solve this for $v^n$ and plug into \r{4.02} to get
\be{4.3}
\det \d \exp_p(v)= \pm \sqrt{2ADy^n}\left(1+O_\pm\big(\sqrt{y^n}\big)\right).
\ee
Here $O_\pm\big(\sqrt{y^n}\big)$ denotes a smooth function of $\sqrt{y^n}$ near the origin with coefficients smooth in $y'$, that vanishes at $y^n=0$. 
The positive/negative sign corresponds to $v$ belonging to the positive/negative side of $S(p)$. 
By \r{g1}, 
\be{4.4}
N_\pm f(p) = \int W f(y) \frac{1}{\sqrt{2ADy^n}}\left(1+O_\pm\big(\sqrt{y^n}\big)\right)   \d\Vol(y).
\ee
We replace $A_0$, $D_0$ in \r{4.4} by their values at $y^n =0$; the error will then just replace the remainder term above by another one of the same type.  Similarly, $W=W(p,v)$, where $\exp_p(v)=q$. Solving the latter for $v=v(p,q)$ provides a function having a finite Taylor expansion in powers of $\sqrt{y^n}$ of any order, with smooth coefficients. The leading term is what we denoted by $W_\Sigma$ that is a smooth function on $\Sigma$.

With the aid of  \r{dF}, it is easy to see that \r{4.4} is a coordinate representation of the formula \r{thm_k1} at the so fixed $p$. When $p$ varies near $p_0$, it is enough to notice that since we already wrote the integral in invariant form, $y^n$ then becomes the function $z'(p,q)$ introduced above. For $z''$ we then have $z''(p,q)= (x(p),y'(p,q))$. Finally, we note that another choice of $z''$ so that $(z',z'')$ are coordinates would preserve \r{thm_k1} with a possibly different $R$. 
\end{proof}

\section{$N$ as a Fourier Integral Operator. Proof of Theorem~\ref{thm_main}}

We are ready to finish the proof of Theorem~\ref{thm_main}. By Theorem~\ref{thm_kernel}, near $\Sigma$, the Schwartz kernel of $N$ has a conormal singularity at $\Sigma$, supported on one side of it, that admits a singular expansion in powers of $\sqrt{z'_+}$, with a leading singularity $1/\sqrt{z'_+}$. The Fourier transform of the latter is 
\be{FTz}
\sqrt{\pi}e^{-\i\pi/4}(\zeta_+^{-1/2}+ \i \zeta_-^{-1/2})
\ee
where $\zeta_+=\max(\zeta,0)$, $\zeta_-=(-\zeta)_+$. The singularity near $\zeta=0$ can be cut off, and we then get a symbol of order $-1/2$, depending smoothly on the other $2n-1$ variables. Therefore, near $\Sigma$, the kernel of $N$ belongs to the conformal class $I^{-n/2}(M\times M,\Sigma;\mathbf{C})$, see e.g., \cite[18.2]{Hormander3}. It is elliptic when $ {\kappa^\sharp}(p_0,\theta_0) {\kappa^\sharp}(q_0,-w_0)\not=0$ by \eqref{W2}, \r{WS}. Therefore, the kernel of $N$ near $\Sigma$ is a kernel of an FIO associated to the Lagrangian $T^*\Sigma$. Moreover, the amplitude of the conormal singularity at $\Sigma$ is in the class $S_{\rm phg}^{-1/2,1/2}$ (polyhomogeneous of order $-1/2$, having an asymptotic expansion in integer   powers of $|\zeta|^{1/2}$), see also \r{sin3a} and \r{sin3a1}.

\section{The two dimensional case}
\begin{theorem}\label{thm_2D}
Let $\dim M=2$. Assume that (R1) -- (R5) are fulfilled. Then $\mathcal{N}^*\Sigma\setminus 0$, near $(p_0,\xi_0,q_0,\eta_0)$, is the graph of a local diffeomorphism $T^*M\setminus 0 \in (p,\xi) \mapsto (q,\eta)\in  T^*M\setminus 0$, homogeneous of order one in its second variable (a canonical graph). 
\end{theorem}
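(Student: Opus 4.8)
The plan is to exploit the explicit description of $\mathcal{N}^*\Sigma$ from Theorem~\ref{thm_F}(b), specialized to $n=2$, together with the symmetry between $p$ and $q$ established in Lemma~\ref{lemma_2.2} and Theorem~\ref{thm_F}(a). In two dimensions the conjugate locus $S(p)$ is a curve, $N_p(v)$ is one-dimensional, and $\Coker \d_v\exp_p(v)$ is one-dimensional; so for each conjugate pair $(p,q)\in\Sigma$ the fiber of $\mathcal{N}^*\Sigma$ over $(p,q)$ in the $q$-variable is the line $N_q^*\Sigma(p) = \Coker \d_v\exp_p(v)$, and once $\eta$ is chosen there, $\xi = \eta_i\,\partial \exp_p^i(v)/\partial p$ is determined. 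Since $\Sigma$ is a smooth $(2n-1)=3$-dimensional hypersurface in the $4$-dimensional $M\times M$ (Theorem~\ref{thm_F}(b)), $\mathcal{N}^*\Sigma\setminus 0$ is a smooth $4$-dimensional conic Lagrangian in $T^*(M\times M)$, and the two projections $\pi_L:\mathcal{N}^*\Sigma\to T^*M$ (onto the $p$-factor, with sign flip as in $\mathcal{C}$) and $\pi_R$ (onto the $q$-factor) are maps between $4$-manifolds. The whole statement reduces to showing one of these projections, say $\pi_R$, is a local diffeomorphism; homogeneity of order one in the fiber variable is automatic since $\mathcal{N}^*\Sigma$ is conic and the symbol relations are linear in $(\xi,\eta)$.

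First I would set up coordinates using the normal form \eqref{2.1}: near $v_0$, $S(p_0)=\{\xi^n=0\}$ (here $n=2$, so $\xi^2=0$) and $\Sigma(p_0)=\{y^2=0\}$, with $\exp_{p_0}$ given by $y^1=\xi^1$, $y^2=(\xi^2)^2$; then let $p$ vary. Using $z=(z',z'')$ coordinates on $M\times M$ adapted to $\Sigma$ as in Section~\ref{sec_diag} ($\Sigma=\{z'=0\}$, $z''$ coordinates along $\Sigma$, with $z''=(x(p),y'(p,q))$), the conormal bundle is $\mathcal{N}^*\Sigma = \{(z,\zeta): z'=0,\ \zeta''=0\}$, coordinatized by $(z'',\zeta')$ with $\zeta'\in\R\setminus 0$ — manifestly a smooth conic Lagrangian of dimension $2n-1+1 = 4$. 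Next I would compute $\pi_R$ in these coordinates: $q$ is recovered from $z''$ (it determines $y'(p,q)$ given $p$, hence $q\in\Sigma(p)$), and $\eta$ from $(\zeta',z'')$ via formula \eqref{2.5cn}; the claim is that $(z'',\zeta')\mapsto (q,\eta)$ has invertible differential. The key computation is that $\partial\eta/\partial\zeta'\neq 0$, which holds because $\eta$ depends linearly and nontrivially on the single free parameter $\zeta'$ (the $1$-dimensional cokernel direction scales with $\zeta'$), and that the $q$-component together with the remaining $z''$-directions is a submersion onto a neighborhood of $(q_0,\eta_0)$ in $T^*M$ — here one uses that as $p$ ranges over its $2$-dimensional neighborhood and $\zeta'$ over $\R\setminus 0$, the pair $(q,\eta)$ sweeps out a full neighborhood in the $4$-dimensional $T^*M\setminus 0$.

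The main obstacle will be verifying injectivity/surjectivity of $d\pi_R$ cleanly, i.e. showing $\pi_R$ (equivalently $\pi_L$) is nondegenerate at $(p_0,\xi_0,q_0,\eta_0)$ without an assumption like \eqref{cond}. In higher dimensions this is exactly the canonical-graph condition that Lemma~\ref{lemma_graph} and part (c) of the last theorem show is equivalent to \eqref{cond} and can genuinely fail; the point of the two-dimensional statement is that \eqref{cond} is vacuous when $\dim N_{p_0}(v_0)=1$ and $\dim T_{v_0}S(p_0)=n-1=1$, since a $1\times 1$ matrix built from $\d^2\exp_{p_0}(v_0)(\alpha,\cdot)|_{T_{v_0}S(p_0)}$ is "of full rank" precisely when it is nonzero, and that nonvanishing is forced by the fold condition (R2) together with \eqref{2.8.6} — the same mechanism that makes $F_{22}(v_0)\notin\mathrm{span}(F_1(v_0))$ in the proof of Lemma~\ref{lemma_graph}. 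So the plan is: invoke Theorem~\ref{thm_F} to get $\mathcal{N}^*\Sigma$ smooth Lagrangian of the right dimension; invoke Lemma~\ref{lemma_graph} (or re-derive its one-line $n=2$ version) to see that condition \eqref{cond} is automatic here, hence by Lemma~\ref{lemma_graph} the map $v\mapsto\alpha(p,v)$ is a local diffeomorphism smoothly in $p$; translate this, via \eqref{2.5cn}, into the statement that the right projection of $\mathcal{N}^*\Sigma$ is a local diffeomorphism; and finally note conicity gives the homogeneity-one property. I would close by remarking that (R5) is what guarantees the map goes both ways (so it is a graph of a diffeomorphism, not merely an immersed Lagrangian), matching the hypothesis list (R1)--(R5).
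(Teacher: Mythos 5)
There is a genuine gap, and it sits exactly at the step you flag as the "main obstacle." Your plan reduces the theorem to Lemma~\ref{lemma_graph} by claiming that condition \eqref{cond} is automatic when $n=2$, "forced by (R2) together with \eqref{2.8.6}." That is not correct. In the notation of the proof of Lemma~\ref{lemma_graph}, \eqref{2.8.6} controls $F_{nn}(v_0)$, the second derivative taken twice in the kernel direction; condition \eqref{cond} in two dimensions is the nonvanishing of the mixed derivative $F_{12}(v_0)$ (kernel direction paired with a direction tangent to $S(p_0)$), and the fold hypothesis says nothing about that quantity — in the fold normal form \eqref{2.1} the corresponding mixed derivative vanishes identically, so there is no "mechanism" from (R2) alone. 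In the two–dimensional Riemannian case \eqref{cond} does hold, but for a different reason (the Gauss lemma identifies $N_p(v)$ with $v^\perp$, and regularity (R3) forces $S(p)$ to be transversal to the radial rays, so the radial projection restricted to $S(p)$ is a local diffeomorphism); your proposal neither states nor proves this. Moreover, the route "\eqref{cond} $\Rightarrow$ \eqref{LG} is a local diffeomorphism $\Rightarrow$ the projections of $\mathcal{N}^*\Sigma$ are local diffeomorphisms" conflates two different objects: \eqref{LG} and \eqref{cond} concern kernel vectors $\alpha\in N_p(v)$, which coincide with normal directions to $\Sigma$ only in the Riemannian case via \eqref{2.5} (Theorem on $\mathcal{N}\Sigma$, parts (a),(c)). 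Theorem~\ref{thm_2D} is stated under (R1)--(R5) only — e.g.\ for magnetic geodesics such as the unit–circle transform, where $N_q(w)\neq N_q\Sigma(p)$ — so even granting that \eqref{LG} is a local diffeomorphism you would not yet have the graph property of the \emph{conormal} bundle $\mathcal{N}^*\Sigma$.

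The ingredient your outline is missing (and which the paper's short proof uses) is the tangency of the geodesic to the conjugate locus: by Lemma~\ref{lemma_2.1}(a), with the roles of $p$ and $q$ swapped via Lemma~\ref{lemma_2.2} and (R5), $T_p\Sigma(q)$ is spanned by the geodesic direction $v$ when $n=2$, so a covector $\xi\in N^*_p\Sigma(q)$ is precisely one annihilating $v$. This is what pins everything down: from $(p,\xi)$ one takes the unit vector $\theta$ with $\xi(\theta)=0$ (correct branch), then $v\in S(p)$ as the intersection of the radial ray through $\theta$ with $S(p)$ — smooth because $S(p)$ is transversal to radial rays, which follows from (R3), not from \eqref{cond} — then $q=\exp_p(v)$, $w=-\dot\exp_p(v)$, and $\eta$ linearly in $\xi$ by \eqref{2.5cn}; homogeneity is the linearity, and (R5) produces the inverse map by symmetry. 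In your write-up this is exactly the unproved assertion that "$(q,\eta)$ sweeps out a full neighborhood" of $(q_0,\eta_0)$: without the tangency statement you have no identification of the admissible directions of $\xi$ over a given conjugate pair, and the dimension count alone does not give injectivity or surjectivity of $d\pi_L$, $d\pi_R$. So the proposal as written does not establish the theorem; replacing the appeal to Lemma~\ref{lemma_graph} by the two facts above (tangency of $\gamma$ to $\Sigma(q)$, and radial transversality of $S(p)$ from (R3)) would repair it and essentially reproduce the paper's argument.
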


\begin{proof}
For $(p,\xi)$ near $(p_0,\xi_0)$, there are exactly two smooth maps that map $\xi$ to a unit normal vector. We choose the one that maps $\xi_0$ to $v_0/|v_0|$. Then we map the latter to $v\in S(p)$. Since the radial ray through $v$ is transversal to $S(p)$, that map is smooth. Knowing $v$, then we can express $q=\exp_p(v)\in \Sigma(p)$ and $w = -\dot\exp_p(v)$ as smooth functions of $(p,\xi)$ as well. Then in local coordinates, $\eta = \xi_i\partial \exp_q^i(w)/\partial q$, see \r{2.5cn}, that in particular proves the homogeneity. 

By (R5), this map is invertible. 
\end{proof}

\begin{figure}[h] % float placement: (h)ere, page (t)op, page (b)ottom, other (p)age
  \centering
  \includegraphics[bb=0 0 534 384,width=3.66in,height=2.63in,keepaspectratio]{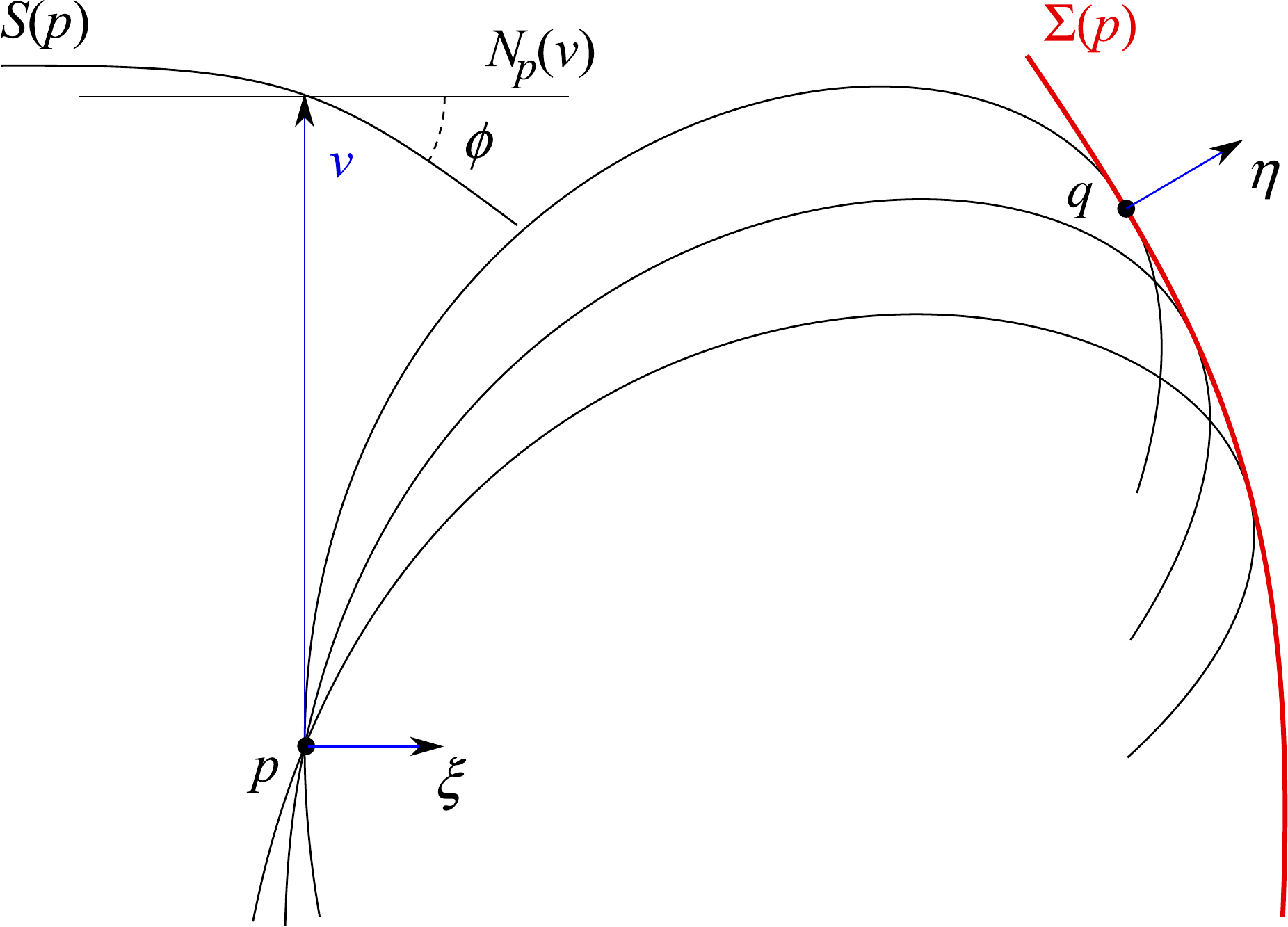}
  \caption{The 2D case}
  \label{fig_2}
\end{figure}

The principal symbol of $X^*X$ in the geodesics case, see Theorem~\ref{thm_Duke}, and \r{a2}, is given by 
\be{pr2D}
\sigma_{\rm p}(X^*X)(x,\xi) = 2\pi|\kappa(x,\xi^\perp/|\xi^\perp|)|^2,
\ee
where $\xi^\perp$ is a continuous choice of a vector field normal to $\xi$ and of the same length so that at $p=p_0$, $\xi_0^\perp/|\xi_0^\perp|=\theta_0$, $-\xi_0^\perp/|-\xi_0^\perp|=\theta_0$; therefore, the sign of the angle of rotation is different near $\xi_0$ and near $-\xi_0$. Notice that \r{a2} in the two dimensional case is a sum of two terms but we assumed that $\kappa$ is supported neat $(p_0,\theta_0)$, therefore only one of the terms is non-trivial. A similar remark applies to \r{thm_PDO_eq1}.  

Theorem~\ref{thm_kernel} takes the following form in two dimensions, in the Riemannian case.

\begin{corollary}  \label{cor_2D} Let $n=2$ and let $\exp$ be the exponential map of a Riemannian metric. 
With the notation of Theorem~\ref{thm_kernel},  we then have
\be{thm_k12d}
N = W_\Sigma \frac{\sqrt2}{\sqrt{B z'}}( 1+ \sqrt{z'} R(\sqrt{z'},z'')),
\ee
where 
\[
B  = \Big|\frac{\d}{\d N}  \det \,\d\, \exp_p(v)\Big|
\]
is evaluated at $v\in S(p)$ such that $q=\exp_p(v)$, and $\d/ \d N$ stands for the derivative in the direction of $N_p(v)$. 
\end{corollary}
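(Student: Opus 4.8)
\textbf{Proof proposal for Corollary~\ref{cor_2D}.}
The plan is to specialize Theorem~\ref{thm_kernel} to $n=2$ and to the Riemannian case, and to identify the product $AD$ appearing in \eqref{thm_k1} with the single invariant $B$. Recall $A(p,v) = |\d\det\d\exp_p(v)|$ is the norm of the differential of the defining function $\det\d\exp_p(v)$ of $S(p)$ (a covector conormal to $S(p)$), while $D = \det(\d\exp_p|_{T_vS(p)})$ is the Jacobian of the restricted exponential map from $S(p)$ to $\Sigma(p)$. When $n=2$, $S(p)$ is a curve in $T_pM$ and $N_p(v)$ is one-dimensional; and $\d/\d N$ in the statement is the directional derivative along a unit vector spanning $N_p(v)$.

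First I would observe that in dimension two the decomposition $T_vT_pM = T_vS(p)\oplus N_p(v)$ is one-dimensional plus one-dimensional, so once we fix a unit vector $\nu$ spanning $N_p(v)$ and a unit tangent $\tau$ spanning $T_vS(p)$, the number $A$ can be computed as $|\langle \d\det\d\exp_p(v),\nu\rangle|$, because the differential of $\det\d\exp_p(v)$ annihilates $T_vS(p)$ (it is conormal to $S(p)=\{\det\d\exp_p = 0\}$, by \eqref{inv1} and the remark following it). Hence $A = |\d_\nu \det\d\exp_p(v)| = |(\d/\d N)\det\d\exp_p(v)|$, which is exactly the quantity called $B$. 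So the content of the corollary reduces to showing $AD$ in Theorem~\ref{thm_kernel} may be replaced by $A$ alone, i.e.\ that the factor $D$ can be absorbed; more precisely, that the correct normalization of the conormal density, once we use the \emph{arclength} (Riemannian distance) coordinate $z'$ transverse to $\Sigma(p)$ and keep track of the metric on $S(p)$ versus $\Sigma(p)$, produces $B$ in place of $AD$.

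The key step is to re-examine the computation in the proof of Theorem~\ref{thm_kernel} in this low dimension. There $y^n = z'$ is the signed Riemannian distance to $\Sigma(p)$, and $v^n$ is the signed distance to $S(p)$ in $(T_pM,g(p))$; the relation derived is $D\,\d y^n = A(1+O(v^n))v^n\,\d v^n$, giving $y^n = (v^n)^2 A/(2D)(1+\cdots)$ and hence $\det\d\exp_p(v) = \pm\sqrt{2ADy^n}(1+O_\pm(\sqrt{y^n}))$. Feeding this into \eqref{g1} gives the kernel $W_\Sigma\sqrt2/\sqrt{ADz'}(1+\cdots)$. Thus literally Theorem~\ref{thm_kernel} already gives \eqref{thm_k12d} with $AD$ in place of $B$, and what remains is purely the claim $AD = B$, equivalently $D=1$ under the chosen synchronization, OR a reinterpretation of $A$: in fact the cleanest route is to note that in $n=2$ the restricted map $\d\exp_p|_{T_vS(p)}$ sends the unit tangent of $S(p)$ to a vector tangent to $\Sigma(p)$, and $D$ is precisely the length of that image vector; meanwhile the defining function $\det\d\exp_p$ transforms under this identification so that $A\cdot D$ (arclength on $S(p)$ paired against arclength on $\Sigma(p)$) collapses — when we measure $z'$ as Riemannian arclength on $M$ transverse to $\Sigma(p)$ and recompute $A$ as the transverse derivative of $\det\d\exp_p$ along $N_p(v)\subset T_vT_pM$, the Jacobian $D$ is exactly cancelled against the change of the transverse variable, leaving $B=|(\d/\d N)\det\d\exp_p(v)|$. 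I would make this precise by writing both $A$ and $D$ in the semigeodesic coordinates of the proof of Theorem~\ref{thm_kernel} and checking the product directly.

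The main obstacle I anticipate is bookkeeping the orientation/synchronization conventions (the positive side of $S(p)$ and of $\Sigma(p)$, and the unit normal $\nu$ spanning $N_p(v)$) so that the absolute value in $B$ matches the $\sqrt{\cdot}$ in \eqref{thm_k12d} with no spurious sign or factor; this is delicate because $A$, $D$ were defined with specific sign normalizations ($D>0$, $A=|\cdot|$) and one must verify that the combination $AD$ is genuinely $|(\d/\d N)\det\d\exp_p(v)|$ and not, say, $|(\d/\d N)\det\d\exp_p(v)|$ times an extra metric factor coming from the discrepancy between the flat metric $g(p)$ on $T_pM$ and the metric $g$ on $M$ near $q_0$. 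Once the normalizations are pinned down, the corollary is immediate from Theorem~\ref{thm_kernel}; I would therefore spend the bulk of the argument on this identification and keep the rest as a one-line specialization.
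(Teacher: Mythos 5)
Your reduction is right up to one point: Theorem~\ref{thm_kernel} gives the kernel with the factor $AD$, the coordinate $z'$ there is already the Riemannian distance to $\Sigma(p)$, and so the whole content of the corollary is the single identity $AD=B$. The genuine gap is in the step where you assert $A=|\langle \d\det\d\exp_p(v),\nu\rangle|=B$. Conormality of $\d\det\d\exp_p(v)$ to $S(p)$ only says that this covector annihilates $T_vS(p)$; pairing it with a \emph{unit} vector $\nu$ spanning $N_p(v)$ recovers the full norm $A$ only if $\nu$ is orthogonal to $S(p)$. In the fold situation $N_p(v)$ is merely transversal to $S(p)$: if $\phi$ denotes the acute angle between $N_p(v)$ and $T_vS(p)$, then $B=|\d_\nu\det\d\exp_p(v)|=A\sin\phi$, not $A$. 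Consequently the program you then set up --- showing that $D$ ``cancels against the change of the transverse variable'' so that $AD$ may be replaced by $A$ --- cannot succeed: no further change of variable is available (the transverse variable is already arclength in Theorem~\ref{thm_kernel}), and $D\neq 1$ in general. Your two inaccuracies happen to compensate in the final formula, but neither step is justified as written, and an attempt to prove either one separately would fail.

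What is actually true, and what the paper proves, is that $D=\sin\phi$, so that $AD=A\sin\phi=B$ with no cancellation needed. This is exactly where the Riemannian hypothesis enters, and it is absent from your argument: by the Gauss lemma $N_p(v)$ is orthogonal to the radial direction, so one can take the orthonormal frame $(v/|v|,\xi)$ at $v$ with $\xi$ spanning $N_p(v)$, parallel transport it along $\gamma_{p,v}$, and compute $\d\exp_p(v)=\mathrm{diag}(-1,j/|v|)$ as in \r{DE}; on $S(p)$ this becomes $\mathrm{diag}(-1,0)$, so the image of a unit tangent vector of $S(p)$ (which makes angle $\phi$ with $\xi$, hence has radial component of length $\sin\phi$) has length $\sin\phi$, i.e.\ $D=\sin\phi$ by \r{inv2}. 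Combined with $B=A\sin\phi$ from the conormality argument above, this gives $AD=B$ and hence \r{thm_k12d}. So the missing ingredients are precisely the computation of $D$ in the adapted parallel frame and the $\sin\phi$ factor in the directional derivative; with those supplied (and the orientation conventions you correctly flag, which fix the signs but not the magnitude), the corollary does follow from Theorem~\ref{thm_kernel} as you intended.
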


\begin{proof}
Note first that $B\not=0$ by the fold condition. Let $\phi$ be the (acute) angle between $S(p)$ and $N_p(v)$ at $v$. Since $N_p(v)$ is orthogonal to the radial ray at $v$, we can introduce an orthonormal coordinate system at $v$ with the first coordinate vector being $v/|v|$, and the second one: the positively oriented  unit vector along $N_p(v)$, that we call $\xi$. 
Let us parallel transport this frame along the geodesic $\gamma_{p,v}$; and invert the direction of the tangent vector to conform with our choice of $w$ at $q$. In particular, this introduces a similar coordinate system near the corresponding vector $w$ at $q$ in the conjugate locus.  
In these coordinates then
\be{DE}
\d \exp_p(v) = \begin{pmatrix}-1 &0\\0 &j/|v|\end{pmatrix},
\ee
where $j$ is uniquely determined by $J(t)=j(t)\Xi(t)$, where $J(t)$ is the Jacobi field with $J(0)=0$, $J'(0)=\xi$, and  $\Xi(t)$ is the parallel transport of $\xi$, compare that with \r{JJ}. The extra factor $1/|v|$ comes from the fact that we normalize $v$ now in our basis, so that the result would be the Jacobian determinant. 
Then the Jacobi determinant $\det\d \exp_p(v) $ is given by $-j/|v|$.  In particular, for $(p,v)\in S$ we have $\d \exp_p(v)= \mbox{diag}(-1,0)$. Note that $j$ depends on $v$ as well, therefore its differential that essentially gives  $\d \det\d \exp_p(v) $ depends on the properties of the Jacobi field under a variation of the geodesic.

Now, it easily follows from the definition \r{inv2} of $D$ that 
\[
D = \sin\phi.
\]
On the other hand, $\d \det \d \exp_p(v)$ is conormal to $S(p)$, therefore, the derivative of $\det \d \exp_p(v)$ in the direction of $N_p(v)$ satisfies
\[
\Big|\frac{\d}{\d N}  \det \,\d\, \exp_p(v)\Big| = |\d \det \d \exp_p(v)|\sin\phi =A\sin\phi=AD.
\]
\end{proof}

\section{Resolving the singularities in the geodesic case}\label{sec_9}
Let, as before, $(p_0,q_0)$ be a pair of fold conjugate points along $\gamma_0$, and $X$ be the ray transform with a weight that localizes near $\gamma_0$. We want to see whether we can resolve the singularities of $f$ near $p_0$ and near $q_0$  knowing that $Xf\in C^\infty$, and more generally, whether we can invert $X$ microlocally.  Assume for simplicity that $p_0\not=q_0$.  

We will restrict ourselves to the geodesic case only but the same analysis holds without changes to the case of magnetic geodesics as well. We avoid the formal introduction of magnetic geodesics for simplicity of the exposition.  
Assume also that 
\be{ellipt}
\kappa(p,\theta)\kappa(q,-w/|w|)\not=0, \quad \mbox{for $(p,\theta)\in \mathcal{U}_0$},
\ee
where $(q,w)$ are given by \r{qw}, and $\mathcal{U}\Supset\mathcal{U}_0\ni (p_0,\theta_0)$. 
This guarantees the microlocal ellipticity of the \PDO\ $A$ near $\mathcal{N}^*(p_0,v_0)$ and $\mathcal{N}^*(q_0,w_0)$  %$A_{1,2}$ below, 
in Theorem~\ref{thm_main}, see Theorem~\ref{thm_Duke}.

\subsection{Sketch of the results} 
We explain the results before first in an informal way. As we pointed out in the Introduction, $Xf(\gamma)$ for geodesics near $\gamma_0$ can only provide information for $\WF(f)$ near $\mathcal{N}^*\gamma_0$, and does not ``see'' the other singularities. The analysis below based on Theorem~\ref{thm_main}, shows that on a principal symbol level, the operator $|D|^{1/2}F$ behaves as a Radon type of transform on the curves (when $n=2$) or the surfaces (when $n\ge3$) $\Sigma(p)$. Similarly, its adjoint behaves as a Radon transform on the curves/surfaces $\Sigma(q)$. Therefore, there are two geometric objects that can detect singularities at $p_0$ conormal to $v_0$: the geodesic $\gamma_0= \gamma_{p_0,v_0}$ (and those close to it) and the conjugate locus $\Sigma(q_0)$ through $p_0$ (and those corresponding to perturbations of $v_0$). We refer to Figure~\ref{fig:caustics4}.

When $n=2$, the information coming from integrals along the two curves (and their neighborhoods) may in principle cancel; and we show in  Theorem~\ref{thm_cancel} that this actually happens, at least to order one. When $n\ge3$, the Radon transform over $\Sigma(q)\ni p$ competes with the geodesic transform over  geodesics through $p$. Depending on the properties of that Radon transform,  the information that we get for $\pm \xi_0$ may or may not cancel because $\xi_0$ is conormal both to $\gamma_0$ and $\Sigma(q_0)$. On the other hand, for any other $\xi_1$ conormal to $v_0$ but not parallel to $\xi_0$, the geodesic $\gamma_0$ (and those close to it) can detect whether it is in $\WF(f)$ but the Radon transform restricted to small perturbations of $v_0$ (and therefore of $q_0$) will not. Thus, we can invert $N$ microlocally at such $(p_0,\xi_1)$. 

Now, when $n\ge3$, we may try to invert $N$ even at $\xi_0$ by choosing $v$'s close to $v_0$ but normal to $\xi_0$. If $\xi_0$ happens not to be conormal to the corresponding conjugate locus $\Sigma(q(p_0,v))$ at $p_0$, we can just use the argument above with the new $v$. In particular, if the map \r{LG} is a local diffeomorphism, this can be done. 

This suggests the following sufficient condition for inverting $N$ at $(p_0,\xi_1)$:
\be{Scon}
\text{$\exists \theta_1\in S_{p_0}M$, so that %$(p_0,\theta_1)\in \mathcal{U}$, 
$\kappa(p_0,\theta_1)\not=0$, $\xi_1(\theta_1)=0$, and $\xi_1$ is not conormal to $\Sigma(q(p_0,\theta_1))$ at $p_0$.}
\ee
Above, $\Sigma(q(p_0,\theta_1))$ is the conjugate locus to the point $q$ that is conjugate to $p_0$ along $\gamma_{p_0,\theta_1}$. We normally denote that point by $q(p_0,v_1)$, where $v_1\in S(p_0)$ has the same direction as $\theta_1$.  

In case of the geodesic transform, one could formulate \r{Scon} in terms of the map \r{LG} as follows:
\be{Scon2}
\text{$\exists v_1\in S(p_0)$, so that $\kappa(p_0,v_1/|v_1|)\not=0$, $\xi_1(v_1)=0$, and $\xi_1$ is not the image of $v_1$ under the map \r{LG} at $p_0$.}
\ee

In Section~\ref{sec_mag}, we present an example where \r{LG} is a local diffeomorphism, therefore \r{Scon} holds. In Section~\ref{sec_prod} we present another example, where \r{Scon} fails.

\begin{figure}[tbp] % float placement: (h)ere, page (t)op, page (b)ottom, other (p)age
  \centering
  \includegraphics[bb=9 0 507 321,width=3.66in,height=2.36in,keepaspectratio]{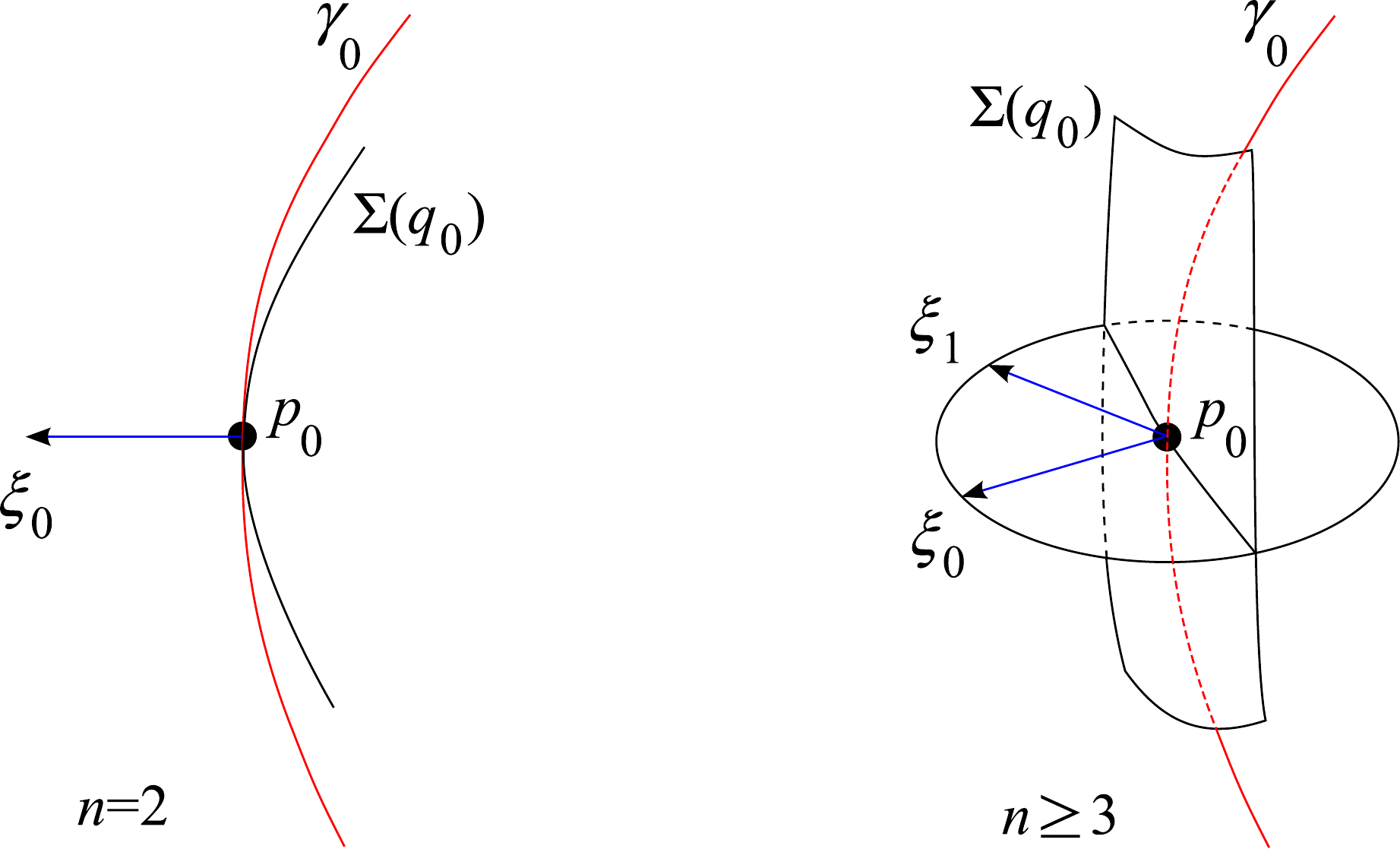}
  \caption{}
  \label{fig:caustics4}
\end{figure}

\bigskip

\subsection{Recovery of singularities in all dimensions} 
We proceed next with analysis of the recovery of singularities.

Let $\chi_{1,2}$ be smooth functions on $M$ that localize near $p_0$, and $q_0$, respectively, i.e., $\supp\chi_1\subset U_1$, $\supp\chi_2\subset U_2$, where $U_{1,2}$ are small enough neighborhoods of $p$ and $q$, respectively.  Assume that $\chi_1$, $\chi_2$ equal $1$ in smaller neighborhoods of $p_0$, $q_0$, where $f_1$, $f_2$ are supported. 
Then $f:= f_1+f_2$ is supported in $U_1\cup U_2$ and we can write
\be{N1}
\chi_1N f =  A_1 f_1+ F_{12} f_2,
\ee
where $A_1 = \chi_1 N\chi_1$ is a \PDO\ by Theorem~\ref{thm_PDO}, while $F_{12} = \chi_1 N\chi_2$ is the  FIO that we denoted by $F$ in Theorem~\ref{thm_main}.
By  (R5), we can do the same thing near $q_0$ to get
\be{N2}
\chi_2 N f = A_2 f_2+ F_{21}f_1,
\ee
where $A_2 = \chi_2 N\chi_2$,  $F_{21} = \chi_2N\chi_1$. It follows immediately that $F_{21}=F_{12}^*$. Recall that $F_{12}=F$ in the notation of Theorem~\ref{thm_main}. 
Assuming $X^*Xf\in C^\infty$, we get
\be{A1A2}
A_1 f_1+ F f_2\in C^\infty, \quad A_2 f_2+ F^*f_1\in C^\infty.
\ee
Solve the first equation for $f_2$, plug into the second one to get
\be{N3}
\big(\Id - A_2^{-1}F^*A_1^{-1}F\big)f_2\in C^\infty \quad \text{near $(q_0,\pm\eta_0) $ },
\ee
where $A_1^{-1}$,  $A_2^{-1}$, denote parametrices of $A_1$, $A_2$ near $(p_0,\pm \xi_0)$, and $ (q_0,\pm\eta_0) $, respectively. 
The operator in the parentheses is a \PDO\ of order $0$ if the canonical relation is a graph, that is true in particular when $n=2$, by Theorem~\ref{thm_2D}. In that case, if $\Id- A_2^{-1}F^*A_1^{-1}F$ is an elliptic (as a \PDO\  of order 0), near  $(q_0,\pm\eta_0),$ then we can recover the singularities. Without the canonical graph assumption, if it is hypoelliptic, then we still can. 

Another way to express the arguments above is the following. Since $\chi_{1,2}$ together with  $\kappa$ restrict to conic neighborhoods of $(p_0\pm\xi_0)$, and $(q_0\pm\eta_0)$, respectively, and $A_{1,2}$, $F$, $F^*$ have canonical relations of graph type that preserve the union of those neighborhoods, we may think of $f=f_1+f_2$ as a vector $f=(f_1,f_2)$, and then 
\be{Fmatrix}
F =
\begin{pmatrix}
 A_1&F\\F^*&A_2
\end{pmatrix}.
\ee
The operator $\Id - A_2^{-1}F^*A_1^{-1}F$ can be considered then as the ``determinant'' of $F$, up to elliptic factors.

\begin{theorem}\label{thm_sing}
Let the canonical relation of $F$ be a canonical graph. 
With the assumptions and the notation above, if the zeroth order \PDO\ 
\be{main_question}
\Id -A_2^{-1}F^*A_1^{-1}F
\ee
is elliptic in a conic neighborhood of  $(q_0,\pm\eta_0)$, then $Xf\in C^\infty$ near $(p_0,\theta_0)$ (or more generally, $Nf\in C^\infty$ near $p_0$ and $q_0$) implies $f\in C^\infty$. 
\end{theorem}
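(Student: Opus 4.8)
The plan is to reduce the statement to a question about the microlocal structure of $N$ restricted to a conic neighborhood of the relevant covectors, and then exploit the matrix decomposition already set up in the excerpt. First I would note that by Theorem~\ref{thm_main} and Theorem~\ref{thm_PDO}, acting on functions supported in $V$ and cut off near $p_0$ and $q_0$, the operator $N$ has the block form \r{Fmatrix}: the diagonal blocks $A_1,A_2$ are $\Psi$DOs of order $-1$, elliptic at $(p_0,\pm\xi_0)$ and $(q_0,\pm\eta_0)$ by the ellipticity hypothesis \r{ellipt}, while the off-diagonal block $F$ and its adjoint $F^*$ are FIOs of order $-n/2$ with canonical relation $\mathcal C$ (and its transpose), which by assumption is a canonical graph. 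Since $\mathcal C$ is a local canonical graph, $F^*A_1^{-1}F$ is again an FIO whose canonical relation is the composition $\mathcal C^t\circ\mathcal C$; I would invoke the clean (here transversal) composition calculus to see that this composition is contained in the diagonal, so $F^*A_1^{-1}F$ is a $\Psi$DO, and one checks its order: $F$ and $F^*$ each have order $-n/2$, $A_1^{-1}$ has order $+1$, giving order $-n/2 - n/2 + 1 = -n+1$; composing with $A_2^{-1}$ of order $+1$ gives $A_2^{-1}F^*A_1^{-1}F$ of order $2-n \le 0$, and for $n\ge 2$ this is a $\Psi$DO of order $\le 0$. Hence $\Id - A_2^{-1}F^*A_1^{-1}F$ is a classical $\Psi$DO of order $0$ near $(q_0,\pm\eta_0)$, which is what \r{main_question} asserts.

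Next I would carry out the elimination that is already sketched around \r{A1A2}--\r{N3}. Assuming $Nf\in C^\infty$ near $p_0$ and $q_0$ (which follows from $Xf\in C^\infty$ near $(p_0,\theta_0)$ because $N$ is built from $X$ by \r{1.2} and the relevant contributions to $\WF(Nf)$ near $p_0,q_0$ come only from $\WF(f)$ conormal to $\gamma_0$), equations \r{N1}--\r{N2} give $A_1 f_1 + Ff_2\in C^\infty$ and $A_2 f_2 + F^*f_1\in C^\infty$ microlocally near the relevant covectors. Applying the parametrix $A_1^{-1}$ (which exists microlocally near $(p_0,\pm\xi_0)$ by ellipticity) to the first relation yields $f_1 \equiv -A_1^{-1}Ff_2$ modulo $C^\infty$; substituting into the second gives $(\Id - A_2^{-1}F^*A_1^{-1}F)f_2\in C^\infty$ microlocally near $(q_0,\pm\eta_0)$. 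If this zeroth order $\Psi$DO is elliptic there, its parametrix recovers $f_2\in C^\infty$ near $(q_0,\pm\eta_0)$; feeding this back into $f_1\equiv -A_1^{-1}Ff_2$ gives $f_1\in C^\infty$ near $(p_0,\pm\xi_0)$. Finally, since $f$ is supported in $V$ and the only singularities of $f$ that can be detected by $Xf$ for geodesics near $\gamma_0$ lie conormal to $\gamma_0$, and since by hypothesis the only conjugate vector involved is $v_0$, one concludes that $\WF(f)$ near $\mathcal N^*\gamma_0$ is empty; combined with the standard microlocal regularity of the geodesic transform away from conjugate points (Theorem~\ref{thm_Duke}(b), ellipticity of the $\Psi$DO part), one upgrades this to $f\in C^\infty$ globally.

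The main obstacle, I expect, is the bookkeeping needed to make the composition $F^*A_1^{-1}F$ genuinely a $\Psi$DO and to justify the ``microlocal'' phrasing everywhere: one must verify that the cutoffs $\chi_1,\chi_2$ together with the support condition on $\kappa$ confine all the wavefront interactions to disjoint conic neighborhoods of $(p_0,\pm\xi_0)$ and $(q_0,\pm\eta_0)$, that $\mathcal C$ maps the former pair to the latter pair (and $\mathcal C^t$ back), and that no other branches of the conjugate locus or other conjugate vectors contribute — this is exactly where the single-fold assumption and the smallness of $\mathcal U$ are used. The canonical graph hypothesis is what makes $\mathcal C^t\circ\mathcal C$ a subset of the diagonal with the composition being transversal (not merely clean), so the $\Psi$DO conclusion and the order count \r{main_question} are clean; without it one would only get an FIO-type statement and would have to argue hypoellipticity instead, as the remark before Theorem~\ref{thm_sing} indicates. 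A secondary technical point is checking that $A_2^{-1}F^*A_1^{-1}F$ has order $\le 0$ so that $\Id$ minus it is a legitimate order-zero $\Psi$DO; this is immediate from the orders $-1,-1,-n/2,-n/2$ of $A_1,A_2,F,F^*$ once $n\ge 2$, with the borderline $n=2$ giving order exactly $0$.
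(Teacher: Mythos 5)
Your proposal is correct and takes essentially the same route as the paper: the paper's proof of this theorem is exactly the elimination \r{N1}--\r{N3}, applying microlocal parametrices of the elliptic blocks $A_1,A_2$, noting that the canonical-graph hypothesis makes $A_2^{-1}F^*A_1^{-1}F$ a $\Psi$DO of order $2-n\le 0$ so that \r{main_question} is a zeroth order $\Psi$DO, and then inverting it microlocally near $(q_0,\pm\eta_0)$ by ellipticity to recover first $f_2$ and then $f_1$. Your extra bookkeeping on the transversal composition $\mathcal{C}^t\circ\mathcal{C}$ and the order count simply makes explicit what the paper states before the theorem and in the proof of Corollary~\ref{cor_sing}.
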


In the geodesic case in two dimensions, the principal symbol of $A_2^{-1}F^*A_1^{-1}F$ is always $1$, see the Proposition~\ref{pr_2D_symbol} below.

When $n\ge3$ and $F$ is of graph type, then $A_2^{-1}F^*A_1^{-1}F$ is of negative order, therefore we can resolve the singularities.

\begin{corollary}\label{cor_sing}
Let $n\ge3$ and assume that the canonical relation of $F$ is a canonical graph. Then the conclusions of Theorem~\ref{thm_sing} hold, i.e., $Xf\in C^\infty$ near $(p_0,\theta_0)$ (or more generally, $Nf\in C^\infty$ near $p_0$, $q_0$) implies $f\in C^\infty$. 
\end{corollary}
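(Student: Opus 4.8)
The plan is to deduce Corollary~\ref{cor_sing} from Theorem~\ref{thm_sing} by checking that its hypothesis --- ellipticity of the zeroth order \PDO\ in \eqref{main_question} --- is automatic once $n\ge3$ and the canonical relation $\mathcal{C}$ of $F$ is a local canonical graph. The key observation is that the order of $F$ (as computed in Theorem~\ref{thm_main}) is $-n/2$, while $A_1$, $A_2$ are \PDO s of order $-1$, elliptic on the relevant conic neighborhoods by \eqref{ellipt} and Theorem~\ref{thm_Duke}. Hence $A_1^{-1}$, $A_2^{-1}$ have order $+1$, and the composition $A_2^{-1}F^*A_1^{-1}F$ has order $1 + (-n/2) + 1 + (-n/2) = 2 - n$. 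For $n\ge3$ this is strictly negative.

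First I would record that the composition $A_2^{-1}F^*A_1^{-1}F$ is well-defined as an FIO-composition: the graph hypothesis on $\mathcal{C}$ ensures that $F^*$ has canonical relation $\mathcal{C}^{-1}$, again a canonical graph, so $F^*A_1^{-1}F$ is a composition of two FIOs whose canonical relations compose cleanly (transversality is automatic for canonical graphs, and the base points $p_0,q_0$ are distinct by assumption, so no diagonal terms interfere); the resulting operator has canonical relation $\mathcal{C}^{-1}\circ(\text{graph of }A_1^{-1})\circ\mathcal{C}$, which is again the graph of a homogeneous canonical transformation --- and in fact, because $\mathcal{C}$ is a graph $(p,\xi)\mapsto(q,\eta)$ fixing the conic neighborhoods of $(q_0,\pm\eta_0)$, this composite relation is (a conic neighborhood of) the identity. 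Therefore $A_2^{-1}F^*A_1^{-1}F$ is a genuine \PDO, of order $2-n<0$.

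Next, a \PDO\ of negative order has principal symbol identically zero when viewed as an operator of order $0$; equivalently its full symbol tends to $0$ as $|\xi|\to\infty$. Consequently $\Id - A_2^{-1}F^*A_1^{-1}F$ is a \PDO\ of order $0$ with principal symbol equal to the constant $1$ in the conic neighborhood of $(q_0,\pm\eta_0)$, which is exactly the ellipticity required in the hypothesis of Theorem~\ref{thm_sing}. Applying that theorem then yields: $Nf\in C^\infty$ near $p_0$ and $q_0$ (in particular $Xf\in C^\infty$ near $(p_0,\theta_0)$) implies $f_2\in C^\infty$ near $(q_0,\pm\eta_0)$, and then the first relation in \eqref{A1A2} together with ellipticity of $A_1$ gives $f_1\in C^\infty$ near $(p_0,\pm\xi_0)$; since all other conormal directions at $p_0$, $q_0$ are handled by the clean intersection calculus already invoked in Theorem~\ref{thm_sing}, we conclude $f\in C^\infty$.

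I do not expect a serious obstacle here --- the corollary is essentially a bookkeeping consequence of the order count together with Theorem~\ref{thm_sing}. The only point requiring mild care is justifying that the FIO composition $F^*A_1^{-1}F$ really does reduce to a \PDO\ under the canonical-graph hypothesis (rather than a more general FIO), i.e.\ checking the clean/transversal intersection condition for composing the canonical relations and that the composite relation is the identity near the relevant points; this is standard (see \cite[25.2]{Hormander3}) given that $\mathcal{C}$ is a canonical graph and $p_0\ne q_0$, but it is the one step where one must be explicit about which conic neighborhoods are being used so that the symbol of \eqref{main_question} is genuinely $1$ there and not merely $1$ plus lower-order terms that could vanish.
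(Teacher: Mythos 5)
Your argument is correct and is essentially the paper's own proof: the same order count showing $A_2^{-1}F^*A_1^{-1}F$ is a \PDO\ of order $2-n<0$ under the canonical graph hypothesis (so its order-zero principal symbol vanishes and \eqref{main_question} is elliptic), followed by an application of Theorem~\ref{thm_sing}. Your added remarks on the clean composition of the canonical relations and the composite being the identity near $(q_0,\pm\eta_0)$ simply make explicit what the paper leaves implicit.
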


\begin{proof}
In this case, $A_1^{-1}F$ is an FIO of order $1-n/2$ with the same canonical relation is $F$. Similarly $A_2^{-1}F^*$ is an FIO of order $1-n/2$ with a canonical relation that is a graph of the inverse canonical map. Their composition is therefore a \PDO\ of order $2-n<0$. Its principal symbol as a \PDO\ of order $0$ is zero. The corollary now follows from Theorem~\ref{thm_sing}. 
\end{proof}

In Section~\ref{sec_mag}, we give an example where the assumptions of the corollary hold. Note that those assumptions are stable under small perturbations of the dynamical system. 

When the graph condition does not hold, the analysis is harder. Then \r{LG} is not a local diffeomorphism. If its range is a lower dimension submanifold, for example, we can at least recover the conormal singularities to $\theta_0$ away from it, as the corollary below implies. Note that below, (b) implies (a). Also, \r{ellipt} is not needed; only ellipticity of $\kappa$ at $(p_0,\theta_0)$ suffices.

\begin{corollary}\label{cor_2} Let $Xf\in C^\infty$ for $\gamma$ near $\gamma_0$. Then

(a) If $\xi_1\in T_{p_0}M\setminus 0$ is conormal to $v_0$ but not conormal to $\Sigma(q_0)$ (not parallel to $\xi_0$), then 
\[
(p_0,\xi_1)\not\in \WF(f). 
\]

(b) The same conclusion holds if condition \r{Scon} or the equivalent \r{Scon2} is fulfilled.
\end{corollary}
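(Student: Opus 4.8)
The plan is to prove Corollary~\ref{cor_2} by localizing in the phase space and exploiting the fact that the two geometric objects carrying information about $(p_0,\xi_1)$ — the geodesic $\gamma_0$ and its neighbors, versus the conjugate locus $\Sigma(q_0)$ — detect different conormal directions unless $\xi_1$ happens to be parallel to $\xi_0$. The key point is that the \PDO\ part $A$ of $N$ (Theorem~\ref{thm_main}, \eqref{6.1}) is elliptic at all $(p_0,\xi_1)$ with $\xi_1(\theta_1)=0$ for some $\theta_1$ in the support of $\kappa^\sharp\kappa$, while the FIO part $F$ has canonical relation $\mathcal{C}=\mathcal{N}^*\Sigma'$ whose projection to the $p$-variable covectors only hits directions conormal to some $\Sigma(q)$, and at $p_0$ with $v$ near $v_0$ that conormal direction is (a perturbation of) $\xi_0$.

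For part (b), assuming \eqref{Scon} (equivalently \eqref{Scon2} in the geodesic case), I would argue as follows. Pick $\theta_1$ as in \eqref{Scon}: then $\kappa(p_0,\theta_1)\neq0$, so by \eqref{6.1} the \PDO\ $A$ is elliptic at $(p_0,\xi_1)$ since $\delta(\xi_1(\theta))$ is supported where $\xi_1(\theta)=0$ and $\theta_1$ is such a direction with $(\kappa^\sharp\kappa)(p_0,\theta_1)\neq0$ (here we only need ellipticity of $\kappa$ at $(p_0,\theta_0)$, hence we may arrange $\kappa^\sharp\kappa\neq0$ near $\theta_1$ by the localization of $\kappa^\sharp$, or simply invoke that we are free to choose $\kappa^\sharp$; alternatively one runs the argument with $X^*X$ where $\kappa^\sharp=\bar\kappa$). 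Now $N=A+F$ with $F$ an FIO associated to $\mathcal{C}$. The wavefront set relation of $F$ maps $(p_0,\xi_1)$ forward only if $(p_0,\xi_1,q,\eta)\in\mathcal{C}$ for some $(q,\eta)$; by the description \eqref{C}, this forces $\xi_1=-\eta_i\,\partial\exp_{p_0}^i(v)/\partial p$ with $v\in S(p_0)$ and $q=\exp_{p_0}(v)$, i.e. $\xi_1\in\mathcal{N}^*_{p_0}\Sigma(q)$. The hypothesis \eqref{Scon} says precisely that $\xi_1$ is \emph{not} conormal to $\Sigma(q(p_0,\theta_1))$ at $p_0$; but one must rule out \emph{all} such $v$, not just the one in the direction $\theta_1$. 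This is handled by taking $\mathcal{U}$ small: $v$ ranges over a small neighborhood of $v_0$ in $S(p_0)$, and the map $v\mapsto \mathcal{N}^*_{p_0}\Sigma(q(p_0,v))$ (a line in $T^*_{p_0}M$) is continuous by Theorem~\ref{thm_F}; if $\xi_1$ avoids the line corresponding to one such $v_1$ it avoids a neighborhood of it, and after shrinking $\mathcal{U}$ the only $v$'s that occur are close to that $v_1$ — here one uses that the conjugacy assumption on $\supp f$ singles out a unique conjugate vector. Hence $(p_0,\xi_1)$ is not in the image of $\WF$ under the relation of $F$ acting on $f$ supported near $q_0$, so $Ff_2$ is smooth at $(p_0,\xi_1)$. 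Since $Nf\in C^\infty$ near $p_0$ and $A$ is elliptic there, microlocal elliptic regularity gives $(p_0,\xi_1)\notin\WF(f)$.

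For part (a), the statement is the special case of (b) in the geodesic setting where one already knows $\kappa$ is elliptic along $\gamma_0$, so there is a $\theta_1$ (namely $\theta_0$, or any nearby unit vector with $\xi_1(\theta_1)=0$) with $\kappa(p_0,\theta_1)\neq0$; the hypothesis that $\xi_1$ is conormal to $v_0$ but not to $\Sigma(q_0)$ (equivalently not parallel to $\xi_0$, since $\mathcal{N}^*_{p_0}\Sigma(q_0)$ is spanned by $\xi_0$ by \eqref{2.5cn}) is exactly \eqref{Scon} with this choice, modulo the same shrinking argument. I would phrase (a) as an immediate consequence once (b) is established, noting that in dimension two this recovers nothing new beyond $\pm\xi_0$ but in higher dimensions it gives a large cone of recoverable directions.

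The main obstacle I anticipate is the uniformity in $v$: one needs that, after localizing $\kappa^\sharp$ and restricting $\supp f$, the only conjugate vectors $v$ at $p_0$ landing in $\supp f$ lie in an arbitrarily small neighborhood of a single $v_1$, so that the ``bad'' conormal line $\mathcal{N}^*_{p_0}\Sigma(q(p_0,v_1))$ is the only one the FIO can produce at $p_0$, and its complement (where $\xi_1$ lives) is open. This is where the hypothesis in Theorem~\ref{thm_main} that $v_0$ is the \emph{only} conjugate vector along the relevant ray inside $V$ — combined with the continuity of the fold conjugate locus from Theorem~\ref{thm_F} — does the work; making the quantifiers precise, rather than any hard analysis, is the delicate part. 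Everything else is a routine application of the calculus of FIOs and elliptic microlocal regularity to the decomposition $N=A+F$.
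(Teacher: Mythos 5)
Your argument is correct and is essentially the paper's own proof: ellipticity of the pseudodifferential part at $(p_0,\xi_1)$, the wave front relation $\WF(Ff_2)\subset \WF'(F)\circ\WF(f_2)$, and a continuity/localization argument (shrinking the weight to a flow-out of a small neighborhood so the conormals to the corresponding conjugate loci at $p_0$ avoid a conic neighborhood of $\xi_1$) are exactly the steps used there, and your identified ``delicate part'' is precisely what the paper handles with the cutoff $\chi$ constant along the flow. The only difference is cosmetic: the paper proves (a) first and obtains (b) by rerunning the argument with $v$ varied near $v_0$ normal to $\xi_1$, whereas you prove (b) directly and read off (a) as the special case $\theta_1=\theta_0$.
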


\begin{proof}
Note first that $A_1$ is elliptic at $(p_0,\zeta)$ by \r{ellipt} and Theorem~\ref{thm_Duke}(b). By the first relation in \r{A1A2}, $(p_0,\xi_1)\in \WF(f_1)$ if and only if $(p_0,\xi_1)\in \WF(Ff_2)$. To analyze the latter, we will use the relation $\WF(Ff_2)\subset \WF'(F)\circ \WF(f_2)$, see \cite[Thm~8.5.5]{Hormander1}. Note also that in the notation in \cite[Thm~8.5.5]{Hormander1}, $\WF(F)_X$ is empty. By Theorem~\ref{thm_kernel}, $\WF'(F)$ consists of those points in the canonical relation $\mathcal{C}$, see \r{C}, for which the conormal singularity in \r{thm_k1} is not canceled by a zero weight. 

Now, let $\xi_1$ be as in (a). Since $\xi_1$ is separated by $\pm \xi_0$ by a conic neighborhood, one can choose a weight $\chi$ on $SM$ that is constant along the geodesic flow, non-zero at $(p_0,\theta_0)$ and supported in a flow-out of a neighborhood $\mathcal{V}$ of it small enough such that the conormals to the corresponding conjugate loci at $p_0$ stay away from a neighborhood of $\xi_1$. In the geodesics case, the condition is that the map \r{LG} restricted to $\mathcal{V}$, does not intersect a chosen small enough conic neighborhood of $\pm \xi_0$. This can always be done by continuity arguments.   Then left projection of $\WF'(F)$ will not be singular at $(p_0,\xi_1)$, and therefore, $Ff_2$ will have the same property regardless of the singularities of $f_2$. 

Statement (b) follows from (a) by varying $v$ near $v_0$ in directions normal to $\xi_1$. 
\end{proof}

\newcommand{\m}{r}
\newcommand{\M}{R}

\subsection{Calculating the principal  symbol of \r{main_question} in case of Riemannian surfaces.} 
Let $\exp$ be the exponential map of $g$, and let $n\ge2$. We will take $n=2$ later. 
Recall that the leading singularity of the kernel of $N$ near $\Sigma$ is of the type $(z'_+)^{-1/2}$, by Theorem~\ref{thm_kernel}. We will compose $F$ with a certain \PDO\  $\M$ so that this singularity becomes of the type $\delta(z')$. Then modulo lower order terms, $F\M f(p)$ will be a weighted Radon transform over the surface $\Sigma(p)$. In 2D, that will be an X-ray type of transform. We are only interested in this composition acting on distributions with wave front sets in a small conic neighborhood $\mathcal{W}$ of $(q_0,\pm \eta_0)$. 

The Fourier transform of $(z'_+)^{-1/2}$ is given by \r{FTz}. Its reciprocal is 
$$
\pi^{-1/2}e^{\i \pi/4}\left( h(\zeta)\zeta^{1/2}-\i h(-\zeta)(-\zeta)^{1/2}\right) 
= \pi^{-1/2}e^{\i \pi/4}\big( h(\zeta)-\i h(-\zeta) \big)|\zeta|^{1/2},
$$
where $h$ is the Heaviside function, and $|\zeta|$ is the norm in $T^*_yM$.  
We fix $p$ near $p_0$ and local coordinates $x=x(p)$ there, and we work in semi-geodesic  coordinates $y=y(p,q)$ near $q_0$ normal to $\Sigma(p)$ oriented as in section~\ref{sec_diag}. Let $x$ denote local coordinates near $q_0$. 
Let $\M$ be a properly supported  \PDO\ of order $1/2$ with principal symbol, equal to
\be{sin_q}
\m(y,\eta) = \pi^{-1/2}e^{\i \pi/4}\big( h(\eta_n)-\i h(-\eta_n) \big)|\eta|^{1/2} r_0(y,\eta),
\ee
in $\mathcal{W}$, outside some neighborhood of the zero section, where $r_0$ is a homogeneous symbol of order $0$, an even function of  $\eta$.  
 Note that
\be{sin2}
|\m|^2= \pi^{-1}|\eta| r_0^2.%, \quad \m^4 =  -\pi^{-1}|\eta|^2r_0^4. 
\ee
The appearance of the Heaviside function here can be explained by the fact that $N^*\Sigma$ has two connected components: near $(p_0,q_0,-\xi_0,\eta_0)$ and near $(p_0,q_0,\xi_0,-\eta_0)$; and the constants needs to be chosen differently in each component. 

We start with computing the composition 
\be{sin1}
F\M.
\ee

Since the kernel of \r{sin1} is the transpose of that of $\M F'$, we will compute the latter; and we only need those singularities that belong to $\mathcal{W}$. Denote by $F(p,q)$ the Schwartz kernel of $F$. Then the kernel $F'(q,p)  =F(p,q)$ of  $F'$ (with the notation convention $F'f(q) =\int  F'(q,p) f(p)\,\d \Vol(p)$) can be written as $F'(q(x,y),p(x))$ that with some abuse of notation we denote again by $F'(y,x)$. Then
\be{sin3a}
F'(y,x) := (2\pi)^{-1}\int e^{\i y^n \eta_n}  \tilde  F'(y',\eta_n,x)\, \d \eta_n,
\ee
where $\tilde F'$ is the partial Fourier transform of $F$ w.r.t.\ $y^n$, and there is no summation in $y^n \eta_n$.  By Theorem~\ref{thm_kernel} and \r{FTz},
\be{sin3a1}
\tilde F'(y',\eta_n,x) =  \pi^{1/2}e^{-\i \pi/4}\big( h(\eta_n)+\i h(-\eta_n) \big)|\eta_n|^{-1/2} G(x,y',\eta_n)
\ee
where $G$ is a symbol w.r.t.\ $\eta_n$, smoothly depending on $(x,y')$ with principal part
\[
G_0:=  W_\Sigma \frac{\sqrt2}{\sqrt{AD}}.
\]
Moreover, by Theorem~\ref{thm_kernel}, $G$ has an expansion it terms of positive powers of $|\eta_n|^{-1/2}$. In particular, $G-G_0$ is an amplitude of order $-1/2$ that contributes a conormal distribution in the class $I^{-n/2-1/2}(M\times M, \Sigma;\mathbf{C})$, see, e.g., \cite[Thm~18.2.8]{Hormander3}. 
By the calculus of conormal singularities, see e.g.,  \cite[Theorem~18.2.12]{Hormander3}, the kernel of $F\M$ is of conormal type at $y^n=0$  as well, with a principal symbol given by that of $F$ multiplied by $\m|_{y^n=0, \eta'=0}$. That principal symbol coincides with the full one modulo conormal kernels of order $1$ less that the former, see the expansions in \cite{Hormander3} preceding Theorem~18.2.12. 
Since we assumed that $r_0$ is an even homogeneous function of $\eta$ of order $0$, 
$r_0(y',0,0,\eta_n)$ is a function of $y'$ only for $\eta$ in a conic neighborhood of $(0,\pm 1)$, equal to $r(y,0,0,1)$.  Therefore,  the principal part of $\m(y,D_y)F'(\cdot,x)$ is
\be{sin4}
(2\pi)^{-1} \int e^{\i y^n \eta_n}   G_0(x,\eta')  r_0(y',0,0,1)  \, \d \eta_n =  
 W_\Sigma \frac{\sqrt2}{\sqrt{AD}} r_0(y',0,0,1) \delta(y^n),
\ee
and the latter is in $I^{-n/2+1/2}(M\times M, \Sigma;\mathbf{C})$. The ``error'' is determined by the next term of the principal symbol of the composition $FR$ with $G$ replaced by $G_0$, that is of order $1$ lower and by the contribution of $G=G_0$ that is of order $-1/2$ lower. Since the coordinates $(y',y^n)$ depend on $p$, as well, $r_0(y',0,0,1)$ is actually the restriction of $r_0$ to $\mathcal{N}^*\Sigma(p)$. 
 So we  proved the following.

\begin{lemma}\label{lemma_FR}
Let $r_0$ be as in \r{sin_q}. Then modulo $I^{-n/2}(M\times M, \Sigma;\mathbf{C})$, $F\M \in I^{1/2-n/2}(M\times M, \Sigma;\mathbf{C})$ reduces to the  Radon transform
\[
F\M f(p) \simeq \int_{\Sigma(p)}af\, dS,\quad a :=   
 \left.  r_0\right|_{\mathcal{N}^*\Sigma(p)} W_\Sigma \frac{\sqrt2}{\sqrt{AD}}  ,
\]
where $\d S$ is the Riemannian surface measure on $\Sigma(p)$ that we previously denoted by $\d\Vol_{\Sigma(p)}$. 
\end{lemma}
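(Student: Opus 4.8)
The plan is to compute the composition $F\M$ by working with the transpose $\M F'$, reduce the conormal singularity of $F$ (which is of the type $(z'_+)^{-1/2}$, by Theorem~\ref{thm_kernel}) to a $\delta(z')$ by multiplying by the appropriate half-order symbol, and then read off that $\delta(z')$ represents integration over $\Sigma(p)$. First I would fix $p$ near $p_0$ and semi-geodesic coordinates $y=y(p,q)$ normal to $\Sigma(p)$, so that $\Sigma$ is $\{y^n=0\}$ and the kernel of $F'$ has, by Theorem~\ref{thm_kernel}, the partial Fourier transform in $y^n$ of the form \r{sin3a1} with symbol $G$ whose principal part is $G_0=W_\Sigma\sqrt2/\sqrt{AD}$ and which admits an expansion in nonnegative powers of $|\eta_n|^{-1/2}$. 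The key algebraic point is the reciprocal relation: $\M$ is chosen with principal symbol \r{sin_q}, so that on $\mathcal{W}$ the product of $\m$ with the factor $\pi^{1/2}e^{-\i\pi/4}(h(\eta_n)+\i h(-\eta_n))|\eta_n|^{-1/2}$ appearing in \r{sin3a1} equals $r_0(y,\eta)$, collapsing the $|\eta_n|^{-1/2}$ homogeneity to order zero.

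Next I would invoke the calculus of conormal distributions — specifically \cite[Theorem~18.2.8]{Hormander3} for the membership statements and \cite[Theorem~18.2.12]{Hormander3} for the composition of a $\Psi$DO with a conormal distribution — to conclude that $\M F'$ is again conormal at $\{y^n=0\}$, with principal symbol obtained by multiplying that of $F'$ by $\m$ restricted to $y^n=0$, $\eta'=0$. Since $r_0$ is assumed an even homogeneous symbol of order $0$, its restriction $r_0(y',0,0,\eta_n)$ is independent of $\eta_n$ in the relevant cone and equals $r_0(y',0,0,1)$; crucially the two connected components of $\mathcal{N}^*\Sigma$ near $(p_0,q_0,\mp\xi_0,\pm\eta_0)$ are handled uniformly because the Heaviside functions in \r{sin_q} and \r{sin3a1} are matched on each component. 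Computing the inverse partial Fourier transform $(2\pi)^{-1}\int e^{\i y^n\eta_n} G_0 r_0(y',0,0,1)\,\d\eta_n$ gives exactly $W_\Sigma(\sqrt2/\sqrt{AD})\, r_0|_{y^n=0}\,\delta(y^n)$, i.e.\ \r{sin4}, which lies in $I^{1/2-n/2}(M\times M,\Sigma;\mathbf{C})$. Interpreting $\delta(y^n)\,\d\Vol(y)=\d\Vol_{\Sigma(p)}(y')$ in these semi-geodesic coordinates identifies the leading term of $F\M f(p)$ with $\int_{\Sigma(p)} a f\,\d S$ for $a = r_0|_{\mathcal{N}^*\Sigma(p)}\,W_\Sigma\sqrt2/\sqrt{AD}$.

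Finally I would control the error terms: the contribution of $G-G_0$ is, by the expansion in Theorem~\ref{thm_kernel}, an amplitude of order $-1/2$ lower, hence a conormal distribution in $I^{-n/2-1/2}\subset I^{-n/2}$; and the subprincipal terms in the $\Psi$DO$\times$conormal composition (from the expansion preceding \cite[Theorem~18.2.12]{Hormander3}, with $\m$ replaced by lower-order amplitude terms and $\eta'$-derivatives at $\eta'=0$) contribute kernels of order one less than the principal one, again in $I^{-n/2}$. Together these give the claimed membership modulo $I^{-n/2}(M\times M,\Sigma;\mathbf{C})$. I expect the main obstacle to be bookkeeping of the $p$-dependence: the coordinates $y=y(p,q)$ vary with $p$, so the restriction $r_0(y',0,0,1)$ must be recognized as the invariant restriction $r_0|_{\mathcal{N}^*\Sigma(p)}$ and one must check that the composition calculus is uniform as $p$ ranges near $p_0$ — but since everything has already been written in invariant form in section~\ref{sec_diag} and Theorem~\ref{thm_kernel}, this reduces to observing that the constructions depend smoothly on $p$.
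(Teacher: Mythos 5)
Your proposal is correct and follows essentially the same route as the paper: passing to the transpose $\M F'$, using the partial Fourier representation \r{sin3a}--\r{sin3a1} of the conormal kernel from Theorem~\ref{thm_kernel}, applying the conormal composition calculus of \cite[Thm~18.2.12]{Hormander3} with $\m$ restricted to $y^n=0$, $\eta'=0$ to collapse the $|\eta_n|^{-1/2}$ singularity to $\delta(y^n)$, and absorbing the $G-G_0$ and subprincipal contributions into $I^{-n/2}(M\times M,\Sigma;\mathbf{C})$. The identification of $r_0(y',0,0,1)$ with the invariant restriction $r_0|_{\mathcal{N}^*\Sigma(p)}$, including its $p$-dependence, is handled exactly as in the paper.
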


In two dimensions, this is an X-ray type of transform. In higher dimensions, this is a Radon type of transform on the family of codimension one surfaces $\Sigma(p)$.

In what follows, $n=2$. 

We will compute $RF^* F R$ next. We have
\be{sin5}
\int FR f \overline{FR h}\,\d\Vol  \simeq\int_M \int_{\Sigma(p)}(af)(z')\, dS(z')\int_{\Sigma(p)}(\bar a \bar h)(q)\, dS(q)\, \d\Vol(p)
\ee
modulo terms of the kind $(Pf,h)$, where $P$ is a \PDO\ of order $-3/2$ or less.  

In the latter integral, $p$ parameterizes the curve $\Sigma(p)$, while $q\in \Sigma(p)$ parameterizes a point on it. Another parameterization is by $p$ and $\xi\in S_p^*M$ with $\xi$ oriented positively; then $q=\exp_p(v)$, where $v\in \Sigma(p)$ and $\xi(v)=0$. For the Jacobian of that change we have 
\be{sin6}
\d S(q)\, \d \Vol(p) =  D\,\d \Vol_{S(p)}(v)\, \d \Vol(p) = \frac{|v|D}{\cos\phi}\,\d \sigma_p(\xi)\,\d \Vol(p),
\ee
and we recall that $\d\sigma_p$ denotes the surface measure on $S_pM$, that in this case is a circle. 
The canonical map $(p,\xi)\to (q,\eta)$ is symplectic, and therefore preserves the volume form $\d p \, \d \xi$. Set 
\be{sin7}
K := |\eta(p,\xi)|/|\xi|. 
\ee
Then this map takes $S^*M$ into   $\{(q,\eta)\in T^*M; \; |\eta|=K\}$. Project that bindle to the unit circle one, and set $\hat\eta = \eta/|\eta|$. Then we have the map $(p,\xi)\to (q,\hat \eta)$, and $\d \Vol(p)\, \d \sigma_p(\xi) = K^2\d \Vol(q) \, \d\sigma_q ({\hat\eta})$.

When we perform those changes of variables in \r{sin5}, we will have
\be{sin8}
\d S(q) \, \d \Vol(p) = \frac{|w|DK^2}{\cos\phi} \d \Vol(q)\, \d \sigma_q({\eta}),
\ee
where $p\in M$, $q\in \Sigma(p)$,  $(q,\eta)\in S^*M$, and we removed the hat over $\eta$.  Let $w$ is the corresponding vector in $S(q)$ normal to $\eta$. That parameterizes the curves $\Sigma(p)$ over which we integrate by initial points $q$ and unit conormal vectors $\eta$. The latter can be replaced by unit tangent vectors $\hat w = w/|w|$; then $\d \Vol(q)\, \d \sigma_q({\eta}) = \d\Vol(q)\, \d \sigma_q({\hat w})$. Let us denote the so parameterized curves
by $c_{q,\hat w}(s)$, where $s$ is an arc-length parameter.

It remains to notice that the integral w.r.t.\ $z'\in \Sigma(p)$ is an integral w.r.t.\ the arc-length measure on $\Sigma(p)$, that we denote by $s$. Then performing the change of the variables $(p,q,z')\mapsto (q,\hat w, z')$ in \r{sin5}, we get 
\be{sin8a}
\int FR f \overline{FR h}\,\d\Vol 
  \simeq \int_{\R\times S_qM\times M   }  (af)(c_{q,\hat w}(s)) 
\bar a(q,-\hat w) \bar h(q) 
 \, \d s\, 
\frac{|w|DK^2}{\cos\phi} \, \d \sigma_q(\hat w) \,\d \Vol(q).
\ee
Therefore, we get as in \eqref{Xstar2}, \r{7.1},
\be{sin9}
\begin{split}
R^*F^* F R f(q) & \simeq \frac{1}{\sqrt{\det(g(q))}}\int    a\bar a \frac{|w|DK^2}{\cos\phi} \frac{f(q')}{\rho(q,q')} \,\d\Vol(q')\\
&  \simeq \frac{1}{\sqrt{\det(g(q))}}\int  \big|r_0|_{\mathcal{N}^*\Sigma(p)}\big|^2    |W_\Sigma|^2 \frac{2|w|K^2}{A\cos\phi} \frac{f(q')}{\rho(q,q')}  \,\d\Vol(q').
\end{split}
\ee
For the directional derivatives of $\det\d \exp_p(v)= -J'/|v|$, see \r{DE}, we have that the derivative along the radial ray is $|J'(1)|/|v|$ by absolute value, while the derivative in the direction of $S(p)$ vanishes. That implies
\[
A\cos\phi = |J'(1)|/|w|= K/|w|.
\]
Therefore,
\be{sin10}
R^*F^* F R  f(q)   \simeq \frac{1}{\sqrt{\det(g(q))}}\int 2 K  \big|r_0|_{\mathcal{N}^*\Sigma(p)}\big|^2    |W_\Sigma|^2 |w|^2
 \frac{f(q')}{\rho(q,q')}  \,\d\Vol(q').
\ee
Here $(p,v)$ is defined as follows. It is the point in $SM$ that lies on the continuation of the geodesic through $q$, $q'$ to its conjugate point near $p_0$, The weight $\kappa$ restricts $q'$ to a small neighborhood of $\gamma_0$. Next, $A_2$ restricts $q'$ near $q_0$. 

We compare \r{sin10} with \r{7.1} and \r{a2}. Notice that the Jacobian term in \r{7.1} at the diagonal equals $\sqrt{\det g}$ and therefore cancels the factor in front of the integral in the calculation of the principal symbol. 
We therefore proved the following. 

\begin{lemma}\label{pr_F\M } Let $n=2$. Then 
$R^*F^* FR$ is a \PDO\ of order $-1$ with principal symbol modulo $S^{-3/2}$ at $(q,\eta)$ near $(q_0,\eta_0)$ given by
\[
4\pi K|\eta|^{-1}\big|r_0|_{\mathcal{N}^*\Sigma(p)}\big|^2 |\kappa(p,v/|v|)|^2 |\kappa(q,-w/|w|)|^2
\]
Here $w/|w|$ is a continuous choice of a unit vector normal to $\eta$ at $q$, so that $(q,w/|w|)=(q_0,w_0/|w_0|)$ when $(q,\eta)=(q_0,\eta_0)$, and $v/|v|$ is a parallel transport of $-w/|w|$ from $q$ to its conjugate point $p$ along the geodesic $\gamma_{q,w}$.
\end{lemma}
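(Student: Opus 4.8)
**Proof proposal for Lemma (the principal symbol of $R^*F^*FR$).**

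The plan is to extract the principal symbol of $R^*F^*FR$ directly from the explicit expression \r{sin10} obtained above, exactly as Theorem~\ref{thm_Duke}(b) extracts \r{a2} from the kernel \r{7.1}. The point is that \r{sin10} exhibits $R^*F^*FR$, modulo a \PDO\ of order $-3/2$, as a weighted geodesic X-ray composition of the form \r{7.1}: its Schwartz kernel is a smooth weight times $1/\rho(q,q')$, where $\rho$ is the geodesic distance between $q$ and $q'$, and the relevant geodesics are those through $q_0$ conjugate to a point near $p_0$. Since the kernel of \r{7.1} is a classical \PDO\ of order $-1$ whenever the underlying geodesic family has no conjugate points in the region where the weight is supported --- and here the weights $\kappa$ and $A_2$ confine $q'$ to a small neighborhood of $q_0$ and $p$ to a small neighborhood of $p_0$, away from conjugate points of the relevant subsegment --- the operator \r{sin10} is a classical \PDO\ of order $-1$.

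Next I would read off the principal symbol. By Theorem~\ref{thm_Duke}(b) and formula \r{a2}, a kernel of the form $\det g(q)^{-1/2}\int b(q,q')\, \rho(q,q')^{-1}(\cdots)\, \d\Vol(q')$ has principal symbol $2\pi$ times the integral over $S_q^*M$ of the weight restricted to the diagonal against $\delta(\eta(\theta))$; in two dimensions this collapses (as in \r{pr2D}) to $2\pi$ times the value of the diagonal weight at the unit vector normal to $\eta$. Applying this to \r{sin10}, whose diagonal weight is $2K\, |r_0|_{\mathcal N^*\Sigma(p)}|^2\, |W_\Sigma|^2\, |w|^2$, and inserting the definition \r{WS} of $W_\Sigma$, namely $W_\Sigma = |v|^{1-n}\kappa^\sharp(p,v/|v|)\kappa(q,-w/|v|)$ with $n=2$ and $\kappa^\sharp = \bar\kappa$ in the geodesic case, so that $|W_\Sigma|^2 |w|^2 = |v|^{-2}|\kappa(p,v/|v|)|^2|\kappa(q,-w/|v|)|^2 |w|^2$; using $|v|=|w|$ (geodesic case) one gets $|W_\Sigma|^2|w|^2 = |\kappa(p,v/|v|)|^2|\kappa(q,-w/|w|)|^2$ up to the bookkeeping of norms. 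Multiplying by $2K$ and by the $2\pi$ from \r{a2}, and dividing by $|\eta|$ coming from the homogeneity normalization of the symbol (the transform \r{7.1} is naturally homogeneous of degree $-1$, and $K=|\eta|/|\xi|$ reinstates the correct power of $|\eta|$), produces $4\pi K|\eta|^{-1}\,|r_0|_{\mathcal N^*\Sigma(p)}|^2\,|\kappa(p,v/|v|)|^2\,|\kappa(q,-w/|w|)|^2$, which is the claimed formula. The identification of $w/|w|$ as the continuous normal to $\eta$ at $q$ and of $v/|v|$ as its parallel transport along $\gamma_{q,w}$ to the conjugate point $p$ is exactly the description of the geometric correspondence already set up before \r{sin8a}.

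The main obstacle is not conceptual but bookkeeping: one must make sure that all the Jacobian factors accumulated in passing from \r{sin5} through \r{sin8a} to \r{sin10} --- the factor $D/\cos\phi$ from \r{sin6}, the symplectic-invariance factor $K^2$, the identity $A\cos\phi = |J'(1)|/|w| = K/|w|$ used to simplify \r{sin9} into \r{sin10}, and the cancellation of $\sqrt{\det g(q)}$ against the diagonal value of the Jacobian $|\det \partial^2(\rho^2/2)/\partial q\partial q'|$ in \r{7.1} --- are tracked consistently, including the two connected components of $\mathcal N^*\Sigma$ near $(q_0,\pm\eta_0)$, where the Heaviside-function split in \r{sin_q} means the computation must be done in each component separately and the results matched. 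A secondary point to be careful about is that \r{sin10} is only an equality modulo a \PDO\ of order $\le -3/2$, so the computed symbol is the principal symbol modulo $S^{-3/2}$, which is all that is asserted; this follows from the error analysis recorded after \r{sin8a} and from the fact that composing with the order-$1/2$ \PDO\ $R$ on each side and using the conormal calculus of \cite[Thm~18.2.12]{Hormander3} only loses half an order at a time relative to the leading conormal symbol.
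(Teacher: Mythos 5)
Your proposal is correct and follows essentially the same route as the paper: the paper's own proof consists precisely of the change-of-variables chain \r{sin5}--\r{sin10} followed by the comparison of \r{sin10} with \r{7.1} and \r{a2} (noting the cancellation of $\sqrt{\det g(q)}$ against the diagonal value of the Jacobian factor), which is exactly the reading-off step you carry out, including the simplifications $|W_\Sigma|^2|w|^2=|\kappa(p,v/|v|)|^2|\kappa(q,-w/|w|)|^2$ via $|v|=|w|$ and the identity $A\cos\phi=K/|w|$. Since you take \r{sin10} as already established and your remaining bookkeeping (the $1/|\eta|$ from the 2D evaluation of the delta in \r{a2}, the single contributing direction due to the support of $\kappa$, and the error remaining in $S^{-3/2}$) matches the paper's argument, there is nothing substantive to add.
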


Later we use the notation $w=\eta^\perp/|\eta^\perp|$, and $v=\xi^\perp/|\xi^\perp|$. 

\begin{proposition}\label{pr_2D_symbol} Let $n=2$. Then 
\[
\Id - A_2^{-1}F^*A_1^{-1}F
\]
is a \PDO\ of order $-1/2$.
\end{proposition}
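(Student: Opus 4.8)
The plan is to compute the full principal symbol of the zeroth order \PDO\ in \r{main_question} and show it vanishes, so that the operator is actually of order $-1$ (hence of order $\le -1/2$ as claimed). The key point is that $A_2^{-1}F^*A_1^{-1}F$ has principal symbol equal to $1$ at $(q_0,\pm\eta_0)$, which forces the leading symbol of $\Id - A_2^{-1}F^*A_1^{-1}F$ to be zero. To carry this out I would factor through the \PDO\ $R$ of order $1/2$ introduced before Lemma~\ref{lemma_FR}: write $F = (F R) R^{-1}$ modulo smoothing, so that
\[
A_2^{-1}F^*A_1^{-1}F = A_2^{-1}(R^{-1})^* (R^*F^*A_1^{-1}FR) R^{-1}
\]
modulo lower order, and insert $A_1^{-1}$ between the two copies of $R$ at the level of principal symbols. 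Since all operators involved have canonical relations that are canonical graphs (by Theorem~\ref{thm_2D} for $F$, and trivially for the \PDO s), the principal symbol of the composition is the product of the principal symbols along the graph, and I can read them all off from the results already established.

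The main computation assembles four ingredients, all available above. First, $\sigma_{\rm p}(A_1)(p,\xi) = 2\pi|\kappa(p,\xi^\perp/|\xi^\perp|)|^2$ and likewise for $A_2$ at $(q,\eta)$, by \r{pr2D} and \r{a2}. Second, $\sigma_{\rm p}(R) = \m$ with $|\m|^2 = \pi^{-1}|\eta| r_0^2$, by \r{sin_q}, \r{sin2}. Third, Lemma~\ref{pr_F\M } gives that $R^*F^*FR$ is a \PDO\ of order $-1$ with principal symbol
\[
4\pi K|\eta|^{-1}\,\big|r_0|_{\mathcal{N}^*\Sigma(p)}\big|^2\,|\kappa(p,v/|v|)|^2\,|\kappa(q,-w/|w|)|^2
\]
at $(q,\eta)$ near $(q_0,\eta_0)$, where $v = \xi^\perp/|\xi^\perp|$ is the parallel transport of $-w = -\eta^\perp/|\eta^\perp|$ to the conjugate point $p$ along $\gamma_{q,w}$. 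Fourth, the canonical graph of $F$ (equivalently, of $FR$) is precisely the map $(p,\xi)\mapsto(q,\eta)$ described in the proof of Theorem~\ref{thm_2D}, under which $\kappa(p,\xi^\perp/|\xi^\perp|)$ is exactly $\kappa(p,v/|v|)$ with $p$ the conjugate point of $q$; so the factor $|\kappa(p,v/|v|)|^2$ appearing in Lemma~\ref{pr_F\M } is the same as the value of $\sigma_{\rm p}(A_1)/(2\pi)$ pulled back along the canonical graph.

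Putting these together: the principal symbol of $A_2^{-1}R^{-*}(R^*F^*FR)R^{-1}$ at $(q_0,\eta_0)$ is
\[
\frac{1}{2\pi|\kappa(q,-w/|w|)|^2}\cdot\frac{\pi}{|\eta|\,|r_0|^2}\cdot 4\pi K|\eta|^{-1}|r_0|^2|\kappa(p,v/|v|)|^2|\kappa(q,-w/|w|)|^2 = \frac{2\pi K|\kappa(p,v/|v|)|^2}{|\eta|^{2}},
\]
but this is not yet the symbol I want, because I have not yet inserted $A_1^{-1}$; the correct grouping is $A_2^{-1}R^{-*}R^*F^* A_1^{-1} FR R^{-1}$, and dividing the middle factor by $\sigma_{\rm p}(A_1)(p,\xi) = 2\pi|\kappa(p,v/|v|)|^2$ evaluated along the graph, together with the identity $K = |\eta^\perp|/|\xi^\perp|$ from \r{sin7} and the homogeneity bookkeeping, collapses everything to the constant $1$. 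Hence $\sigma_{\rm p}\big(A_2^{-1}F^*A_1^{-1}F\big) = 1$ at $(q_0,\pm\eta_0)$, so $\Id - A_2^{-1}F^*A_1^{-1}F$ has vanishing principal symbol as an operator of order $0$ and is therefore a \PDO\ of order $\le -1/2$, which is the assertion. The one subtlety to watch — and I expect this to be the main obstacle — is the careful tracking of the two connected components of $\mathcal{N}^*\Sigma$ (near $(p_0,q_0,-\xi_0,\eta_0)$ and near $(p_0,q_0,\xi_0,-\eta_0)$) and the associated sign/Heaviside choices built into $r_0$ and into the half-densities, so that the phase factors $e^{\pm\i\pi/4}$ from \r{FTz} and \r{sin_q} cancel exactly rather than leaving a residual $\i$; this is precisely why $r_0$ was required to be \emph{even} and the Heaviside weights were inserted, and the verification amounts to checking that the composition respects that parity on each component.
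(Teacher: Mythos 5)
Your proposal is correct and follows essentially the paper's own route: both arguments rest on the principal symbol of $R^*F^*FR$ computed just before the proposition, insert $A_1^{-1}$ via the Egorov-type composition theorem along the canonical graph of $F$, and conclude that $\sigma_p(A_2^{-1}F^*A_1^{-1}F)=1$ near $(q_0,\pm\eta_0)$; the only cosmetic difference is that the paper fixes $r_0$ so that $R$ is essentially a square root of a parametrix of $A_2$, while you keep $r_0$ general and conjugate by $R^{-1}$, dividing out $|r_0|^2$ at the symbol level. Two small corrections: the principal symbols of $A_1^{-1}$ and $A_2^{-1}$ carry the factors $|\xi|$ and $|\eta|$ (the symbol in \r{pr2D} is homogeneous of degree $-1$), and it is exactly these factors together with $K=|\eta|/|\xi|$ from \r{sin7} that make the product collapse to $1$ — this is the ``homogeneity bookkeeping'' you defer; also, vanishing of the principal symbol yields order $-1/2$ rather than your parenthetical claim of order $-1$, since the underlying symbol expansions proceed in half-integer steps coming from the $\sqrt{z'}$ conormal expansion of the kernel of $F$.
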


\begin{proof} 
We apply  Lemma~\ref{pr_FR} with $\pi^{-1/2}e^{\i \pi/4}|\eta|^{1/2}r_0$ being the principal symbol of $A_2^{-1/2}$, see \r{sin_q}, where $A_2^{-1/2}$ is a parametrix of $A_2^{1/2}$ near  $(q_0,\pm \eta_0)$. To this end, choose 
\[
\pi^{-1/2}e^{\i \pi/4}(2\pi)^{-1/2} r_0(q,\eta) =(2\pi)^{-1/2}|\kappa(q,\eta^\perp/| \eta^\perp|)|^{-1},
\]
see \r{pr2D}. Note that $\kappa(q,w/|w|)=k(p,-v/|v|)=0$ because of the assumption on $\supp\kappa$. Then $\big|r_0|_{\mathcal{N}^*\Sigma(p)}\big| = 2^{-1/2}|\kappa(q,-w/|w|)|^{-1}$, where $w$ is as in \r{qw}.  The choice of $r_0$ yields $RR^*=A_2^{-1/2}$ mod $\Psi^{-1}$. So  Lemma~\ref{pr_FR} implies  that $R^*F^*FR$, and therefore  $RR^*F^*F$ and $A_2^{-1}F^*F$, have principal symbol 
\[
\sigma_{\rm p}(A_2^{-1}F^*F)(q,\eta) =  {2}\pi  K |\kappa(p,\xi^\perp/|\xi^\perp|)|^2/|\eta|
\]
We only need to insert $A_1^{-1}$ between $F^*$ and $F$. By  \cite[Thm~25.3.5]{Hormander4},  modulo \PDO s of order $1$ lower, the principal symbol of $A_2^{-1}F^*A_1^{-1}F$ is given by that of $A_2^{-1}F^*F$ multiplied by the principal symbol $\left(2\pi |\kappa(p,v)|^2|/|\xi|\right)^{-1}$ of $A_1^{-1}$ pushed forward by the canonical map of $F$. In other words,
\[
\sigma_p(A_2^{-1}F^*A_1^{-1}F)(q,\eta) = 
\frac{2\pi|\kappa(p,\xi^\perp/|\xi^\perp|)|^2}{|\eta|}K 
 \left[ 2\pi|\kappa((p,\xi^\perp/|\xi^\perp|)|^{2}/|\xi(q,\eta)|)\right]^{-1} = 1 .
\]
\end{proof}

The following lemma is needed below for the proof of Theorem~\ref{thm_cancel}. 

\begin{lemma}\label{lemma_XX}
Let $\kappa_1$ and  $\kappa$  both satisfy the assumptions for $\kappa$ in the Introduction, and let $\kappa(p_0,\theta_0)\not=0$. 
Let $\chi\in \Psi^0$ have essential support near $(p_0,\pm \xi_0)\cup (q_0,\pm \eta_0)$ and Schwartz kernel in $(U_1\times U_1)\cup (U_2\times U_2)$. 
Then there exists  a zero order classical \PDO\ $Q$ with the same support properties so that  
\[
Q X^*_{\kappa}X_\kappa\chi = X^*_{\kappa_1}X_\kappa\chi , \quad \text{mod}\; I^{-3/2}(M\times M, \;\Delta\cup \mathcal{N}^*\Sigma, \; \mathbf{C}),
\]
where $\Delta$ is the diagonal. In particular, $Q X^*_{\kappa}X_\kappa\chi - X^*_{\kappa_1}X_\kappa\chi :H^s\to H^{s+3/2}$ is bounded for any $s$. 
\end{lemma}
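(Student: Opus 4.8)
## Proof plan for Lemma~\ref{lemma_XX}

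The plan is to produce $Q$ as a zeroth order \PDO\ on the ``doubled'' space (thinking of functions microlocalized near $(p_0,\pm\xi_0)$ and near $(q_0,\pm\eta_0)$ as a pair, exactly as in \r{Fmatrix}), with prescribed principal symbol, and then check that $QX^*_\kappa X_\kappa\chi$ and $X^*_{\kappa_1}X_\kappa\chi$ agree to top order on \emph{both} pieces of the wave front set of the kernel, namely on the diagonal $\Delta$ and on $\mathcal{N}^*\Sigma$. The point is that $X^*_\kappa X_\kappa\chi$ and $X^*_{\kappa_1}X_\kappa\chi$ are both sums of a \PDO\ of order $-1$ (the diagonal part, from Theorem~\ref{thm_Duke} applied in the conjugate-point-free region) and an FIO of order $-1$ in the class $I^{-1}(M\times M,\mathcal{N}^*\Sigma;\mathbf{C})$ (the fold part, from Theorems~\ref{thm_main} and~\ref{thm_kernel}); so it suffices to arrange that multiplication by $Q$ matches the two corresponding principal symbols simultaneously, with the discrepancy then landing in $I^{-3/2}$.

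First I would record the two principal symbols. On the diagonal, by Theorem~\ref{thm_Duke}(b) (equivalently \r{a2}, \r{pr2D}) the symbol of $X^*_\kappa X_\kappa$ near $(p_0,\pm\xi_0)$ is $2\pi|\kappa(x,\theta)|^2$ evaluated at the unit vector $\theta$ normal to $\xi$ (only one of the two terms survives because of the support of $\kappa$), and similarly $X^*_{\kappa_1}X_\kappa$ has diagonal symbol $2\pi\,\overline{\kappa_1}\kappa$ at that vector (this is the general weighted formula \r{thm_PDO_eq1} with $\kappa^\sharp=\overline{\kappa_1}$, valid in the conjugate-point-free localization). On $\mathcal{N}^*\Sigma$, by Theorem~\ref{thm_kernel} the FIO part of $X^*_\kappa X_\kappa$ has leading conormal amplitude proportional to $W_\Sigma=|v|^{1-n}\overline{\kappa}(p,v/|v|)\kappa(q,-w/|v|)$ (from \r{WS} with $\kappa^\sharp=\overline\kappa$), while for $X^*_{\kappa_1}X_\kappa$ the corresponding amplitude has $\overline{\kappa_1}(p,v/|v|)$ in place of $\overline\kappa(p,v/|v|)$. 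I would then define $Q$ by requiring its principal symbol, in a conic neighborhood of $(p_0,\pm\xi_0)$, to equal $\overline{\kappa_1}(p,\theta)/\overline{\kappa}(p,\theta)$ at the unit $\theta\perp\xi$ (well defined since $\kappa(p_0,\theta_0)\neq0$), and its principal symbol near $(q_0,\pm\eta_0)$ to equal $1$. One then checks: acting on the left, $Q$ changes the diagonal symbol of $X^*_\kappa X_\kappa\chi$ by the factor $\overline{\kappa_1}/\overline\kappa$ at $(p_0,\pm\xi_0)$ and leaves it unchanged at $(q_0,\pm\eta_0)$ (where $Q$ has symbol $1$), which is exactly the diagonal symbol of $X^*_{\kappa_1}X_\kappa\chi$; and since the canonical relation of $F$ projects $(p,\xi)$ near $(p_0,\pm\xi_0)$ to $(q,\eta)$ near $(q_0,\pm\eta_0)$, composing on the left with $Q$ multiplies the FIO amplitude on $\mathcal{N}^*\Sigma$ by the pullback of $Q$'s symbol at the $p$-end, i.e. by $\overline{\kappa_1}(p,v/|v|)/\overline\kappa(p,v/|v|)$ — again producing precisely the amplitude of the FIO part of $X^*_{\kappa_1}X_\kappa\chi$.

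The routine bookkeeping then is: $Q$ is genuinely a single globally defined \PDO\ (not just a symbol on two disjoint cones), which is fine because $\chi$ restricts everything to the union of two disjoint conic neighborhoods and their respective diagonal blocks $U_1\times U_1$, $U_2\times U_2$, so $Q$ can be taken with kernel supported there and with the two prescribed symbols on the two components; composing a zeroth order \PDO\ with an element of $I^{-1}(M\times M,\Delta\cup\mathcal{N}^*\Sigma;\mathbf{C})$ stays in that class, and matching the principal symbols on each Lagrangian piece forces the difference into $I^{-3/2}(M\times M,\Delta\cup\mathcal{N}^*\Sigma;\mathbf{C})$ by the usual symbol calculus / the conormal calculus of \cite[Thm~18.2.12]{Hormander3} for the $\mathcal{N}^*\Sigma$ part and the standard \PDO\ calculus for the $\Delta$ part. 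The Sobolev mapping statement is then immediate: an operator in $I^{-3/2}(M\times M,\Delta\cup\mathcal{N}^*\Sigma;\mathbf{C})$ is an order $-3/2$ \PDO\ plus an order $-3/2$ FIO associated to a canonical graph (here $\mathcal{N}^*\Sigma$ is a local canonical graph in $2$D by Theorem~\ref{thm_2D}, and in general the statement is microlocalized by $\chi$ to where it is needed), hence $H^s\to H^{s+3/2}$ bounded; alternatively one invokes Proposition~\ref{pr_map}-type estimates for the $X$ factors directly.

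The main obstacle I expect is the consistency of the left-composition identity on the \emph{full} wave front set rather than just symbolically on each Lagrangian: one must make sure that multiplying on the left by $Q$ really does act on the FIO part of $X^*_\kappa X_\kappa$ by pullback of $Q$'s symbol under the \emph{correct} endpoint of the canonical relation $\mathcal{C}$ (the $p$-end, since $Q$ sits on the left) and that this pullback, read on $\mathcal{N}^*\Sigma$, is exactly the substitution $\overline\kappa(p,v/|v|)\mapsto\overline{\kappa_1}(p,v/|v|)$ in $W_\Sigma$ — i.e. that the geometry of $\mathcal{C}$ in \r{C} is compatible with the explicit amplitude \r{WS}. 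Verifying this requires tracking how the base point $p$ and the conjugate direction $v\in S(p)$ (determined by $(p,\xi)$ via the choice of normal and the transversality of the radial ray to $S(p)$, as in the proof of Theorem~\ref{thm_2D}) sit inside $\mathcal{C}$, and checking that $Q$'s symbol, which was defined as a function of $(p,\theta)$ with $\theta\perp\xi$, indeed restricts to $\overline{\kappa_1}/\overline\kappa$ at $\theta=v/|v|$ along $\mathcal{N}^*\Sigma$. This is a finite computation but it is the crux of the lemma; everything else is calculus.
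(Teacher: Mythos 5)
Your overall strategy is the paper's: take $Q$ block-diagonal in the sense of \r{Fmatrix}, prescribe its principal symbol on the two conic neighborhoods, and match leading terms separately on $\Delta$ (standard \PDO\ calculus) and on $\mathcal{N}^*\Sigma$ (conormal calculus, left composition acting by the symbol pulled back to the $p$-end of the canonical relation), with the half-integer amplitude expansion of Theorem~\ref{thm_kernel} pushing the discrepancy into $I^{-3/2}$. The problem is your choice of symbol for $Q$ near $(q_0,\pm\eta_0)$: you take it to be $1$, and this does not reproduce $X^*_{\kappa_1}X_\kappa\chi$. Since $\chi$ has essential support near \emph{both} $(p_0,\pm\xi_0)$ and $(q_0,\pm\eta_0)$, the identity must hold on all four blocks of the matrix \r{Fmatrix}, not only on the first row. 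On the $U_2\times U_2$ diagonal block, the target $\chi_2X^*_{\kappa_1}X_\kappa\chi_2$ is a \PDO\ with principal symbol $2\pi\,\bar\kappa_1\kappa(q,\eta^\perp/|\eta^\perp|)$ (this is \r{thm_PDO_eq1} with $\kappa^\sharp=\bar\kappa_1$), whereas your $Q_2A_2$ has symbol $2\pi|\kappa(q,\eta^\perp/|\eta^\perp|)|^2$; these differ at order $-1$, not $-3/2$, unless $\kappa_1=\kappa$ there, which is not assumed. The same failure occurs on the $\chi_2(\cdot)\chi_1$ block: the target amplitude carries $\bar\kappa_1$ evaluated at the point near $q_0$ (the output variable), while $Q_2F^*$ with symbol $1$ keeps $\bar\kappa$ there. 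So your claim that $Q$ ``leaves the symbol unchanged at $(q_0,\pm\eta_0)$, which is exactly the diagonal symbol of $X^*_{\kappa_1}X_\kappa\chi$'' is false, and with your $Q$ the difference is only in $I^{-1}$, so the asserted $H^s\to H^{s+3/2}$ gain fails.

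The repair is exactly what the paper does: take $Q=Q_1+Q_2$ with kernels in $U_1\times U_1$ and $U_2\times U_2$, with $\sigma_p(Q_1)=\bar\kappa_1(p,\xi^\perp/|\xi^\perp|)/\bar\kappa(p,\xi^\perp/|\xi^\perp|)$ near $(p_0,\pm\xi_0)$ \emph{and} $\sigma_p(Q_2)=\bar\kappa_1(q,\eta^\perp/|\eta^\perp|)/\bar\kappa(q,\eta^\perp/|\eta^\perp|)$ near $(q_0,\pm\eta_0)$, as in \r{k11}--\r{k12}; then all four blocks match at leading order (diagonal blocks give $2\pi\bar\kappa_1\kappa$ at $p$, resp.\ $q$, and the two conormal blocks get $\bar\kappa$ replaced by $\bar\kappa_1$ at their respective output ends, which is precisely the leading singularity of the corresponding blocks of $X^*_{\kappa_1}X_\kappa\chi$). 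Note that this choice requires $\kappa\neq0$ also at $(q_0,-w_0/|w_0|)$, i.e.\ the ellipticity condition \r{ellipt} in force in that section — a hypothesis your symbol-$1$ choice was implicitly trying to avoid, but which cannot be avoided if the conclusion is to hold on the $U_2$ blocks. With that correction, the rest of your argument (conormal composition via \cite[Thm~18.2.12]{Hormander3}, the order bookkeeping, and the mapping property using the canonical graph structure from Theorem~\ref{thm_2D}) goes through as in the paper.
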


\begin{proof} We define $Q=Q_1+Q_2$ where $Q_{1,2}$ have Schwartz kernels in $U_1\times U_1$ and $U_2\times U_2$, respectively. Following the notation convention in \r{Fmatrix}, $Q=\text{diag}(Q_1, Q_2)$.

Then we choose $Q_1$ to have principal symbol
\be{k11}
\bar \kappa_1(p,\xi^\perp/|\xi^\perp|)/ \bar\kappa(p,\xi^\perp|/|\xi^\perp)
\ee
in a conic neighborhood of $(p_0,\pm\xi_0)$ with the same choice of $\xi^\perp$ as in \r{pr2D}. Next, we choose $Q_2$ with a principal symbol
\be{k12}
\bar \kappa_1(q,\eta^\perp/|\eta^\perp|)/\bar \kappa(q,\eta^\perp|/|\eta^\perp)
\ee
in a conic neighborhood of $(q_0,\pm\eta_0)$. Then 
\[
Q X^*_{\kappa}X_\kappa= \begin{pmatrix}
 Q_1A_1&Q_1F\\Q_2F^*&Q_2A_2
\end{pmatrix}.
\]
Then, see \r{pr2D}, 
\[
\sigma_{\rm p}(Q_1A_1) = 2\pi( \bar\kappa_1\kappa)(p,\xi^\perp/|\xi^\perp|)|
, \quad 
\sigma_{\rm p}(Q_2A_2) = 2\pi (\bar\kappa_1\kappa)(q,\eta^\perp/|\eta^\perp|).
\]
For $Q_1F$, $Q_2F^*$, we use the arguments used in the proof of Lemma~\ref{lemma_FR}. A representation of the Schwartz kernel of $F'$ as a conormal distribution is given by \r{sin3a}. The composition $Q_2F^*$ then is of the same conormal type with a principal symbol equal to the complex conjugate of that of $F'$ multiplied by the symbol \r{k12} restricted to $\mathcal{N}^*\Sigma$. This replaces $k^\sharp= \bar\kappa$ in \r{WS} by $\bar \kappa_1$. Since in \r{WS}, $\kappa^\sharp=\bar\kappa$ we get that $Q_2F^*$ is of the same conormal type with leading singularity as in Theorem~\ref{thm_kernel}, with 
\[
W_\Sigma = |v|^{-1} \bar \kappa(p,v/|v|)\kappa_1(q,-w/|w|).
\]
This is however the leading singularity of $\chi_2X_{\kappa_1}^*X_\kappa\chi_1$.

The proof for $Q_1F$ is the same with the roles of $p$ and $q$ replaced. 
\end{proof}

\bigskip

\subsection{Cancellation of singularities on Riemannian surfaces} Assume in all dimensions that  there are no conjugate points on the geodesics in $M$, and that $\bo$ is strictly convex. Let $M_1\supset M$ be an extension of $M$ so that the interior of $M_1$ contains $M$ be as in Remark~\ref{remark_global}. Then if $\kappa\not=0$, 
\be{L1}
\|f\|_{L^2(M)}\le C\|X^*Xf\|_{H^1(M_1)}+ C_k\|f\|_{H^{-k}(M)}, \quad \forall f\in L^2(M),
\ee
for all $k\ge0$,  
see \cite{SU-Duke, FSU}, and \cite{SU-AJM} for a class of manifolds with conjugate points. When we know that $X$ is injective, for example when the weight is constant; then we can remove the $H^{-k}$ term. 
The same arguments there show that for any $s\ge0$,
\be{L2}
\|f\|_{H^s(M)}\le C\|X^*Xf\|_{H^{s+1}(M_1)}+ C_k\|f\|_{H^{-k}(M)},\quad \forall f\in H_0^s(M). 
\ee
Consider $Xf$ parameterized by points in $\partial_+SM_1$, that defines Sobolev spaces for $Xf$ as in section~\ref{sec_map}. Then 
\be{L3}
\|f\|_{H^s(M)}\le C\|Xf\|_{H^{s+1/2}(\partial_+SM_1)}+ C_k\|f\|_{H^{-k}(M)},\quad \forall f\in H_0^s(M), \; s\ge0.
\ee
Indeed, in Proposition~\ref{pr_map}, one can complete $M_1$ and $\mathcal{H}$ to closed manifolds, and then we would get that $X^*:H^{s}\to H^{s+1/2}$ is bounded. Then \r{L3} follows by \r{L2}. 
Estimate \r{L3} is  sharp in view of Proposition~\ref{pr_map}. In the following theorem, we show that \r{L1}, \r{L3} fail in the 2D case, with a loss at least of one derivative in the first one, and $1/2$ derivative in the second one. 

\begin{theorem}  \label{thm_cancel}
Let $n=2$, and let $\gamma_0$ be a geodesic of $g$ with conjugate points satisfying the assumptions in section~\ref{sec_2}. Then for each $f_2\in H^s(M)$, $s\ge0$,  with $\WF(f_2)$ in  a small neighborhood of $(q_0,\pm\eta^0)$, there exists $f_1\in H^s(M)$  with $\WF(f_1)$ in  a some neighborhood of $(p_0,\pm \xi^0)$ so that 
\[
Xf\in H^{s+3/4}\quad \mbox{and} \quad   X^*Xf\in H^{s+3/2}, \quad\text{where $f:= f_1+f_2$}. 
\]
In particular, if $(M,g)$ is a non-trapping Riemannian surface with boundary  with fold type of conjugate points on some geodesics, none of the inequalities \r{L1}, \r{L3} can hold. 
\end{theorem}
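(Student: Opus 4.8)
The strategy is to choose $f_1$ so that the piece of $Nf=X^{*}Xf$ living over $p_0$ is annihilated, modulo smoothing, by the Fourier integral part of $N$. Put $A_1=\chi_1N\chi_1$, $A_2=\chi_2N\chi_2$, which are classical \PDO s of order $-1$ by Theorem~\ref{thm_PDO}, elliptic near $(p_0,\pm\xi_0)$, resp.\ $(q_0,\pm\eta_0)$, by \r{pr2D} (here we use $\kappa(p_0,\theta_0)\neq0$, implicit in the hypothesis since otherwise \r{L1}, \r{L3} are vacuous); and let $F=F_{12}=\chi_1N\chi_2$, $F^{*}=F_{21}=\chi_2N\chi_1$ be the FIOs of order $-1$ with canonical relations $\mathcal{C}$, $\mathcal{C}^{-1}$, which by Theorem~\ref{thm_2D} are local canonical graphs taking a small conic neighborhood of $(q_0,\pm\eta_0)$ onto one of $(p_0,\pm\xi_0)$ and back. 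Given $f_2$, set
\[
f_1:=-A_1^{-1}Ff_2 ,
\]
with $A_1^{-1}$ a microlocal parametrix of $A_1$ near $(p_0,\pm\xi_0)$, properly supported with Schwartz kernel in a small neighborhood of $p_0$. Since $A_1^{-1}F$ is an FIO of order $0$ with canonical relation $\mathcal{C}$, we get $f_1\in H^s(M)$ with $\WF(f_1)\subset\mathcal{C}(\WF(f_2))$, which is a small neighborhood of $(p_0,\pm\xi_0)$, and $f_1$ is supported near $p_0$. Nesting the cutoffs suitably (first fix $\chi_1,\chi_2$, then observe $\supp f_1$ lands inside $\{\chi_1=1\}$) we may assume $\chi_1f_1=f_1$, $\chi_2f_2=f_2$, so that \r{N1}, \r{N2} apply verbatim to $f=f_1+f_2$.

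By \r{N1}, $\chi_1Nf=A_1f_1+Ff_2=(\Id-A_1A_1^{-1})Ff_2\in C^\infty$, since $\Id-A_1A_1^{-1}$ is smoothing on a conic neighborhood of $(p_0,\pm\xi_0)\supset\WF(Ff_2)$. By \r{N2}, $\chi_2Nf=A_2f_2+F^{*}f_1=A_2\big(\Id-A_2^{-1}F^{*}A_1^{-1}F\big)f_2$, and by Proposition~\ref{pr_2D_symbol} the operator $\Id-A_2^{-1}F^{*}A_1^{-1}F$ is a \PDO\ of order $-1/2$, so $\chi_2Nf$ is the image of $f_2$ under a \PDO\ of order $-3/2$, hence lies in $H^{s+3/2}$. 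Finally $\WF(Nf)$ stays in the union of the two conic neighborhoods above, as $N=A+F$ with $A$ a \PDO\ and $F$ an FIO whose canonical relation interchanges them; choosing $\chi_1+\chi_2=1$ near $\{p_0,q_0\}$ we get $Nf=\chi_1Nf+\chi_2Nf$ modulo $C^\infty$, hence $X^{*}Xf=Nf\in H^{s+3/2}$ by Proposition~\ref{pr_N}.

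For $Xf$ I would use a half-order duality argument. Let $\Lambda\in\Psi^1(\mathcal{H})$ be a positive elliptic classical \PDO\ on the geodesic manifold and put $\mu=2s+3/2$; then $\|Xf\|_{H^{s+3/4}}^2\asymp\langle\Lambda^{\mu}Xf,Xf\rangle_{\mathcal{H}}=\langle X^{*}\Lambda^{\mu}Xf,f\rangle_M$. By the same analysis that yields Theorem~\ref{thm_main} and Lemma~\ref{lemma_FR} (tensored with $\Lambda^{\mu}$), $X^{*}\Lambda^{\mu}X$ is a \PDO\ of order $\mu-1$ near the diagonal plus an FIO of order $\mu-1$ associated to $\mathcal{N}^{*}\Sigma$, with all principal symbols equal to the ones for $X^{*}X$ multiplied by the value $\lambda$ of the symbol of $\Lambda^{\mu}$ on the geodesic joining the conjugate pair — the same scalar at the $p$- and $q$-ends, since the canonical relation of $X$ identifies the two corresponding covectors over $\gamma$. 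Writing $X^{*}\Lambda^{\mu}X$ as a $2\times2$ block matrix with respect to the splitting near $(p_0,\cdot)$ and $(q_0,\cdot)$ and substituting $f_1=-Gf_2$, $G=A_1^{-1}F$, the pairing collapses exactly as in the proof of Proposition~\ref{pr_2D_symbol} to $\langle Pf_2,f_2\rangle$ with
\[
P=G^{*}\tilde A_1G-G^{*}\tilde F-\tilde F^{*}G+\tilde A_2 ,
\]
$\tilde A_i,\tilde F$ the blocks of $X^{*}\Lambda^{\mu}X$, a priori a \PDO\ of order $\mu-1$. Its order-$(\mu-1)$ principal symbol equals $\lambda\cdot\sigma\big(A_2-F^{*}A_1^{-1}F\big)$, which vanishes since $A_2-F^{*}A_1^{-1}F=A_2(\Id-A_2^{-1}F^{*}A_1^{-1}F)$ is of order $-3/2$ by Proposition~\ref{pr_2D_symbol}; hence $P$ is a \PDO\ of order $\mu-3/2=2s$, so $|\langle Pf_2,f_2\rangle|\le C\|f_2\|_{H^s}^2$ and $Xf\in H^{s+3/4}$. (The cross terms pairing $f_1$ against $f_2$ through the \PDO\ blocks are smoothing, as $\supp\chi_1$ and $\supp\chi_2$ may be kept away from the diagonal.) This is the step I expect to be the main obstacle: verifying that the cancellation of Proposition~\ref{pr_2D_symbol} survives insertion of the elliptic factor $\Lambda^{\mu}$, i.e.\ that $X^{*}\Lambda^{\mu}X$ really has the \PDO$+$FIO($\mathcal{N}^{*}\Sigma$) structure with all principal symbols merely rescaled by the single scalar $\lambda$. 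One can route around this via the weight-changing Lemma~\ref{lemma_XX}, reducing to a convenient weight and tracking the leading conormal symbol along $\Sigma$ directly as in Theorem~\ref{thm_kernel} and Lemma~\ref{lemma_FR}; either way the bookkeeping of conormal symbols on $\Sigma$ is the technical core.

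The failure of \r{L1}, \r{L3} then follows by frequency scaling: apply the construction to a sequence $f_2^{(j)}$ microlocalized near $(q_0,\pm\eta_0)$ at frequency $\sim j$, normalized so $\|f_2^{(j)}\|_{L^2}=1$ for \r{L1} (resp.\ $\|f_2^{(j)}\|_{H^s}=1$ for \r{L3}). As $\WF(f_1^{(j)})$ and $\WF(f_2^{(j)})$ are separated by a conic neighborhood, $\|f^{(j)}\|$ is bounded below in the relevant norm, while the two estimates just proved give $\|X^{*}Xf^{(j)}\|_{H^1}=O(j^{-1/2})$, $\|Xf^{(j)}\|_{H^{s+1/2}}=O(j^{-1/4})$ and $\|f^{(j)}\|_{H^{-k}}=O(j^{-k})$; thus the right-hand sides of \r{L1}, \r{L3} tend to $0$ while the left-hand sides do not. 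Since a fold conjugate pair on a non-trapping surface with strictly convex boundary lies in the interior of the extension $M_1$, this shows \r{L1} and \r{L3} cannot hold.
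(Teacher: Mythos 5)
Your construction $f_1=-A_1^{-1}Ff_2$ and the proof that $X^*Xf\in H^{s+3/2}$ coincide with the paper's argument, and your duality reduction of the $Xf$ estimate to boundedness of $\chi^*(\Id-A_1^{-1}F)^*X^*\Lambda^{\mu}X(\Id-A_1^{-1}F)\chi$ on $H^s$, with $\mu=2s+3/2$, is also the paper's strategy, cf.\ \r{act2}. The gap is exactly the step you flag yourself: the assertion that $X^*\Lambda^{\mu}X$, for a general elliptic $\Lambda^{\mu}\in \Psi^{\mu}(\mathcal{H})$, is again a \PDO\ plus an FIO associated to $\mathcal{N}^*\Sigma$ whose principal symbols are those of $X^*X$ rescaled by a single scalar $\lambda$ does not follow ``by the same analysis that yields Theorem~\ref{thm_main}.'' That analysis rests on the explicit Schwartz kernel of compositions $X^*_{\kappa^\sharp}X_\kappa$ of two weighted ray transforms (see \r{Xstar}, \r{NStar} and Theorem~\ref{thm_kernel}); once an arbitrary \PDO\ on $\mathcal{H}$ is inserted between $X^*$ and $X$, the kernel is no longer of that form, and one cannot fall back on the transversal or clean composition calculus either, because the canonical relation of $X$ degenerates (its projections drop rank) precisely at the conjugate covectors --- that degeneracy is the very reason the paper analyzes the kernel by hand. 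So, as written, the central analytic claim of your second half is asserted rather than proved.

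The paper closes this gap along the fallback you mention: having established $X^*X(\Id-A_1^{-1}F)\chi:H^s\to H^{s+3/2}$ (your first step, recorded as \r{act}), it takes the operator on $\mathcal{H}$ of integer order built from derivatives in the $(x',\xi')$ coordinates, so that $P^*Xf$ is again a finite sum of weighted ray transforms of derivatives of $f$, hence $X^*P=\sum\tilde Q_jX_j^*$ as in \r{act4}; Lemma~\ref{lemma_XX} then converts each $\tilde Q_jX_j^*X\chi$ into $R_jX^*X\chi$ modulo conormal kernels of order $-3/2$, which allows one to reuse \r{act} instead of recomputing any symbol cancellation with $\Lambda^{\mu}$ inserted; non-integer orders are handled by interpolation as in Proposition~\ref{pr_map}. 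To repair your route you would either have to carry out this reduction (note $\mu=2s+3/2$ is generically non-integer, so $\Lambda^{\mu}$ cannot be taken to be a differential operator directly), or prove the structure theorem for $X^*\Lambda^{\mu}X$ near the fold from scratch, e.g.\ by redoing Theorem~\ref{thm_kernel} for the kernel of $X^*\circ(\Lambda^{\mu}X)$. Your principal-symbol bookkeeping --- the common covector in $T^*\mathcal{H}$ over a conjugate pair and the resulting vanishing of the leading symbol of your operator $P$ --- is the correct outcome, but it is a consequence of that missing structure theorem, not a substitute for it. The concluding frequency-scaling argument against \r{L1} and \r{L3} is fine.
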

\begin{remark}
It is an open problem whether  we can replace $H^{s+3/2}$ and $H^{s+2}$ above with $C^\infty$. See Section~\ref{sec_circle} for an example where this can be done. 
\end{remark}

\begin{remark}
If there are no conjugate points, one has $Xf\in H^{s+1/2}$, $X^*Xf \in H^{s+1}$. Therefore, the conjugate points are responsible for an $1/4$ derivative smoothing for $X f$, and an $1/2$ derivative  smoothing for $X^*Xf$ 
\end{remark}

\begin{proof}
Let $f_2$ be as in the theorem. Set
\[
f_1 = -A_1^{-1}Ff_2,
\]
where, as before, $A_1^{-1}$, $A_2^{-1}$ are parametrices of $A_{1,2}$ in  conic neighborhoods of  $(p_0,\pm \xi_0)$ and $(q_0,\pm\eta_0)$, respectively. Then $f_1$ belongs to $H^s$ and has a wave front set in small neighborhood of $(p_0\pm\xi_0)$, by Theorem~\ref{thm_main}. By construction and by \r{N1},
\be{L4}
\chi_1 X^*Xf\in C^\infty.
\ee
Next, by \r{L4},
\[
A_2 f_2 +F^*f_1 = A_2 f_2 -F^* A_1^{-1}Ff_2 =(A_2  -F^* A_1^{-1}F)f_2.
\]
The operator in the parentheses is a \PDO\ of order $-3/2$ by Proposition~\ref{pr_2D_symbol}. Therefore, see \r{N2},
\[
\chi_2 X^*Xf = A_2 f_2 +F^*f_1 \in H^{s+3/2}. 
\]
We therefore get  $X^*Xf\in H^{s+3/2}(U_1\cup U_2)$. 

To prove $Xf\in H^{s+1}$, note first that above we actually proved that
\be{act}
X^*X(\Id - A_1^{-1}F)\chi : H^s(U_2) \longrightarrow H^{s+3/2}(U_1\cup U_2)
\ee
is bounded, being a \PDO\ of order $-3/2$, where $\chi$ denotes a zero order \PDO\ with essential support in a small neighborhood of $(p_0,\pm\eta_0)$ and Schwartz kernel supported in $U_2\times U_2$. 

Our goal is to show that
\[
X(\Id - A_1^{-1}F)\chi : H^s(U_2) \longrightarrow H_0^{s+3/4}(\mathcal{H})
\]
is bounded. It is enough to prove that 
\be{act2}
\chi^*(\Id - A_1^{-1}F)^*X^* P_{2s+3/2} X(\Id - A_1^{-1}F)\chi : H^s(U_2) \longrightarrow H^{-s}(U_2)
\ee
for any \PDO\ $P_{2s+3/2}$ of order $2s+3/2$ on $\mathcal{H}$. All adjoints here are in the corresponding $L^2$ spaces. By \r{act},
\be{act3}
Q_{2s+3/2}X^*X (\Id - A_1^{-1}F)\chi : H^s(U_2) \longrightarrow H^{-s}(U_2)
\ee
is bounded for any \PDO\ $Q_{2s+3/2}$ of order $2s+3/2$.

To deduce \r{act2} from \r{act3}, it is enough to ``commute'' $X^*$ with $P_{2s+3/2}$ in \r{act2}. Let $2s+3/2$ be a non-negative integer first. As in the proof of Proposition~\ref{pr_map}, we use the fact that $X^* P_{2s+3/2}= (P_{2s+3/2}^*X)^*$, and $P_{2s+2}^*Xf$ is a finite  sum of X-ray transforms with various weights of derivatives of $f$ of order not exceeding $2s+2$. Thus we can write
\be{act4}
X^* P_{2s+2} =  \sum \tilde Q_j X_j^*,
\ee
where $Q_j$ are differential operators on $\mathcal{H}$ of degree $2s+3/2$ or less, and $X_j$ are like $X$ in \r{1.1} but with different weights still supported where $\kappa$ is supported. By Lemma~\ref{lemma_XX}, $\tilde Q_jX_j^*X=  R_jX^*X$, where $R_j$ is a \PDO\ of the same order as $\tilde Q_j$.  The proof of \r{act2} is then completed by the observation that $\chi^*(\Id - A_1^{-1}F)^*$ maps continuously $H^s$ into itself, since the canonical relation of $F$ is canonical graph. 
\end{proof}

\section{Examples}\label{sec_ex}
In this section, we present a few examples. We start in Section~\ref{sec_circle} with the fixed radius circular transform in the plane, where we can have cancellation of singularities similarly to Theorem~\ref{thm_cancel} but we show that this happens to any order. Then we consider in Section~\ref{sec_sphere} the geodesic X-ray transform on the sphere, where the conjugacy is not of fold type, but a similar result holds. Next, in Section~\ref{sec_mag}, we study an example of magnetic geodesics in the Euclidean space $\R^3$ with a constant magnetic field. We show that then the canonical relation of $F$ a canonical graph, and therefore, one can resolve the singularities. Finally, in Section~\ref{sec_prod}, we present an example of a Riemannian manifold of product type where the graph condition is violated.

\subsection{The fixed radius circular transform in the plane} \label{sec_circle} 
Let $R$ be the integral transform in $\R^2$ of integrating functions over circles of radius $1$. We fix the negative  orientation on those circles; then for each $(x,\xi)\in S\R^2$, there is a unique unit circle passing through $x$ in the direction of $\theta$. It is very easy to see that the first conjugate point appears at ``time'' $\pi$. The next one is at $2\pi$, that equals the period of the curve. If one originally chooses $f$ supported near, say $(0,0)$ and $(2,0)$; and chooses $\gamma_0$ to be the arc of the circle that is a small extension of $\{|x_1-1|^2+x_2^2=1, \; x_2\ge0\}$,  then we are in the situation studied above. On the other hand, if we do not impose any assumptions on $\supp f$, we will get contributions that are smoothing operators only. Therefore, we do not need to restrict $\supp f$.

The conjugate locus  $\Sigma(x)$ is the circle 
\[
\Sigma(x) = \{y; \; |y-x|=2\}
\]
that is the envelope of all circles of radius $1$ passing through $x$. It follows immediately that
\[
S_x(v)  = \{v;\; |v|=\pi\}, \quad N_x(v) = \R e^{\i\alpha}(2/\pi ,-1),
\]
where we used complex identification to denote rotation by the angle $\alpha = \arg(v)$. Hence, $S$ is a fold conjugate locus. The other assumptions of the dynamical system are easy to check.

It is much more natural to parametrize those circles by their centers, we use the notation $C(x)$. Then
\be{1}
Rf(x) = \int_{C(x)} f\, \d\ell = \int_{|\omega|=1} f(x+\omega)\, \d\ell_{\omega} = \int_0^{2\pi} f(z+e^{\i\alpha})\,\d\alpha.
\ee
Those circles are also magnetic geodesics w.r.t.\ the Euclidean metric and a constant non-zero Lorent\-zian force. 
 Note that the ``geodesics'' are naturally parametrized by a point in $\R^2$ as well (and that parametrization reflects the choice of the measure w.r.t.\ which we take $R^*$, it is not hard to see that this is the same measure that we had before).

\subsubsection{$R$ as a convolution} It is well known and easy to see that $R$ is a convolution with the delta function $\delta_{S^1}$ of the unit circle
\[
Rf = \delta_{S^1}*f.
\]
Fourier transforming, we get
\be{2}
R = 2\pi\mathcal{F}^{-1} J_0(|\xi|) \mathcal{F},
\ee
where $J_0$ is the Bessel function of order $0$. This shows that
\be{3}
R^*R = (2\pi)^2\mathcal{F}^{-1} J_0^2(|\xi|) \mathcal{F}.
\ee
Note that $J_0^2(|\xi|)$ is not a symbol because it oscillates. In principle, one can use this representation to analyze $R^*R$.

\subsubsection{Integral representation} We write
\be{4}
\begin{split}
(Rf,Rh) &= \int\int_{|\omega|=1} f(x+\omega)\,\d\ell_\omega \int_{|\theta|=1} \bar h(x+\theta)\,\d\ell_\theta\,dx\\
 &= \int \int_{|\omega|=1} \int_{|\theta|=1}f(y+\omega-\theta)\bar h(y)\,  \d\ell_\omega\, \d\ell_\theta\, \d  y.
\end{split}
\ee
Therefore,
\be{5}
R^*Rf(x) = \int_{|\omega|=1} \int_{|\theta|=1}f(x+\omega+\theta) \,  \d\ell_\omega\, \d\ell_\theta,
\ee
compare with  \r{Xstar}.

We will make the change of variables $z=\omega+\theta$. For $0<|z|<2$, there are exactly two ways $z$ can be represented this way. 
%For $|z|=2$, there is a unique way, and for $|z|>2$, there is none. If $z=0$, there are infinitely many ways. 
%Let us  study the case $0<|z|<2$ first. 
Write $\omega=e^{\i\alpha}$, $\theta=e^{\i\beta}$. Since $\d\ell_\omega= \d\alpha$, $\d\ell_\theta= \d\beta$, and $\d z_1 \wedge\d z_2 = (-2 i)^{-1}\d z\wedge\d \bar z$, we get
\[
\begin{split}
\d z_1 \wedge\d z_2 &= \frac{1}{-2\i}\left(\i  e^{\i\alpha}\d\alpha +   \i  e^{\i\beta}\d\beta        \right)\wedge \left(-\i  e^{-\i\alpha}\d\alpha -   \i  e^{-\i\beta}\d\beta        \right) = \sin(\beta-\alpha) \,\d\alpha\wedge \d\beta\\
  &= \sin(\beta-\alpha)\, \d\ell_\omega\wedge \d\ell_\theta.
\end{split}
\]
It is easy to see that $|\beta-\alpha|$ equals twice the angle between $z=\omega+\theta$ and $\theta$. Let $r=|z|$. Then $r/2= \cos\frac{|\alpha-\beta|}2$. Elementary calculations then lead to
\[
\sin|\alpha-\beta|= \frac{r}2\sqrt{4-r^2}.
\]
Therefore, \r{5} yields the following.
 
\begin{proposition}  \label{pr1} Let $R$ be  the circular transform defined above. Then 
\be{6}
R^*Rf(x) = \int_{r<2} \frac{4}{r\sqrt{4-r^2}}f(y)\,\d y, \quad r := |x-y|.
\ee
\end{proposition}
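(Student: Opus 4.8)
The plan is to start from the integral identity \r{5}, namely $R^*Rf(x)=\int_{|\omega|=1}\int_{|\theta|=1}f(x+\omega+\theta)\,\d\ell_\omega\,\d\ell_\theta$, and to compute the push-forward of the measure $\d\ell_\omega\,\d\ell_\theta$ on $S^1\times S^1$ under the map $\Phi:(\omega,\theta)\mapsto z:=\omega+\theta\in\R^2$. First I would record the elementary geometry of $\Phi$: its image is the disk $\{|z|\le 2\}$, it is two-to-one over the punctured open disk $0<|z|<2$ (the two preimages of a given $z$ being interchanged by swapping $\omega$ and $\theta$), and the fibers over $\{|z|=2\}$ and over $\{z=0\}$ have measure zero and can be discarded. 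Away from those exceptional sets $\Phi$ is, on each of its two branches, a local diffeomorphism, so the change of variables is legitimate branch by branch.

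Next I would carry out the Jacobian computation that already appears in the text preceding the statement: writing $\omega=e^{\i\alpha}$, $\theta=e^{\i\beta}$ so that $\d\ell_\omega=\d\alpha$, $\d\ell_\theta=\d\beta$, one gets $\d z_1\wedge\d z_2=\sin(\beta-\alpha)\,\d\alpha\wedge\d\beta$, hence the pull-back density of $\d z$ is $1/|\sin(\beta-\alpha)|$ on each branch, and the total push-forward density is that times the multiplicity $2$. Then I would express $|\sin(\beta-\alpha)|$ purely in terms of $r:=|z|$: from $r^2=|e^{\i\alpha}+e^{\i\beta}|^2=4\cos^2\tfrac{\alpha-\beta}{2}$ one has $r/2=|\cos\tfrac{\alpha-\beta}{2}|$, and therefore $\sin|\alpha-\beta|=2\big|\sin\tfrac{\alpha-\beta}{2}\big|\big|\cos\tfrac{\alpha-\beta}{2}\big|=2\sqrt{1-r^2/4}\cdot(r/2)=\tfrac{r}{2}\sqrt{4-r^2}$, which is the same on both preimages. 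Combining, the push-forward of $\d\ell_\omega\,\d\ell_\theta$ equals $\dfrac{2}{\tfrac r2\sqrt{4-r^2}}\,\d z=\dfrac{4}{r\sqrt{4-r^2}}\,\d z$ on $\{0<|z|<2\}$; substituting back into \r{5} and renaming $y=x+z$ (so $r=|x-y|$) gives exactly \r{6}.

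The content of the argument is thus almost entirely the two displayed computations that already appear just before the proposition; what still has to be checked carefully — and this is the only genuinely non-routine point — is the measure-theoretic bookkeeping: that the change-of-variables formula applies despite $\Phi$ not being injective (resolved by the two-branch splitting above), and that the resulting kernel $\dfrac{4}{r\sqrt{4-r^2}}$, although singular at $r=0$ and at $r=2$, is in $L^1_{\mathrm{loc}}(\R^2)$, so the right-hand side of \r{6} is a well-defined absolutely convergent integral for $f\in C_0^\infty(\R^2)$ (near $r=0$ the singularity is $\sim 1/r$, integrable against $\d y=r\,\d r\,\d\alpha$ in the plane; near $r=2$ it is $\sim(2-r)^{-1/2}$, also integrable). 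Once this is in place, both sides of \r{6} are continuous linear functionals of $f$ that agree after the change of variables, and the proof is complete.
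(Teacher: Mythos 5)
Your proposal is correct and follows essentially the same route as the paper: starting from \r{5}, performing the two-to-one change of variables $z=\omega+\theta$ with the Jacobian $\d z_1\wedge\d z_2=\sin(\beta-\alpha)\,\d\alpha\wedge\d\beta$ and the identity $\sin|\alpha-\beta|=\tfrac r2\sqrt{4-r^2}$, and accounting for the multiplicity $2$ to obtain the kernel $4/(r\sqrt{4-r^2})$. The extra bookkeeping you add (discarding the measure-zero sets $|z|=0,2$ and checking local integrability of the kernel) is a harmless refinement of the argument already given in the paper.
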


\subsubsection{$R^*R$ as an FIO} 
The kernel has  singularities near the diagonal $x=y$, and also near 
\[
\Sigma = \{|x-y|=2\}.
\] 
That singularity is of the type $(2-|x-y|)^{-1/2}$, and for a fixed $x$  the expression $2-|x-y|$ measures the distance from the circle $\Sigma(x)$ to the point $y$ inside that circle. We therefore get the same singularity as in Theorem~\ref{thm_kernel}. Note also that
\be{Ap1}
\mathcal{N}^*\Sigma = \{ (x,x\pm2\xi/|\xi|,\xi,-\xi); \;  \xi\in \R^2 \setminus 0 \}.
\ee

Based on Proposition~\ref{pr1}, and Theorem~\ref{thm_main}, we conclude that $R^*R$ is an FIO of order $-1$ with a canonical relation $\mathcal{C}$ of the following type. We have that $(x,\xi,y,\eta)\in \mathcal{C}$ if and only if $(y,\eta)=(x,\xi)$ (that gives us the \PDO\ part), or $(y,\eta) = (x\pm2\xi/|\xi|,\xi)$.

This can also be formulated also in the following form. 
\begin{theorem}\label{thm1} Let $R$ be  the circular transform defined above. Then, modulo $\Psi^{-\infty}$, 
\be{eq:thm9}
R^*R = A_0 + F_+ + F_-,
\ee
where $A_0$, $F+$ and $F_-$ are Fourier multipliers with the properties

(a) $A_0=4\pi |D|^{-1}$ mod $\Psi^{-1}$; 

(b) $F_\pm$ are elliptic FIOs of order $-1$ with canonical relations of a graph type given by 
\be{9a}
\mathcal{F}_\pm: 
(x,\xi) \mapsto (x\pm 2\xi/|\xi|,\xi).
\ee

(c) $F_-=F_+^*$. 
\end{theorem}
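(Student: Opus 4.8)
The plan is to read off everything from the explicit formula \r{6} in Proposition~\ref{pr1}, together with the standard theory of conormal distributions. First I would observe that since $R^*R$ is a convolution operator (by \r{3}), all three pieces $A_0$, $F_+$, $F_-$ must be Fourier multipliers; this is automatic once we show $R^*R$ decomposes into a $\Psi$DO plus FIOs, because a translation-invariant FIO of graph type with the canonical relation \r{9a} is necessarily a Fourier multiplier (its symbol depends on $\xi$ only). So the real content is the \emph{local} structure of the Schwartz kernel $k(x,y)=\tfrac{4}{r\sqrt{4-r^2}}\mathbf 1_{r<2}$, $r=|x-y|$, near the two strata $\Delta=\{x=y\}$ and $\Sigma=\{|x-y|=2\}$, and this is exactly what Theorem~\ref{thm_main} and Theorem~\ref{thm_kernel} were built to handle — but here it can also be checked by hand. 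Near $r=0$ the kernel behaves like $2/r$ plus a smooth function of $r^2$, i.e.\ a classical conormal symbol of the diagonal of order $-1$ (in $\R^2$, $|x-y|^{-1}$ has Fourier transform $\sim 2\pi|\xi|^{-1}$), giving the $\Psi$DO part with principal symbol $4\pi|\xi|^{-1}$; this proves (a) and identifies $A_0 = 4\pi|D|^{-1}\bmod\Psi^{-1}$. Alternatively, (a) is immediate from Theorem~\ref{thm_Duke}(b) applied locally (away from conjugate points), since the principal symbol \r{a2} with $\kappa\equiv1$ and $\d\sigma_x$ the arclength on the unit circle gives $2\pi\int_{S^1}\delta(\xi(\theta))\,\d\sigma_x(\theta) = 2\pi\cdot 2/|\xi| = 4\pi/|\xi|$.

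Next I would analyze the singularity at $r=2$. Writing $r = 2 - s$ with $s$ the signed distance to $\Sigma(x)$ (positive inside), the factor $\sqrt{4-r^2}=\sqrt{(2-r)(2+r)} = \sqrt{s}\sqrt{4-s}$, so near $s=0$ the kernel is $\frac{4}{r\sqrt{4-s}}\cdot s^{-1/2} = c(s)\,s_+^{-1/2}$ with $c$ smooth, $c(0)=\sqrt2$. This is precisely the conormal singularity of order $1/2$ along $\Sigma$ supported on one side, exactly as in Theorem~\ref{thm_kernel} (here $A=D=1$ in suitable normalization). The Fourier transform of $s_+^{-1/2}$ is given by \r{FTz}, so after cutting off the low frequencies we get an FIO of order $-n/2 = -1$ associated with the Lagrangian $\mathcal N^*\Sigma$; the canonical relation is \r{Ap1}, which splits into the two connected components $(x,\xi)\mapsto(x\pm 2\xi/|\xi|,\xi)$. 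Each is a canonical graph (translation by $\pm 2\xi/|\xi|$, the identity on the fiber), so we may write $F_\pm$ for the pieces supported near each component; ellipticity holds because the symbol $c(0)=\sqrt2\neq0$. This gives (b). Finally (c) $F_-=F_+^*$ follows because $R^*R$ is self-adjoint while $A_0$ is self-adjoint, so $F_++F_-$ is self-adjoint; since the two canonical relations in \r{Ap1} are inverse to each other (the map $(x,\xi)\mapsto(x+2\xi/|\xi|,\xi)$ and its inverse $(x,\xi)\mapsto(x-2\xi/|\xi|,\xi)$), taking adjoints swaps $F_+$ and $F_-$, and self-adjointness of the sum forces $F_-=F_+^*$ exactly, not merely modulo lower order.

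The only mildly delicate point — and the place I would be most careful — is the bookkeeping of the two conjugate points and the two connected components of $\mathcal N^*\Sigma$: the change of variables $z=\omega+\theta$ in \r{5} is $2$-to-$1$, and one must check that the resulting kernel \r{6}, which is written as a single function of $r=|x-y|$, genuinely encodes \emph{both} sheets of the canonical relation \r{Ap1}, i.e.\ that $F_+$ and $F_-$ each get the full amplitude $\sqrt2$ and not half of it. This is a matter of tracking that near a fixed point $y_0$ with $|x-y_0|=2$, the two preimages $(\omega,\theta)$ and $(\theta,\omega)$ contribute the factor $2$ already absorbed into the Jacobian computation $\sin|\alpha-\beta| = \frac r2\sqrt{4-r^2}$ (there is no extra factor of $2$ to distribute), and that the single conormal singularity $s_+^{-1/2}$ at $s=0$, once microlocalized, has wavefront set meeting both rays $\pm\xi$ over each base point — which is exactly the statement that $\widehat{s_+^{-1/2}}$ in \r{FTz} has both a $\zeta_+^{-1/2}$ and a $\zeta_-^{-1/2}$ part. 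Once that is spelled out, assembling \r{eq:thm9} with the stated properties is routine, and the ``mod $\Psi^{-\infty}$'' is just the statement that away from $\Delta\cup\Sigma$ the kernel \r{6} is smooth.
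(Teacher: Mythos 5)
Your argument is correct in substance, but it takes a genuinely different route from the paper. The paper proves Theorem~\ref{thm1} purely on the Fourier side: starting from the multiplier representation \r{2}, it squares the classical large-argument asymptotics of $J_0$ and reads off the three pieces explicitly as the Fourier multipliers \r{A0}, with the phases $(x-y)\cdot\xi\pm2|\xi|$ giving the canonical relations \r{9a}; in particular (c) is immediate from the explicit formulas, and one gets complete asymptotic expansions of the symbols for free. You instead work on the kernel side, taking \r{6} of Proposition~\ref{pr1} as the starting point and analyzing the two conormal singularities, at the diagonal (giving (a), consistently with \r{thm_PDO_eq1}) and at $\Sigma=\{|x-y|=2\}$ (giving (b) via \r{FTz} and the two components of \r{Ap1}), then deducing (c) from self-adjointness of $R^*R$ together with the fact that the two graphs in \r{9a} are mutually inverse and disjoint. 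This is essentially a rigorous version of the paragraph in the paper preceding Theorem~\ref{thm1}, and it has the virtue of tying the example directly to Theorems~\ref{thm_main} and \ref{thm_kernel}, i.e.\ it generalizes beyond the translation-invariant setting; what it loses relative to the Bessel computation is the explicit closed form of the symbols. Two small points to fix: the leading coefficient of the conormal singularity at $s=2-r=0$ is $4/\big((2-s)\sqrt{4-s}\big)\big|_{s=0}=1$, not $\sqrt2$ (immaterial, since only nonvanishing is needed for ellipticity); and the claim that self-adjointness forces $F_-=F_+^*$ \emph{exactly} overstates what your argument gives --- with your microlocal splitting it yields $F_-=F_+^*$ only modulo smoothing operators, since an operator associated to both disjoint Lagrangians must be smoothing; that is all the theorem asserts (it is stated mod $\Psi^{-\infty}$), whereas exact equality holds only for a particular choice of the splitting such as \r{A0}. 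Also, when you invoke the symbol formula for the $\Psi$DO part, the correct citation here is Theorem~\ref{thm_PDO} (the curves are magnetic, not Riemannian, geodesics), though the resulting formula is the same.
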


\begin{proof}
We start with the Fourier multiplier representation \r{2}. The leading term of $(2\pi)^2 J_0^2(|\xi|)$ is 
\be{exp}
\frac{8\pi}{|\xi|}\cos^2(|\xi-\pi/4) = \frac{8\pi}{|\xi|}(1+\sin(2|\xi|) 
= 2\pi\left( \frac2{|\xi|} +      \frac{e^{2\i|\xi|}  }{\i |\xi|} -  \frac{e^{-2\i|\xi|}  }{\i |\xi|} \right).
\ee
Those three terms are the principal parts of the operators in \r{eq:thm9}. The first one gives $4\pi |D|^{-1}$, while the second and the third one are FIOs with phase functions $\phi_\pm= (x-y)\cdot\xi \pm 2|\xi|$. A direct calculation show that the canonical relations of $F_\pm$ are given by \r{9a}, indeed. For the complete proof of the theorem, we need the  full asymptotic expansion of $J_0$. 

We recall the well known expansion of $J_0(z)$ for $z\to\infty$: 
\[
J_0(z) \sim \sqrt{2/(\pi z)}\left( P(z)\cos(z-\pi/4)-Q(z)\sin(z-\pi/4)   \right),
\]
where
\[
P(z) \sim \sum_{k=0}^\infty p_kz^{-2k}, \quad Q(z) \sim \sum_{k=0}^\infty q_kz^{-2k-1},
\]
with some (explicit) coefficients $p_k$, $q_k$. In particular, $p_1=1$, $q_1=-1/8$. 
Then
\[
\begin{split}
(2\pi)^2 J_0^2(z) &\sim \frac{2\pi}{z} \left( (P+\i Q)e^{\i (z-\pi/4)} + (P-\i Q) e^{-\i (z-\pi/4)} \right)^2\\
         &\sim  \frac{2\pi}{z} \left( -\i  (P+\i Q)^2e^{2\i z} + \i (P-\i Q)^2 e^{-2\i z}  + 2P^2+2Q^2  \right).
\end{split}
\]
We set
\be{A0}
A_0= 4\pi|D|^{-1}\left(P^2(|D|)+Q^2(|D|)\right), \quad 
F_\pm = \mp 2 \pi\i |D|^{-1} \big(P(|D|)\pm\i Q(|D|)   \big)^2e^{\pm 2\i |D| }.
\ee
This completes the proof. 
\end{proof}

We will now connect this to Theorem~\ref{thm_main}. Let $p_0=(0,0)$, $q_0=(2,0)$, $v_0=(0,\pi)$, $w_0= (0,\pi)$. Then $v_0\in S(p_0)$. Choose $\xi_0= (1,0)$, conormal to the conjugate locus $\Sigma(q_0) = \{|x-q_0|=2\}$ at $p_0$; and choose $\eta_0= (1,0)$, conormal to the conjugate locus $\Sigma(p_0) = \{|x-p_0|=2\}$ at $q_0$. The directions of $\xi_0$, $\eta_0$ reflects the choice of the orientation we made earlier. We refer to Figure~\ref{fig:caustics2}.

\begin{figure}[h] % float placement: (h)ere, page (t)op, page (b)ottom, other (p)age
  \centering
  \includegraphics[bb=0 0 454 401,width=3.49in,height=3.08in,keepaspectratio]{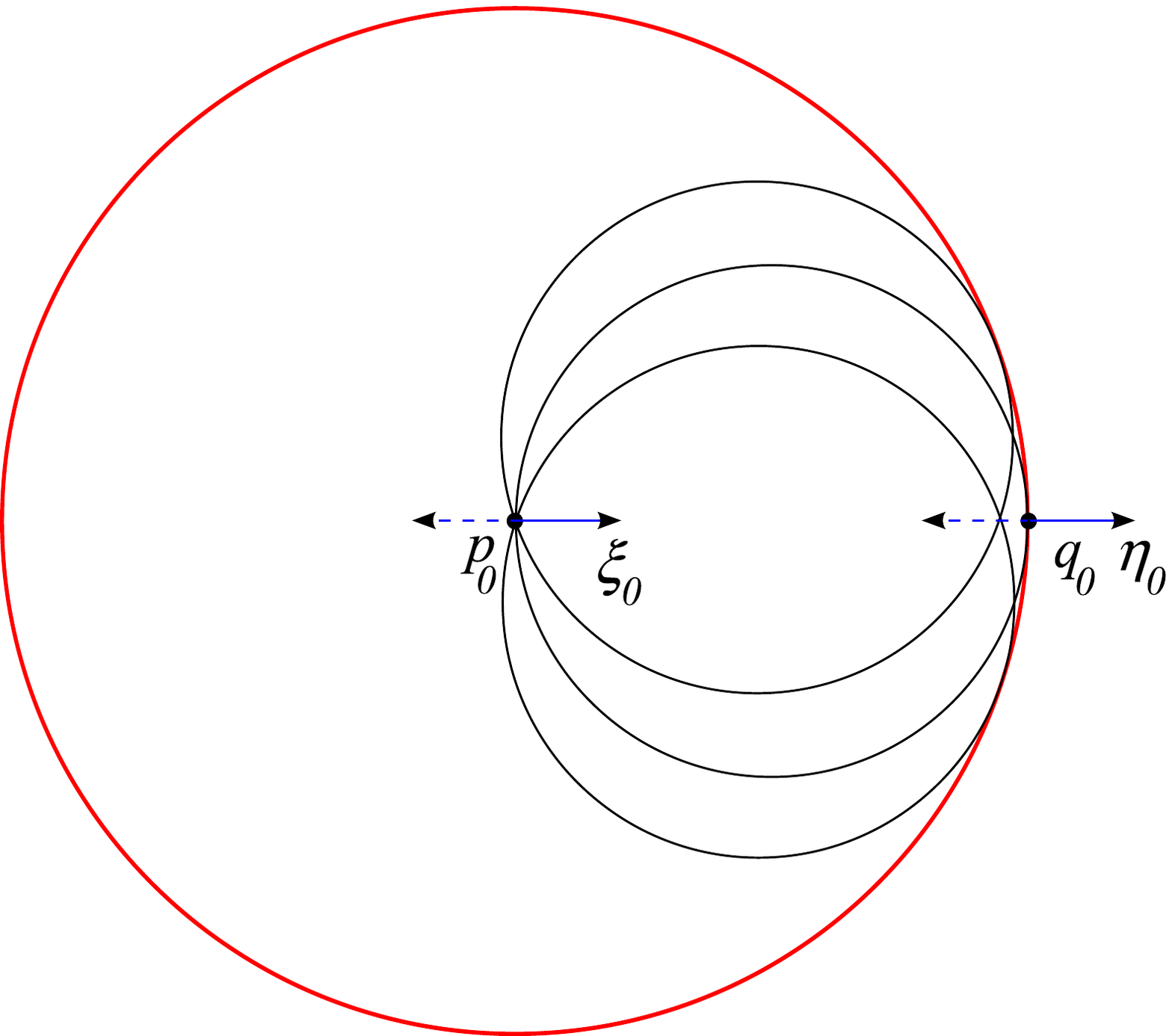}
  \caption{}
  \label{fig:caustics2}
\end{figure}

If we localize $R$ near $v=v_0$, then the pseudo-differential part of $R^*\chi R$ is $(1/2)A_0$, see \r{thm_PDO_eq1}. Therefore, in the notation of Theorem~\ref{thm_main}, 
\[
A=\frac12 A_0, \quad F= F_++F_-. 
\]
The canonical relation of $F_+$ maps $(p_0,\xi_0)$ into $(q_0,\eta_0)$, see  Figure~\ref{fig:caustics2}, while that of $F_-$ maps $(p_0,-\xi_0)$ into $(q_0,-\eta_0)$. This is consistent with the results in Theorem~\ref{thm_main}, where the Lagrangian has two disconnected components located   near $(p_0,q_0, \pm\xi_0,\mp \eta_0)$.

To analyze the operator \r{main_question}, note first that $A_1=A_2=A_0/2$. Let us first analyze this operator applied to distributions with wave front set near $(q_0,\eta_0)$ but not near $(q_0,-\eta_0)$. Then $F$ reduces to $F_+$ only, and we have, modulo $\Psi^{-\infty}$, 
\[
A_2^{-1}F^*A_1^{-1}F = \frac14 A^{-2}F_+^*F_+=\Id,
\]
see \r{A0}. The analysis near $(q_0,-\eta_0)$ is similar. 
Therefore, we have a stronger version of Theorem~\ref{thm_cancel} in this case: singularities can cancel to any order.

\begin{theorem}\label{thm_circles}
Let $f_1$ be any distribution with $\WF(f_1)$ supported in a small conic neighborhood of some $(x_0,\xi^0)\in T^*\R^2\setminus0$. Then there exists a distribution $f_2$ with $\WF(f_2)$ supported in a small conic neighborhood of $(x_0\pm 2\xi^0/|\xi^0|,\xi^0)$, that is an image of $\WF(f_1)$ under the map $\mathcal{F}_\pm$, so that $R(f_1+f_2)\in C^\infty$ for all unit circles in a neighborhood of the unit circle $C(x_0\pm \xi^0)$. 
\end{theorem}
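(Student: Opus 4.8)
The plan is to exploit the explicit Fourier-multiplier representation from Theorem~\ref{thm1}, since for the circular transform everything is diagonalized by the Fourier transform and we can afford to work with exact operators rather than just principal symbols. First I would reduce to the ``$+$'' case, say $\WF(f_1)$ near $(x_0,\xi^0)$ and we aim at $(x_0+2\xi^0/|\xi^0|,\xi^0)$; the $-$ case is identical after replacing $\xi^0$ by $-\xi^0$ (equivalently, using $F_-$). Let $\chi\in\Psi^0$ be a cutoff that is microlocally the identity near $(x_0,\xi^0)$ and essentially supported in a small conic neighborhood of it, chosen narrow enough that the canonical map $\mathcal{F}_+$ from \r{9a} carries its essential support into a small conic neighborhood of $(x_0+2\xi^0/|\xi^0|,\xi^0)$, and so that both this neighborhood and the original one avoid the antipodal directions $-\xi^0$, $-(x_0+2\xi^0/|\xi^0|,\xi^0)$. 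Replacing $f_1$ by $\chi f_1$ (the difference is smooth), I may assume $\WF(f_1)$ is as narrow as I like.

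Next I would define $f_2 := -A_0^{-1}F_+ f_1$, where $A_0^{-1}$ is a parametrix for $A_0=4\pi|D|^{-1}(P^2+Q^2)$ constructed microlocally on the conic neighborhood of $(x_0+2\xi^0/|\xi^0|,\xi^0)$ where $P^2+Q^2$ is bounded below (this holds for $|\xi|$ large, and the low-frequency part only contributes smooth errors). Since $F_+$ is an elliptic FIO of order $-1$ with the graph canonical relation $\mathcal{F}_+$, its image $F_+f_1$, and hence $f_2$, has wave front set in the prescribed small conic neighborhood of $(x_0+2\xi^0/|\xi^0|,\xi^0)$; and $f_2$ is a distribution of the appropriate Sobolev regularity. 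With $f:=f_1+f_2$, decompose $Rf$ using the fact that $R$, like $R^*R$, is built from the Fourier multipliers of \r{2}; concretely it suffices to show $R^*R f\in C^\infty$ near $(x_0\pm\xi^0)$ and then invoke the mapping properties of $X^*=R^*$ (Proposition~\ref{pr_map}, Remark~\ref{remark_global}) to transfer smoothness of $R^*Rf$ back to smoothness of $Rf$ along the relevant circles, exactly as in the proof of Theorem~\ref{thm_cancel}. So the heart of the matter is $R^*Rf\in C^\infty$ microlocally near $(x_0,\xi^0)$ and its image.

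For that, I would use the matrix picture \r{Fmatrix}: localizing by $\chi_1$ near $(x_0,\xi^0)$ and $\chi_2$ near $(x_0+2\xi^0/|\xi^0|,\xi^0)$, the relevant blocks are $A_1=A_2=A_0/2$ (the $\Psi$DO part, see \r{thm_PDO_eq1}, noting the factor $1/2$ because only one of the two rays is localized), $F=F_+$, $F^*=F_-$. Then $\chi_1 R^*Rf = A_1 f_1 + F_+ f_2 = \frac12 A_0 f_1 - \frac12 A_0 A_0^{-1}F_+ f_1 \equiv 0$ modulo $C^\infty$, directly from the definition of $f_2$. For $\chi_2 R^*Rf = A_2 f_2 + F_- f_1 = -\frac14 A_0^{-1}\,\text{(stuff)}$; here I use that on the cone in question $F_-F_+ = F_+^*F_+$ and, since $F_\pm$ are Fourier multipliers by \r{A0}, the product $A_2^{-1}F_-A_1^{-1}F_+$ is \emph{exactly} the identity modulo $\Psi^{-\infty}$ (the exact cancellation is the special feature of the constant-curvature circle case, and is precisely the stronger statement noted in the paragraph before this theorem). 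Hence $A_2 f_2 + F_- f_1 = -F_-A_1^{-1}F_+ f_1 + F_- f_1 = \big(\Id - F_-A_1^{-1}F_+\big)F_-^{-1}\cdot(\dots)$, better: write it as $F_-f_1 - F_- A_1^{-1}F_+ f_1 = F_-\big(\Id - A_1^{-1}F_+\big)f_1$, and since $A_1^{-1}F_+$ composed appropriately gives, after multiplying by $A_2^{-1}F_-$, the identity mod smoothing, the whole expression is smoothing applied to $f_1$. The main obstacle, and the place to be careful, is the bookkeeping of microlocal cutoffs and parametrices away from the antipodal cone: one must check that $A_0$ is genuinely elliptic (hence invertible mod $\Psi^{-\infty}$) on the narrow cones involved — which is true because $P^2+Q^2\to p_0^2>0$ — and that the exact multiplier identity $A_0^{-2}F_-F_+\equiv\Id$ holds mod $\Psi^{-\infty}$ on those cones, which follows by inspecting \r{A0}: $F_-F_+ = 4\pi^2|D|^{-2}(P^2+Q^2)^2 = (A_0)^2/(4\pi^2)\cdot\pi^2 $, up to the precise constant, so that $A_0^{-1}F_\pm$ is unitary mod smoothing. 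Once this is in hand, $R^*Rf\in C^\infty$ near the two points, and the transfer to $Rf\in C^\infty$ along nearby circles finishes the proof.
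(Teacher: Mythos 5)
Your route is the paper's route (the multiplier decomposition of Theorem~\ref{thm1} plus the cancellation scheme from the proof of Theorem~\ref{thm_cancel}, then transferring back to $R$), but the construction of $f_2$ is wrong in two concrete ways that destroy the cancellation. First, the direction: with the explicit formula \r{A0}, $F_+=-2\pi\i|D|^{-1}(P+\i Q)^2e^{2\i|D|}$, and the kernel of $e^{2\i|D|}$ is conormal to $\{|x-y|=2\}$ with $\WF'$ relation $y=x+2\xi/|\xi|$; hence $F_+u$ is singular at $(x,\xi)$ precisely when $u$ is singular at $(x+2\xi/|\xi|,\xi)$. In other words, \r{9a} is the phase-space map from the \emph{output} covector to the \emph{input} covector, so $F_+$ is the block carrying data near $(q_0,\eta_0)$ to output near $(p_0,\xi_0)$, and $F_+f_1$ has wave front set near $(x_0-2\xi^0/|\xi^0|,\xi^0)$, not near $(x_0+2\xi^0/|\xi^0|,\xi^0)$ as you claim; the operator you want is $F_-=F_+^*$ (or a parametrix of $F_+$). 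Second, and decisively, the constant: the parametrix must be that of $A_1=A_2=\tfrac12A_0$, i.e.\ one must apply $2A_0^{-1}$, exactly as stated in the paper right after the theorem (``we apply $2A_0^{-1}$ to the result''). The identity that makes everything work is $4A_0^{-2}F_+^*F_+=\Id$ mod smoothing, because $F_+^*F_+=\tfrac14A_0^2$; thus $2A_0^{-1}F_\pm$, not $A_0^{-1}F_\pm$, is unitary mod smoothing. With your $f_2=-A_0^{-1}F_\mp f_1$ one gets at the normal-operator level $A_2f_2+F_\mp f_1=\tfrac12F_\mp f_1$, and at the level of $R$ itself (splitting $R=R_++R_-$ via the $J_0$ asymptotics, with multipliers $\sqrt{2\pi/|\xi|}\,(P\pm\i Q)e^{\pm\i(|\xi|-\pi/4)}$) one computes $R_+f_2=-\tfrac12R_-f_1$, so $R(f_1+f_2)$ retains exactly half of the conormal singularity at the circles centered near $x_0+\xi^0/|\xi^0|$: nothing cancels. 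Relatedly, your verification line ``$A_1f_1+F_+f_2=\tfrac12A_0f_1-\tfrac12A_0A_0^{-1}F_+f_1\equiv0$'' is not a valid substitution: with your definition, $F_+f_2=-F_+A_0^{-1}F_+f_1$, an FIO translating by $4\xi/|\xi|$ applied to $f_1$, not $-\tfrac12F_+f_1$; and the block acting on $f_2$ in the equation localized at $x_0$ is the adjoint one.

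A secondary point: deducing $Rf\in C^\infty$ near the circles from $R^*Rf\in C^\infty$ microlocally near $x_0$ and $x_0+2\xi^0/|\xi^0|$ is not automatic. Since $R^*=R$, singularities of $g=Rf$ located over two different centers (e.g.\ $x_0+\xi^0/|\xi^0|$ and $x_0-\xi^0/|\xi^0|$) could themselves cancel under the second application of $R$, so smoothness of $Rg$ at the two points does not by itself localize $\WF(Rf)$. Either run the duality argument of Theorem~\ref{thm_cancel} at every order (possible, but it should be said), or, cleaner in this explicit setting, work with $R=R_++R_-$ directly and verify $R_-f_1+R_+f_2\equiv 0$ mod $C^\infty$ near the relevant centers; this direct computation is also what pins down the factor $2$. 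With the corrected choice $f_2:=-2A_0^{-1}F_-\,f_1$ (microlocalized near $(x_0,\xi^0)$), both the identity $A_1^{-1}F_+A_2^{-1}F_-=\Id$ mod $\Psi^{-\infty}$ and the exact relation $R_+f_2=-R_-f_1$ mod smoothing hold, and the theorem follows.
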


In other words, for a fixed circle $C_0$ of radius $1$,  there is a rich set of distributions $f$, with any order of singularity at $\mathcal{N}^*C_0$, so that those singularities are invisible by $X$ localized near $C_0$, i.e., $Xf\in C^\infty$.  Explicit examples can be constructed by choosing $f_2(x)=\delta(x-q_0)$, then $Ff_2$ near $p_0$ is just given by the Schwartz kernel of $R^*R$, see \r{6}. To obtain $f_1$, we apply $2A_0^{-1}$ to the result. 

We would like to emphasize on the fact that the theorem provides an example of cancellation of singularities for the localized transform only. As we will see below, $Rf\in C^\infty$ (globally) for $f\in \mathcal{E}'$ implies $f\in C^\infty$. On the other hand, without the compact support assumption, on can construct singular distributions in the  kernel of $R$, using the Fourier transform.  

\subsubsection{The wave front set of a distribution in $\Ker R$}
Now, if $Rf=0$ or more generally, if $Rf\in C^\infty$, one easily gets that 
\be{10}
\mbox{ $\forall f\in \Ker R$, $\WF(f)$ is invariant under the action of the group $\{\mathcal{F}_+^m, \; m\in\mathbf{Z}\}$.  }
\ee
Then, if $f$ is compactly supported (or more generally, smooth outside some compact set), we get that $\WF(f)$ must be empty, i.e., $f\in C^\infty(\R^2)$. In other words, even though recovery of $\WF(f)$ is impossible by knowing $Xf$ locally, as we saw above; the condition  $Xf\in C^\infty$ globally, together with the compact support assumption yielded a global recovery of singularities. Here an important role is played by the fact that $X$ is translation invariant, and in particular, our assumptions are valid for any $(p_0,\theta_0)\in TS\R^2$ that cannot be guaranteed in the general case. Also, the dynamics is not time reversible; therefore for each $(x_0,\xi^0)\in T^*M\setminus 0$ there are two different curves through $x_0$ in our family. The latter is true for general magnetic systems with a non-zero magnetic field, see 
\cite{St-magnetic}.

\begin{remark}
One can see that $R$ is invertible on $L^2(M)$ by using Fourier transform, see \r{2}.  The formal inverse is $1/J_0(|\xi|)$, and conjugating  a compactly supported $\chi$ with the Fourier transform, one gets a convolution in the $\xi$ variable that will smoothen out the zeros of $J_0(|\xi|)$, thus producing a Fourier multiplier with asymptotic  $\sim |\xi|^{1/2}$. In $L^p(\R^2)$ with $p>4$ however it is not invertible, and elements of the kernel include functions with Fourier transforms supported on the circles $J_0(|\xi|)=0$, see also \cite{Thang, AKuchment2009}. 
\end{remark}

Finally, we remark that in this case, one can study $R$ directly, instead of $R^*R=R^2$, with the same methods. Our goal however is to connect the analysis of this transform with our general results. 

\subsection{The X-ray transform on the sphere} Consider the geodesic ray transform on the sphere $S^{n}$. \label{sec_sphere}
The conjugate points are not of fold type, instead they are of blow-down type. Let $J$ be the antipodal map. 

Without going into details, %based on symmetry arguments, it seems that for that transform, we get something like this
we will just mention that then \r{dec} still holds with 
\[
CN = |D|^{-1}-|D|^{-1}J,
\]
with some constant $C$, where the canonical relation of $F$ is the graph of the antipodal map, lifted to $T^*S^2$. Then $CN|D|= \Id-J$. 
The canonical graph is an involution, however (its square is identity), so arguments similar to that in the previous example do not apply. That means that singularities may cancel. In fact, it is known that $R$ has an infinitely dimensional kernel --- all odd functions with respect to $J$. This is a case opposite to the one above. 

In this case $\Sigma$ consists of all antipodal pairs $(x,y)$, and has dimension 2 (and codimension 2), unlike the case above (dimension 3 and codimension 1). On the other hand, $\mathcal{N}^*\Sigma$ still has the same dimension (that is 2n=4, and this is always the case as long as $\Sigma$ is smooth submanifold). One can see that the Lagrangian in this case is still $\mathcal{N}^*\Sigma$.

\subsection{Magnetic geodesics in $\R^3$} \label{sec_mag}
 Consider the magnetic geodesic system in the Euclidean space $\R^3$ with a constant magnetic potential $(0,0,\alpha)$, $\alpha>0$. The geodesic equation is then given by
\be{m1}
\ddot \gamma = \dot\gamma  \times (0,0,\alpha),
\ee
where $\times$ denotes the vector product in $\R^3$. The r.h.s.\ above is the Lorentz force that is always normal to the trajectory and in particular does not affect the speed. We restrict the trajectories on the energy level $1$ that is preserved under the flow. Then we get
\[
\ddot \gamma^1 = \alpha \dot\gamma^2, \quad \ddot\gamma^2= -\alpha\dot\gamma^1, \quad \ddot\gamma^3=0. 
\]
%In cylindrical coordinates $(r,\theta,z)$, 
The magnetic geodesics are then given by
\[
\gamma(t) = \gamma(0)+\left(\frac{r}\alpha(\sin(\alpha t+\theta)-\sin\theta) , \frac{r}\alpha(-\cos(\alpha t+\theta)+\cos\theta)  , tz      \right),
\]
where $(r,\theta,z)$ are the cylindrical coordinates of $\dot\gamma(0)$. The unit speed requirement means that
\[
r^2+z^2=1.
\]
The geodesics are then spirals; when $z=0$ then they reduce to closed circles, and when $r=0$ they are vertical lines. 

The parameterization by cylindrical coordinates is singular when $r=0$. Away from that we can use $\theta$, $z$ to parametrize unit speeds. Then in $\exp_p(v)$, we use the coordinates $(t,\theta,z)$ to parametrize $v$, i.e., 
\[
v=t\Big(\sqrt{1-z^2}(\cos\theta,\sin\theta),z\Big).
\]
At $t=0$ we may have additional singularity but this is irrelevant for our analysis since we know that the exponential map has an injective differential near $v=0$. 
An easy computation yields that the conjugate locus is given by the condition $\alpha t=\pi$, i.e.,
\[
S_p(v) = \left\{v;\; |v|=\pi/\alpha\right\},
\]
and this is true for any $p\in \R^3$. This is a sphere in $T\R^n$. 
For $\Sigma_p$ we then get
\be{qg}
\gamma(\pi/\alpha) = p + \alpha^{-1}(-2r\sin\theta,2r\cos\theta,\pi z)
\ee
with $p=\gamma(0)$. This shows that $\Sigma(p)$ is an ellipsoid 
%(with the north and the south pole removed, as we will see below
\[
\Sigma = \left\{(p,q); \;  \frac{1}{4}(q_1-p_1)^2 + \frac{1}{4}(q_2-p_2)^2+ \frac{1}{\pi^2}(q_3-p_3)^2 = \alpha^{-2}\right\}.
\]
Then 
\be{NS1}
\mathcal{N}^*\Sigma = \left\{(p,q,\xi,\eta); \;  (p,q)\in \Sigma; \; \xi =c\Big(p_1-q_1,p_2-q_2, \frac4{\pi^2}(p_3-q_3)\Big), \; \eta = -\xi, \; 0\not=c\in\R\right\}. 
\ee
Therefore, given $p$, $\xi$, we can immediately get $q$ as a smooth function of $(p,\xi)$, and we can obtain $v$ so that $\exp_p(v)=q$ by \r{qg}, where the l.h.s.\ is $q$. Therefore, $(p,\xi)\mapsto v$ is a smooth map, and therefore $(p,\xi)\mapsto (q,\eta)$ is a smooth map, too. The later also directly follows from \r{NS1}, since $\eta=-\xi$. 

We therefore get that $F$ is an FIO of order $-3/2$ with a canonical relation
\be{q3}
(p,\xi) \mapsto (q,\xi), 
\ee
where $q$ can be determined as described above. A geometric description of $q$ is the following: $q$ is one of the two points on the ellipsoid $\Sigma$, where the normal is given by $\xi$.  The choice of one out of the two points is determined by the choice of the initial velocity $v_0$ near which we localize; changing $v_0$ to $-v_0$ would alter that choice. Since \r{q3} is a diffeomorphism, $F$ is of canonical graph type, and therefore maps $H^s$ to $H^{s+3/2}$. In contrast, $A_{1,2}$ are elliptic of order $-1$, thus they dominate over $F$. By Corollary~\ref{cor_sing}, $X$ can be inverted microlocally in the setup described in Section~\ref{sec_2}.

\subsection{Fold caustics on product manifolds} \label{sec_prod}
Let 
$(M,g) = (M',g')\times (M'',g'')$ be a product of two Riemannian manifolds. The geodesics on $M$ then have the form
\[
\gamma_{p,v}(t) = (\gamma'_{p',v'}(t),\gamma''_{p'',v''}(t)).
\]
Consequently, 
\[
\exp_p(v) = (\exp'_{p'}(v'),\exp''_{p''}(v'')).
\]
Assume that in $(M',g')$, $v_0'$ is conjugate at $p_0$ of fold type, and assume that $v_0''$ is not conjugate at $p_0''$ in $(M'',g'')$. Then 
\[
\d \exp_p(v) = \mbox{diag} (\d \exp'_{p'}(v'),\d\exp_{p''}(v'')).
\]
The kernel of $\d\exp_p(v)$ then consists of $N_p(v)=N_{p'}(v')\times 0$. Next, $S(p) = S(p')\times T_{p''}M''$, and $\Sigma(p) =\Sigma'(p')\times M''$.  Then $N_p(v_0)$ is transversal to $S(p)$ at $v=v_0$, therefore $(v',v'')$ is a fold conjugate vector for $v'\in S'(p)$ close to $v_0$ and for any $v''$. Then the left projection $\pi_{\rm L}$ of the Lagrangian $\mathcal{N}^*\Sigma$ consists of $(p,\xi)$ with $(p',\xi')\in \pi_{\rm L}(\Sigma')$ and $\xi''=0$. Thus the rank drops at least by $n''=\dim(M'')$. We get the same conclusion for $\pi_{\rm R}(\mathcal{N}^*\Sigma)$. Therefore, $\mathcal{N}^*\Sigma$ is not a canonical graph in this case. 

Let $n'=\dim(M')=2$. Then the canonical relation in  $(M',g')$ is a canonical graph, and we get that  $\pi_{\rm L,R}(\mathcal{N}^*\Sigma)$ have rank $2n'+n''=4+n''$ instead of the maximal possible $2n=4+2n''$; i.e., the loss is exactly $n''$.

Assume now that $n'=2$, $n''=1$, and the metric in $M$ is given by 
\[
\sum_{\alpha,\beta=1}^2g_{\alpha\beta}(x^1,x^2)\d x^\alpha \d x^\beta + (\d x^3)^2.
\]
Assume also that in $M'$, we have a fold conjugate vector $v_0=(0,1)$ at $x^1=x^2=0$. Then all possible conormals to the conjugate loci at $(0,0)$ corresponding to small perturbations of $v_0$ will lie in the plane $v^3=0$. This is an example where Corollary~\ref{cor_2} can be applied. We can recover singularities of the kind $\xi = (\xi_1,\xi_2,\xi_3)$ at $p_0=(0,0,0)$ with $\xi_3\not=0$ and $(\xi_1,\xi_2)$ in a conic neighborhood of  $(1,0)$. The ones with $\xi_3=0$ are the problematic ones.

\end{document}